\def\widecap{\mathpalette\wide@cap}
\def\wide@cap#1#2{\!\sbox\z@{$#1#2$}
     \mathop{\vbox{\m@th\ialign{##\crcr
\kern0.08em\capfill#1{0.8\wd\z@}\crcr\noalign{\nointerlineskip}
                    $\hss#1#2\hss$\crcr}}}\limits\kern-0.25em}
\def\capfill#1#2{$\m@th\sbox\tw@{$#1($}
  \hss\resizebox{#2}{\wd\tw@}{\rotatebox[origin=c]{90}{\upshape)}}\hss$}
\allowdisplaybreaks \setlength{\textwidth}{6.5in}
\numberwithin{equation}{section}
\newtheorem{theorem}{Theorem}[section]
\newtheorem{corollary}[theorem]{Corollary}
\newtheorem{proposition}[theorem]{Proposition}
\newtheorem{lemma}[theorem]{Lemma}
\theoremstyle{definition}
\newtheorem{assum}[theorem]{Assumption}
\newtheorem{example}[theorem]{Example}
\newtheorem{open}[theorem]{Open Problem}
\theoremstyle{remark}
\newtheorem{remark}[theorem]{Remark}
\newcommand{\floor}[1]{\lfloor #1\rfloor}
\newcommand{\bb}{\mathbb}
\renewcommand{\l}{\left}
\renewcommand{\r}{\right}
\newcommand{\ang}[1]{\langle #1 \rangle}
\newcommand{\ntfloor}{{\lfloor nt \rfloor}}
\newcommand{\hp}{\widehat{p}}
\newcommand{\oeta}{\overline{\eta}}
\newcommand{\mT}{\boldsymbol{T}}
\newcommand{\omT}{\overline{\mT}}
\newcommand{\tmT}{\widetilde{\mT}}
\newcommand{\mW}{\boldsymbol{W}}
\newcommand{\omW}{\overline{\mW}}
\newcommand{\tmW}{\widetilde{\mW}}
\newcommand{\mQ}{\boldsymbol{Q}}
\newcommand{\omQ}{\overline{\mQ}}
\newcommand{\tmQ}{\widetilde{\mQ}}
\DeclareMathOperator{\tr}{trace}
\DeclareMathOperator{\diag}{diag}
\DeclareMathOperator{\Oh}{O}
\DeclareMathOperator{\oh}{o}
\begin{document}
\bibliographystyle{plainnat}

\title[Step reinforced Walk with regularly varying memory]{Limit Theorems for step reinforced random walks\\ with regularly varying memory}
\date{}
\author[A. Majumdar]{Aritra Majumdar}
\address{Aritra Majumdar, \ Theoretical Statistics and Mathematics Unit, \ Indian Statistical Institute, \ Kolkata, India.}
\email{aritram425@gmail.com.}
\author[K. Maulik]{Krishanu Maulik }
\address{Krishanu Maulik, \ Theoretical Statistics and Mathematics Unit, \ Indian Statistical Institute, \ Kolkata, India.}
\email{krishanu@isical.ac.in.}


\begin{abstract}
We study and prove limit theorems for a class of generalized step reinforced random walks. At every step, the walker chooses a step from the past with probability proportional to a given regularly varying sequence, called the memory sequence. Then it either repeats the chosen step with probability $p$ or uses an innovation with probability $1-p$. We provide functional law of large numbers for the linearly scaled process, viewed at a linearly scaled time. The convergence is almost sure and in $L^1$ under finite mean assumption of the innovation steps. A stronger finite variance assumption gives us $L^2$ convergence.

Under finite variance assumption, the suitably scaled walk exhibits a novel phase transition based on the boundedness of a sequence related to the memory sequence. For the subcritical regime, the scaling is diffusive, while it is superdiffusive otherwise.

The most interesting contribution of the paper is in the critical regime. We show that the process convergence of the scaled walk, viewed in the linear time scale, can be either in distribution or almost sure, depending on the choice of the memory sequence. We argue that the exponential time scale for the critical regime, traditionally used in the literature, is not natural and we obtain the asymptotic behavior under the linear time scale. In addition, we provide novel scalings other than $\sqrt{n \log n}$ in the critical regime. We also raise some open problems.
\end{abstract}
\subjclass[2020]{Primary 60K50; Secondary 60F05, 60F15, 60F17, 60G42, 62L20, 82B26.}

\keywords{step reinforced random walk, elephant random walk, diffusive, superdiffusive, law of large numbers, functional central limit theorem, $L^2$ convergence, regular variation, Karamata's theorem, r.c.l.l.\ functions, Skorohod topology.}
{
\maketitle
}


\section{Introduction}
Random walks with long memory have been a subject of great interest among physicists, often serving as useful models for analyzing processes exhibiting traits of anomalous diffusion. One of the simplest and analytically tractable model in this regard is the Elephant Random Walk (ERW), introduced by \citet{schutz2004}. We now describe the dynamics of the walk. The elephant, starting at the origin, takes a unit step $X_1$ to the right or the left with equal probabilities. Then for every $n\geq 1$, the increment at epoch $(n+1)$ is given by
\begin{equation}{\label{ERW definition}}
    X_{n+1}=\begin{cases}
        X_{\beta_{n+1}},& \text{with probability } p,\\
        -X_{\beta_{n+1}},& \text{with probability } 1-p,
    \end{cases}
\end{equation}
where $\beta_{n+1}$ is uniformly chosen over the set $\{1,2,\cdots,n\}$. We call the sequence of random variables $\{\beta_n\}$ as the \textit{memory variables}. Then $S_n=\sum_{k=1}^n X_k$ is called the ERW, with \textit{recollection probability}~$p$. It is assumed that the various random choices encountered in defining the process are independent of each other. \citet{Kursten2016} (see also \cite{Kim2014}) studied a variant of the ERW with $X_1=\xi_1$ and the $(n+1)^{\text{th}}$ increment is given by
\begin{equation}{\label{Alternate formulation}}
    X_{n+1}=\begin{cases}
        X_{\beta_{n+1}},& \text{with probability } p,\\
        \xi_{n+1},& \text{with probability } 1-p,
    \end{cases}
\end{equation}
where $\{\xi_n\}_{n\geq 1}$ is a sequence of independent and identically distributed Rademacher random variables with parameter ${1}/{2}$. The corresponding process $\{S_n\}_{n\geq 0}$ is called the Step Reinforced Random Walk (SRRW) with \textit{innovation} sequence $\{\xi_n\}_{n\geq 1}$ and recollection probability $p$. The SRRW with recollection probability $p$ is equivalent to an ERW with recollection probability $(p+1)/2$. The random walk evolving according to \eqref{Alternate formulation}, provides a natural framework to incorporate more general steps. Indeed, a substantial amount of work has been done on SRRWs under various assumptions about the innovation sequence, although with uniform memory (see, \cite{Bertoin2021, Bertoin2024, Businger2018} and the references therein). However, see \cite{Baur2020} for an example of a class of random walks with preferential attachment type memory.

We consider a generalization of the SRRWs with an independent and identically distributed mean zero innovation sequence $\{\xi_n\}_{n\geq 1}$. We shall make further moment assumptions on the innovation sequence as required for the results. The sequence of memory random variables $\{\beta_n\}_{n\geq 2}$ satisfies
$\mathbb{P}(\beta_n=k)\propto \mu_k$, for $k=1, \ldots, n-1$,
where $\{\mu_n\}_{n\geq 1}$ is a regularly varying sequence of index $\gamma>-1$, and is called the \textit{memory sequence}. The corresponding random walk $\{S_n\}_{n\geq 0}$ will be called the Step Reinforced Random Walk Regularly Varying Memory (SRRW-RVM). Such a model has recently been introduced in \cite{bertenghi2024universal}, where the innovation sequence is assumed to be of finite variance. \citet{Laulin2022} also considered a similar model in the framework of~\eqref{ERW definition}, for a particular choice of regularly varying sequence
\begin{equation}{\label{continued prod memory}}
    \mu_n=\prod_{i=1}^{n-1}\left(1+\frac{\gamma}{i}\right),\quad \text{for } n\geq 1.
\end{equation}
Similar models have been studied in the higher dimensions by \citet{Chen2023}, assuming \eqref{continued prod memory}. \citet{Roy2024} studied a lazy (unidirectional) version with Bernoulli increments, also for the special choice of the memory sequence~\eqref{continued prod memory} and proved interesting results using martingale methods and through coupling with appropriate multitype branching processes. In~\cite{Roy2024}, the recollection probability depended on the step chosen from the past. \citet{bertenghi2024universal} established law of large numbers and functional central limit theorem for certain values of $p\in[0,1]$ and $\gamma\geq 0$, namely, for ${\gamma}/{(\gamma+1)} < p < {(\gamma+1/2)}{(\gamma+1)}$, where the walk is diffusive. 

\subsection{Contribution of the present work} Here, we highlight the significant contributions of the article. The critical regime deserves a special mention.
\subsubsection{Analysis for all values of recollection probability and memory sequences}
A detailed analysis of the SRRW-RVM model has been carried out for all values of the recollection probability $p\in[0,1]$ and all possible choices of the regularly varying memory sequence $\{\mu_n\}$. All the limit results are in terms of the convergence of the scaled process in the space of r.c.l.l.\ functions. As a result, further limits of continuous functionals can be obtained easily.
\subsubsection{Phase transition boundary}
Beyond the complete analysis of the SRRW-RVM model, this article provides a better understanding on the phase transition boundary.
For innovations with finite second moment, we obtain a phase transition based on the square summability of the sequence $\{a_n \mu_n\}_{n\ge 1}$, where the sequence $\{a_n\}$, depending on both the recollection probability $p$ and the memory sequence $\{\mu_n\}$, is defined in~\eqref{contd prod}. The summability of the sequence $\{a_n^2 \mu_n^2\}$ is equivalent to the sequence $\{v_n\}$, defined through~\eqref{eq: v}, being bounded. The sequence $\{a_n\}$ is regularly varying of index $-p(\gamma+1)$ -- see Theorem~4 of~\cite{Bojanic1973}.
This phase transition introduces the point of criticality at $p=p_c:={(\gamma+1/2)}/{(\gamma+1)}$. For the supercritical regime $p> p_c$, the sequence $\{a_n^2 \mu_n^2\}$ is summable, equivalently, the sequence $\{v_n\}$ is bounded; while the sequence $\{v_n\}$ is unbounded and $\sum_n a_n^2\mu_n^2=\infty$ for the subcritical regime $p<p_c$. For the critical regime $p=p_c$, the boundedness of $\{v_n\}$ depends on the sequence $\{\mu_n\}$. For unbounded $\{v_n\}$, the suitably scaled process converges in distribution to a centered Gaussian process with continuous paths. See Theorems~\ref{Invariance principle} and~\ref{Supercritical weak convergence}. On the other hand, it converges almost surely to a process, (which may not be Gaussian, depending on the choice of the distribution of the innovation variables), again with continuous paths, when $\{v_n\}$ is bounded. See Theorem~\ref{Superdiffusive process convergence}. The phase transition dichotomy based on the summability of the sequence $\{a_n^2 \mu_n^2\}$ or the boundedness of the sequence $\{v_n\}$ is novel.
\subsubsection{Almost sure convergence in the critical regime}
The most noteworthy contribution of this article is the analysis of the critical regime. The sequence $\{v_n\}$ can either be bounded or unbounded, depending on $\{\mu_n\}$. The cases of bounded $\{v_n\}$ under the critical regime give almost sure limits. The existence of almost sure limits in the critical case is novel in the literature. See also Remark~\ref{rem: concl}.
\subsubsection{Space scaling in the critical regime}
The cases in the critical regime with unbounded $\{v_n\}$ are more intriguing. Here, the scaling for process convergence is given by a sequence $\sigma_n = {v_n}/(a_n \mu_n)$ -- see also~\eqref{eq: ell} for the definition. The sequence $\{\sigma_n\}$ is regularly varying of index $-1/2$. The scaling is time independent in case of the functional limit as well. However, in case of the traditional SRRW or ERW (see~\cite{Bertenghi2022} or~\cite{Laulin2022}), usually a time dependent scaling $\sqrt{n^t \log n}$ is used for the process weak convergence. It may also be noted that a linear time scaling and time independent space scaling $\sqrt{n \log n}$ was obtained for the same in~\cite{Hu2024}. Corollary~\ref{corr: nlogn scale} and Examples~\ref{ex: zeta alpha zero pos}~-~\ref{ex: log log pos} identify memory sequences $\{\mu_n\}$, with the scale given by $\sqrt{n \log n}$. However, in Section~\ref{sec: examples} we also provide memory sequences with scale $\sigma_n$, which can be of larger (see Examples~\ref{ex: zeta zero},~\ref{ex: zeta power zero},~\ref{ex: log log zero} and~\ref{ex: log log log zero}) or smaller (see Example~\ref{ex: lighter than n log n}) order compared to $\sqrt{n \log n}$. Corollary~\ref{corr: superdiffusive a.s. conv eg 1} and Examples~\ref{ex: zeta neg}~-~\ref{ex: log log neg} provide a wide class of memory sequences $\{\mu_n\}$ giving almost sure (also $L^2$) limit even under the critical regime. The space scaling turns out to be heavier than the traditional choice $\sqrt{n \log n}$; see Remark~\ref{rem: crit ae wt}. Such examples are completely new in the literature to the best of our knowledge and show the extreme richness of the model under consideration. 
\subsubsection{Linear time scaling in the critical regime}
It is also worth mentioning that, under the critical regime $p=p_c$ with unbounded $\{v_n\}$, we use a linear time scale $\ntfloor$ for the process convergence and the weak process limit is a centered Gaussian multiple of the square root function -- see Theorem~\ref{Supercritical weak convergence}. As noted above, in case of the traditional SRRW or ERW, an exponential time scale $\floor{n^t}$ was used in~\cite{Bertenghi2022, Laulin2022} to obtain Brownian motion limit, while~\cite{Hu2024} used linear time scale $\floor{nt}$ with a Gaussian multiple of the square root function as the limit (see~(1.13) in Remark~1.1 of~\cite{Hu2024}). In Example~\ref{ex: zeta zero}, we show that the scaled SRRW-RVM at the exponential time scale $\floor{n^t}$ may also converge to the same limit as with the linear time scale, while the time scale for a Brownian motion limit is more complicated than the exponential time scale and may not be useful for analysis. Further, in Example~\ref{ex: lighter than n log n}, we show that, at the exponential time scale, a scaled SRRW-RVM may not have a nondegenerate limit. Theorem~\ref{thm: rescale fdd} and Example~\ref{ex: lighter than n log n} show that the exponential time scale is not a natural time scale.
\subsubsection{Limit process in the critical regime}
The limit process in the critical regime with unbounded $\{v_n\}$ is a centered Gaussian random variable multiple of the square root function. In contrast, in the critical regime with bounded $\{v_n\}$, the almost sure (also $L^2$) limit of the scaled process is a (possibly non-Gaussian) random multiple of the square root function -- see Theorem~\ref{Superdiffusive process convergence}. This regime acts a bridge between the subcritical case $p<p_c$ (with unbounded $\{v_n\}$), where the weak process limit is a centered Gaussian one (Theorem~\ref{Invariance principle}) and the supercritical case $p>p_c$ (with bounded $\{v_n\}$), where the scaled process converges almost surely and in $L^2$ to a (possibly non-Gaussian) random multiple of a deterministic power function (Theorem~\ref{Superdiffusive process convergence}). 
\subsubsection{Additional criticality in the subcritical regime}
The fluctuations of the walk is shown to be diffusive in the entire subcritical regime $p<p_c$. We also provide an explicit expression for the covariance kernel of the limiting Gaussian process. See Theorem~\ref{Invariance principle} for details. This extends and completes the behavior reported in Theorem~3.3 of~\cite{bertenghi2024universal}, where the result has been proved for the regime $p\in\left( {\gamma}/{(\gamma+1)}, p_c \right)$ only. The covariance kernel of the limiting Gaussian process is exactly the same for both the regimes $\l[0,{\gamma}/{(\gamma+1)}\r)$ and $\l({\gamma}/{(\gamma+1)},p_c\r)$ and has a removable discontinuity at $p={\gamma}/(\gamma+1)$. In that sense, $\widehat{p}={\gamma}/(\gamma+1)$ is also a critical point. The proof of diffusive fluctuations requires more careful analysis for the case $p=\widehat{p}$, but the covariance kernel of the limiting Gaussian process makes it continuous in~$p$. See Proposition~\ref{prop: tightness in subcrit}.
\subsubsection{Invariance principle for the subcritical regime}
The proof of invariance principle under a restricted range of the subcritical regime, considered in~\cite{bertenghi2024universal}, used a truncation argument. We provide a unified argument for the invariance principle, which works as long as $\{v_n\}$ is unbounded. The proof clearly motivates the decomposition of the process using relevant martingales and the contribution from each of them. It does not require truncation, but for the process convergence, tightness at $0$ is obtained by carefully showing the scaled SRRW-RVM process to be uniformly equicontinuous in probability. See Lemma~\ref{lem: equi unif cont}.
\subsubsection{Continuity of the limit process under common space scale}
For the SRRW-RVM, the scale is diffusive in the subcritical regime and is superdiffusive elsewhere. While the scales in different regimes are different, the scale $\sigma_n$ used in all cases. The scale $\sigma_n$ can be further simplified in certain cases. Interestingly, the limiting covariance kernel, as well as the limiting process, obtained under scaling by $\sigma_n$ shows left continuity in $p$ at $p_c$, when $\{v_n\}$ is unbounded. See the discussion in Remark~\ref{rem: scales}, as well as Proposition~\ref{prop: tightness for G at crit}.
\subsubsection{Functional SLLN under first moment assumption}
Furthermore, extending the work in~\cite{bertenghi2024universal}, we obtain the process strong laws of large numbers for all $p\in [0,1]$. We obtain almost sure and $L^1$ convergence under the finite first moment assumption alone on the innovation sequence. The laws of large numbers (in the almost sure sense) were obtained under first moment assumption alone for similar models in~\cite{Aguech2025, Hu2024}. However, the present work is the first one, to the best of our knowledge, dealing with process strong laws of large numbers for ERW or SRRW. The convergence can be extended to the $L^2$ sense under finite second moment assumption on the innovation sequence. See Theorem~\ref{thm: SLLN}.

\subsection{Outline of the rest of the paper}
Notations and conventions are given in the Section~\ref{subsec: notation}. The model is stated in details with all the assumptions in Section~\ref{Preliminaries}. Several important quantities for studying the long term behavior of the random walk are described in Section~\ref{Preliminaries}. We provide the laws of large numbers, the phase transition and the main results describing the asymptotic behavior of the SRRW-RVM in this Section~\ref{Preliminaries} as well. For simplicity of presentation and ease of understanding, we first prove the main results given in Section~\ref{Preliminaries} for symmetric Rademacher innovations. This is given in Section~\ref{ERW}.
In Section~\ref{sec: examples}, we provide various examples under the critical regime to illustrate different possible scalings and modes of convergence. Finally, in Section~\ref{sec: SRRW}, we provide the proofs of the main results in Section~\ref{Preliminaries} for general innovations.

\subsection{Notations and Conventions} \label{subsec: notation}
We close this section by introducing some notations used in this work. All vectors will be row vectors of appropriate dimensions, which will be clear from the context. The transpose of the row vector $\boldsymbol{x}$ will be denoted by $\boldsymbol{x}'$. We shall denote a vector of all $0$'s ($1$'s) by $\boldsymbol{0}$ ($\boldsymbol{1}$). We shall denote a diagonal matrix with diagonal elements $\{a_1, \ldots, a_d\}$ as $\diag(a_1, \ldots, a_d)$. For a matrix $\boldsymbol A$, we shall denote its trace or the sum of the diagonal elements by $\tr(\boldsymbol{A})$.

All empty sums and empty products will be $0$ and $1$ respectively.
The indicator function of a set $A$ will be $\mathbbm 1_A$. For nonnegative real sequences $\{a_n\}$ and $\{b_n\}$, we write $a_n\sim b_n$ if $\lim_{n\rightarrow \infty}{a_n}/{b_n}=1$.
We call a sequence $\{c_n\}_{n\geq 1}$ of positive real numbers to be regularly varying with index $\rho\in \mathbb{R}$ and write $\{c_n\}\in RV_\rho$, if
     ${c_{\lfloor \lambda n\rfloor}} \sim \lambda^\rho {c_n}$, for every $\lambda>0$.

We write $X{=}_{\rm d} Y$ for random variables $X$ and $Y$ with same distribution function. In this article, $Z$ will be a standard normal variable and $(B(t), t\ge0)$ will be a standard Brownian motion. We also denote by $D(I)$, the space of all r.c.l.l.\ functions supported on an interval $I \subseteq [0,\infty)$ and taking values in $\mathbb{R}^d$. The dimension will be clear from the context. It is equipped with the Skorohod topology making it a complete separable metric space. For more details, see~\cite[Chapter $3$, Section $16$]{Billingsley1999} and \cite[Chapters $3,12$]{Whitt2002}. The corresponding Borel sigma algebra (generated by the open sets under the Skorohod topology) will be denoted by $\mathcal{D}$. We also occasionally work with the space of continuous $\bb R^d$ valued functions on $[0,\infty)$, denoted by $C[0,\infty)$, and equipped with the topology giving uniform convergence on compact sets.

We shall denote the convergence in finite dimensional distribution and convergence in law, as probability measures on the appropriate metric space, by $\to_{{\rm fdd}}$ and $\to_{{\rm w}}$ respectively. The convergence of the associated random variables will have the same notations as well. The corresponding convergence of the random variables taking values in appropriate metric spaces; in probability, almost surely, in $L^1$ and in $L^2$ will be denoted by $\to_{{\rm P}}$, $\to_{{\rm a.s.}}$, $\to_{\rm L^1}$ and $\to_{\rm L^2}$ respectively.

\section{Step Reinforced Random Walk with Regularly Varying Memory}{\label{Preliminaries}}
In this section we describe the dynamics of the Step Reinforced Random Walk Regularly Varying Memory. The detailed description of the model along with the relevant assumptions is outlined in Section \ref{model descrp}.
Several quantities which play an important role in the study of the walk's asymptotic behavior are discussed in Section \ref{quant of interest}. A detailed computation of the mean squared location of the random walk is provided in Section~\ref{meansqdisp}, for all $p\in [0,1]$ and $\gamma>-1$. Different rates of growth of the variance of the SRRW-RVM provides the existence of a phase transition at $p=p_c$. Theorem~\ref{growth of ES_n^2} summarizes the result. Finally, Section \ref{Oerview} gives an overview of the main results.

\subsection{Model Description}{\label{model descrp}}
Let $\{\xi_n\}_{n\geq 1}$ be a sequence of independent and identically distributed random variables with mean $0$. We shall make further moment assumptions on the sequence as necessary. Such assumptions will be clearly indicated. 
Consider a regularly varying sequence $\{\mu_n\}_{n\geq 1}$ of positive real numbers, with index $\gamma>-1$. Let $\{\beta_n\}_{n\geq 2}$ be an independent sequence of random variables, with $\beta_n$ supported on $\{1,2,\cdots,n-1\}$ and with probability mass function given by
$\mathbb{P}(\beta_n=k)={\mu_k}/{\nu_{n-1}}$, for $1\leq k\leq n-1$,
where the sequence $\{\nu_n\}$ is given by
$\nu_n:=\sum_{k=1}^n\mu_k$.
Further, let $\{\alpha_n\}_{n\geq 2}$ be an i.i.d.\ Bernoulli random variable sequence with parameter $p$. We assume $\{\alpha_n\}_{n\geq 2}$, $\{\beta_n\}_{n\geq 2}$ and $\{\xi_n\}_{n\geq 1}$ to be independent of each other. 
Then the sequence of increments $\{X_n\}_{n\geq 1}$ of the SRRW-RVM is constructed as follows: The first step is taken according to $X_1=\xi_1$. Then, for all $n\geq 1$, the $(n+1)$-th increment is     \begin{equation}{\label{RW definition}}
    X_{n+1}=\alpha_{n+1}X_{\beta_{n+1}}+(1-\alpha_{n+1})\xi_{n+1}.  
    \end{equation}
    
The SRRW-RVM with memory sequence $\{\mu_n\}_{n\geq 1}$, innovation sequence $\{\xi_n\}_{n\geq 1}$ and recollection probability $p$ is defined as $S_0=0$, and for $n\ge 1$,
$S_n=\sum_{k=1}^n X_k$.
The walk is parametrized by the recollection probability $p\in[0,1]$ and the regularly varying memory sequence $\{\mu_n\}$ of index $\gamma>-1$. We define the filtration associated to the process $\{S_n\}_{n\ge0}$, to be given by $\mathcal F_0$, being the trivial $\sigma$-field, $\mathcal{F}_1=\sigma(\xi_1)$ and, for $n\geq 2$, 
$\mathcal{F}_n =\sigma\l(\{\xi_k\}_{k=1}^n,\{\beta_k\}_{k=2}^ n, \{\alpha_k\}_{k=2}^ n\r)$.

 For simplicity, we shall first prove the results for the simple case where the innovations are symmetric Rademacher random variables. In that case, the SRRW-RVM will be referred to as the SRRW-R-RVM. 

For $p=0$, without any recollection, we obtain back the usual mean zero random walk with independent and identically distributed increments. In particular, for symmetric Rademacher random variable $\xi_1$, the walk is the simple symmetric random walk. On other extreme, for $p=1$ with perfect recollection, the walk always repeats the first step.

The steps of SRRW-RVM $\{S_n\}$ are dependent due to recollection, but are identically distributed.
\begin{lemma}{\label{Variance of the increments}}
 For the increments $\{X_n\}_{n\geq 1}$ of the  SRRW-RVM $\{S_n\}_{n\geq 1}$, for all $n \ge 1$, the marginal distribution of $X_n$ is same as the common distribution of the innovation sequence.
\end{lemma}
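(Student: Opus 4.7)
The plan is to prove the lemma by induction on $n$, using the characteristic function. The base case $n=1$ is immediate since $X_1=\xi_1$. For the induction step, assume that $X_k$ has the common distribution of the innovation sequence for every $k \le n$; write $\phi(t) := \mathbb{E}[e^{it\xi_1}]$.

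The key observation is that since $\alpha_{n+1}$ is Bernoulli, the recursion \eqref{RW definition} gives the pathwise identity
\begin{equation*}
 e^{itX_{n+1}} \;=\; \alpha_{n+1}\,e^{itX_{\beta_{n+1}}} + (1-\alpha_{n+1})\,e^{it\xi_{n+1}}.
\end{equation*}
Now $\alpha_{n+1}$ and $\xi_{n+1}$ are independent of $(\xi_1,\ldots,\xi_n,\alpha_2,\ldots,\alpha_n,\beta_2,\ldots,\beta_n)$ by the model assumption, and $\beta_{n+1}$ is independent of the same collection as well. Since $X_1,\ldots,X_n$ are measurable functions of this collection, the triple $(\alpha_{n+1},\beta_{n+1},\xi_{n+1})$ is jointly independent of $(X_1,\ldots,X_n)$. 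Taking expectations and conditioning on $\beta_{n+1}$ in the first term, I obtain
\begin{equation*}
 \mathbb{E}\bigl[e^{itX_{n+1}}\bigr] \;=\; p\,\sum_{k=1}^{n}\mathbb{P}(\beta_{n+1}=k)\,\mathbb{E}\bigl[e^{itX_k}\bigr] + (1-p)\,\phi(t).
\end{equation*}
Applying the induction hypothesis $\mathbb{E}[e^{itX_k}]=\phi(t)$ for each $k \le n$, the weights $\mathbb{P}(\beta_{n+1}=k)$ from \eqref{recollection distrn} sum to one, so the right-hand side reduces to $p\,\phi(t) + (1-p)\,\phi(t) = \phi(t)$. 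Hence $X_{n+1}$ also has the distribution of $\xi_1$, closing the induction.

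There is essentially no obstacle to speak of; the only point deserving a line of care is the joint independence of $(\alpha_{n+1},\beta_{n+1},\xi_{n+1})$ from $(X_1,\ldots,X_n)$, which follows because each $X_k$ for $k\le n$ is a Borel function of the variables indexed up to time $n$, and the index $n+1$ variables were assumed independent of all prior ones. The argument needs neither moment assumptions nor any structure from the memory sequence $\{\mu_n\}$ beyond the fact that \eqref{recollection distrn} is a probability mass function on $\{1,\ldots,n\}$, which explains why the marginal identification is universal across the parameter range being considered in the paper.
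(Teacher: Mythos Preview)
Your proof is correct and follows essentially the same approach as the paper's: induction on $n$, exploiting that $(\alpha_{n+1},\beta_{n+1},\xi_{n+1})$ is independent of the past to express the law of $X_{n+1}$ as a mixture that collapses under the induction hypothesis. The only cosmetic difference is that the paper works with the conditional distribution function $\mathbb{P}(X_{n+1}\le x\mid\mathcal{F}_n)$ and then unconditions, whereas you work directly with the characteristic function; both routes encode the same mixture identity.
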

\begin{proof}
The independence of $\alpha_n$, $\beta_n$ and $\xi_n$ in~\eqref{RW definition} gives, for $n\ge 1$,
$\mathbb{P} (X_{n+1}\le x | \mathcal F_{n}) = (1-p) \mathbb{P} (\xi_1\le x) + {p}{\nu_n^{-1}} \sum_{k=1}^n \mu_k \mathbbm 1_{[X_k\le x]}$.
Unconditioning and noting that $X_1=\xi_1$, we have $X_n {=}_{\rm d} \xi_1$, using induction. 
\end{proof}

\subsection{Certain quantities of interest}{\label{quant of interest}} 
We use martingale methods to prove the law of large numbers and functional limit theorems for the SRRW-RVM in the three regimes. We define some of the relevant martingales in this subsection. 

We begin by providing an expression for the expected conditional increments $X_n$. From~\eqref{RW definition}, we get for $n\geq 1$,
\begin{equation}{\label{Conditional Expectation}}
    \mathbb{E}(X_{n+1}|\mathcal{F}_{n})=\frac{p}{\nu_n}\sum_{k=1}^nX_k\mu_k.
\end{equation}
This suggests the first martingale of our interest 
\begin{equation}
    \label{eq: def M}
    M_n=a_nY_n,
\end{equation}
where
\begin{equation} \label{contd prod}
Y_n=\sum_{k=1}^nX_k\mu_k, \qquad a_1=1 \quad  \text{and for $n>2$,} \quad a_n:=\prod_{i=1}^{n-1}\left(1+p\frac{\mu_{i+1}}{\nu_i}\right)^{-1}.
\end{equation}
The sequence $\{a_n\}$ is regularly varying of index $-p(\gamma+1)$; see Theorem~4 of~\cite{Bojanic1973}. The corresponding martingale differences are given by 
\begin{equation}
       \label{martingale diff M_n} \Delta M_n=a_n\mu_n\left(X_n-\mathbb{E}(X_n|\mathcal{F}_{n-1})\right), \quad \text{for $n\ge 2$} \qquad \text{and} \quad \Delta M_1 = \mu_1 X_1.
\end{equation}

Further, considering the conditional expectation of $S_n$, given by
$\mathbb E( S_{n+1}\mid \mathcal F_n) = S_n + p M_n/ (a_n\nu_n)$,
we have the next martingale of interest, also given by, 
\begin{equation}
    L_1 = X_1, \quad \text{and for $n>2$,} \quad L_n=  S_n - p \sum_{k=1}^{n-1} \frac1{a_k\nu_k} M_k =\sum_{k=1}^n(X_k-\mathbb{E}(X_k|\mathcal{F}_{k-1})). \label{conn L S}
\end{equation}
Note that $\{M_n\}$ is a martingale transform of $\{L_n\}$ with
\begin{equation} \label{eq: M mg transf}
    \Delta M_n = a_n \mu_n \Delta L_n, \quad \text{for $n\ge 1$.}
\end{equation}

\subsection{Phase transition and point of criticality}{\label{meansqdisp}}
    For phase transition, we assume finite second moment of the innovation sequence $\{\xi_n\}$, taken to be $1$, without loss of generality. We then study the growth of the variance of $S_n$ and obtain the phase transition and the point of criticality. The growth rate is determined by summability of the sequence $\{a_n^2 \mu_n^2\}$. Since, $\{a_n \mu_n\}$ is regularly varying of index $\gamma-p(\gamma+1)$, $\{a_n^2 \mu_n^2\}$ is summable when $p>p_c$, or in the supercritical case, and is not summable when $p<p_c$, or in the subcritical case. The variance of $\{S_n\}$ grows linearly in the subcritical case, but is superlinear in the other two cases. In the critical case with $p=p_c$, the sequence $\{a_n^2 \mu_n^2\}$ can be summable, or not so, depending on the choice of the sequence $\{\mu_n\}$. See Section~\ref{sec: examples} for examples of different scalings and modes of convergence in the critical case.

For phase transition, the growth of $\mathbb{E}(S_n^2)$ depends on the second moment of the martingale sequence $\{M_n\}$, which we summarize in the next proposition for different $p$ and $\gamma$. We define a sequence, which will be important for scaling:
\begin{equation}
\sigma_n^2 := \frac1{a_n^2\mu_n^2} \sum_{k=1}^n a_k^2 \mu_k^2. \label{eq: ell}
\end{equation}

\begin{proposition}{\label{Asymptotics of martingale M_n}}
    Assume $\mathbb E(\xi_1^2) =1$. Then, for the martingale $\{M_n\}_{n\geq 1}$, we have
    \begin{equation}{\label{EM_n^2}}
        \mathbb{E}(M_n^2)=\sum_{k=1}^{n}a_k^2\mu_k^2-p^2\sum_{k=1}^{n-1}\left(\frac{a_{k+1}\mu_{k+1}}{a_k\nu_{k}}\right)^2\mathbb{E}(M_{k}^2).
    \end{equation}
    
    In particular, if $\sum_{k=1}^\infty a_k^2\mu_k^2=\infty$, then 
    \begin{align} \label{eq: Mn asymp}
        \mathbb{E}(M_n^2) \sim \begin{cases}
             \frac{a_n^2\mu_n^2}{2(1-p)(\gamma+1)-1}n, & \text{if } 0\leq p<p_c,\\
            a_n^2\mu_n^2 \sigma_n^2, & \text{if } p=p_c,
        \end{cases}.
    \end{align}

On the other hand, if $\sum_{k=1}^\infty a_k^2\mu_k^2<\infty$, then
    $\sup_{n\geq 1}\mathbb{E}(M_n^2)<\infty$,
    and the martingale $M_n$ converges almost surely and in $L^2$ to a nondegenerate random variable, denoted as $M_\infty$.
\end{proposition}
\begin{proof}
    Using the expression for the martingale difference and the conditional expectation from~\eqref{Conditional Expectation} and~\eqref{martingale diff M_n}, and Lemma \ref{Variance of the increments}, we obtain~\eqref{EM_n^2}.
The result then is immediate for the case $\sum_n a_n^2 \mu_n^2 < \infty$.

Next, we consider the case $\sum_n a_n^2\mu_n^2=\infty$. For the second term of~\eqref{EM_n^2}, observe that,
    \[
        p^2 \sum_{k=1}^{n-1}\left(\frac{a_{k+1}\mu_{k+1}}{a_k\nu_{k}}\right)^2\mathbb{E}M_{k}^2 \le \sum _{k=1}^{n-1}\left(\frac{\mu_{k+1}}{\nu_k}\right)^2\left(\sum_{l=1}^ka_l^2\mu_l^2\right) = \oh \l({\sum_{k=1}^na_k^2\mu_k^2}\r)
    \]
    using Kronecker's lemma, since the sequence $\{{\mu_{n+1}^2}/{\nu_n^2}\}$ is summable and $\sum_{n=1}^\infty a_n^2\mu_n^2=\infty$. This gives 
  $\mathbb{E}\left(M_n^2\right) \sim \sum_{k=1}^n a_k^2 \mu_k^2=a_n^2 \mu_n^2 \sigma_n^2$. Then~\eqref{eq: Mn asymp} follows from Karamata's theorem.
\end{proof}

Inspired by Proposition~\ref{Asymptotics of martingale M_n}, we define the sequence \begin{equation}
    \label{eq: v}
    v_n^2 := \sum_{k=1}^n a_k^2 \mu_k^2,
\end{equation}
which plays an important role in growth of $\mathbb E(S_n^2)$. The summability of $\{a_n^2 \mu_n^2\}$ is then equivalent to $\{v_n\}$ being bounded.

We are now ready to obtain the rate of the mean squared location from Proposition~\ref{Asymptotics of martingale M_n}.
\begin{theorem}{\label{growth of ES_n^2}}
    Let $\{S_n\}_{n\geq 0}$ be the SRRW-RVM with $\mathbb E(\xi_1^2)=1$. Then:
    \begin{enumerate}
        \item For unbounded $\{v_n\}$, we have \label{var Sn subcritical}
        \begin{equation} \label{eq: Sn asymp}            \mathbb{E}S_n^2 \sim \begin{cases}
                \frac{(2\gamma+1-p)}{(1-p)(2(1-p)(\gamma+1)-1)}n, &\text{if } p\in[0,p_c),\\
                (2\gamma+1)^2\sigma_n^2, & \text{if } p=p_c.
            \end{cases}
        \end{equation}
        \item For bounded $\{v_n\}$, $\left\{a_n\mu_nS_n\right\}_{n\geq 1}$ is $L^2$-bounded.
    \end{enumerate}
\end{theorem}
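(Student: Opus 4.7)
The plan is to analyze $r_n := \mathbb{E}(S_n^2)$ through a coupled recursion with the mixed moment $T_n := \mathbb{E}(S_n M_n)$, and then extract the leading-order asymptotics using Proposition~\ref{Asymptotics of martingale M_n} for $e_n := \mathbb{E}(M_n^2)$ together with Karamata's theorem and the asymptotic $\nu_n \sim n\mu_n/(\gamma+1)$. Expanding $S_n^2 = (S_{n-1}+X_n)^2$, together with Lemma~\ref{Variance of the increments} giving $\mathbb{E}(X_n^2) = 1$ and the conditional expectation formula~\eqref{cond S_n}, yields
\[
    r_n - r_{n-1} = 1 + \frac{2p\,T_{n-1}}{a_{n-1}\nu_{n-1}}.
\]
Similarly, expanding $S_n M_n = (S_{n-1}+X_n)(M_{n-1}+\Delta M_n)$, taking expectations, and applying~\eqref{cond S_n} along with the form of $\Delta M_n$ from~\eqref{martingale diff M_n} produces
\[
    T_n - T_{n-1} = a_n\mu_n + \frac{p\,e_{n-1}}{a_{n-1}\nu_{n-1}} - \frac{p^2 a_n\mu_n\, e_{n-1}}{(a_{n-1}\nu_{n-1})^2}.
\]

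For part (i) with $p < p_c$, Proposition~\ref{Asymptotics of martingale M_n} gives $e_n \sim a_n^2\mu_n^2\,n/(2(1-p)(\gamma+1)-1)$; the third term in the $T_n$-recursion is then $O(a_n\mu_n/n)$ and hence negligible, while the middle term is of order $a_n\mu_n$ like the first. Since $\{a_k\mu_k\}$ is regularly varying of index $\gamma - p(\gamma+1) > -1$, Karamata gives $\sum_{k=1}^n a_k\mu_k \sim n a_n\mu_n/((1-p)(\gamma+1))$, whence $T_n \sim c_1\,n a_n\mu_n$ with an explicit $c_1$. Substituting back into the $r_n$-recursion and simplifying via the algebraic identity $(1+p)(2(1-p)(\gamma+1)-1) + 2p^2(\gamma+1) = 2\gamma+1-p$ yields the claimed leading constant. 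For $p = p_c$ with unbounded $\{v_n\}$, $e_n \sim v_n^2$ and the middle term dominates; the ansatz $T_n \sim (2\gamma+1)\, v_n^2/(a_n\mu_n)$ then solves the $T_n$-recursion, using $2p_c(\gamma+1) = 2\gamma+1$ and that $a_n\mu_n$ has regular variation index $-1/2$, so that the discrete derivative of $1/(a_n\mu_n)$ is $\sim 1/(2n\,a_n\mu_n)$. Feeding this into the $r_n$-recursion yields $r_n \sim (2\gamma+1)^2 v_n^2/(a_n\mu_n)^2 = (2\gamma+1)^2 \sigma_n^2$.

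For part (ii) with bounded $\{v_n\}$, Proposition~\ref{Asymptotics of martingale M_n} gives $C := \sup_k \|M_k\|_2 < \infty$; combining the decomposition~\eqref{conn L S} with Minkowski's inequality produces
\[
    \|a_n\mu_n S_n\|_2 \leq a_n\mu_n\,\|L_n\|_2 + pC\,a_n\mu_n\sum_{k=1}^{n-1}\frac{1}{a_k\nu_k}.
\]
Martingale orthogonality and $\mathbb{E}((\Delta L_k)^2) \leq \mathbb{E}(X_k^2) = 1$ give $\|L_n\|_2^2 \leq n$, while $\sum_k a_k^2\mu_k^2 < \infty$ forces $n a_n^2\mu_n^2 \to 0$ (trivial when the index of $a_n^2\mu_n^2$ is $<-1$ and, at the critical index $-1$, a consequence of slow variation of the residual), so the first summand is $o(1)$. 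For the second, $\{1/(a_k\nu_k)\}$ is regularly varying of index $-(1-p)(\gamma+1) > -1$ whenever $p > \widehat{p} := \gamma/(\gamma+1)$, which holds since $p \geq p_c > \widehat{p}$; Karamata then yields $a_n\mu_n\sum_{k=1}^{n-1} 1/(a_k\nu_k) \to (\gamma+1)/(1-(1-p)(\gamma+1))$, a finite constant, completing the bound.

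The hard part will be the critical case in part (i): only slowly varying quantities such as $v_n^2$ and $\ell_0(n) := n a_n^2\mu_n^2$ separate the leading behavior from corrections, so one must be meticulous in verifying that each error term is genuinely negligible along the full critical boundary, and that the ansatz for $T_n$ is consistent at the level of the relevant slowly varying factors.
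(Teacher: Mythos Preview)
Your mixed-moment recursion for $T_n=\mathbb E(S_nM_n)$ is the paper's recursion for $\mathbb E(S_nY_n)$ in disguise (since $M_n=a_nY_n$, the two formulas are algebraically identical), and for $p<p_c$ your Karamata analysis of the summed increments is essentially what the paper does via Lemma~\ref{lem: rate sum}. The constant computation is correct.

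The genuine gap is the critical case $p=p_c$. Verifying that an ansatz is \emph{consistent} with a recursion does not prove that the true solution has that asymptotic; you would still need a stability argument. Worse, the step ``the discrete derivative of $1/(a_n\mu_n)$ is $\sim 1/(2n\,a_n\mu_n)$'' is not justified: regular variation of index $-1/2$ does not by itself control successive differences of a sequence. The fix is simpler than the ansatz route and uses only what you already have. Since the increment $T_n-T_{n-1}$ is known \emph{exactly}, just sum it. The dominant contribution is
\[
p\sum_{k=2}^{n}\frac{e_{k-1}}{a_{k-1}\nu_{k-1}}
\;\sim\; p(\gamma+1)\sum_{k\le n}\frac{v_k^2}{k\,a_k\mu_k},
\]
whose summand is regularly varying of index $-1/2$, so Karamata gives $T_n\sim 2p(\gamma+1)\,v_n^2/(a_n\mu_n)=(2\gamma+1)\,v_n^2/(a_n\mu_n)$. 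Feeding this into $r_n-r_{n-1}\sim 2pT_{n-1}/(a_{n-1}\nu_{n-1})$ yields a summand $\sim (2\gamma+1)^2 v_n^2/(n a_n^2\mu_n^2)$, which is slowly varying, and Karamata again gives $r_n\sim (2\gamma+1)^2\sigma_n^2$. You must also check that the first and third terms of the $T$-increment, which each sum to $O(n a_n\mu_n)$, are genuinely $o(v_n^2/(a_n\mu_n))$: this amounts to $v_n^2/(na_n^2\mu_n^2)=\sigma_n^2/n\to\infty$, which is exactly Lemma~\ref{lem: ell} and is the slowly-varying bookkeeping you flagged at the end. The paper avoids all of this by solving the $\mathbb E(S_nY_n)$ recursion in closed form first and only then passing to asymptotics through Lemma~\ref{lem: rate sum}.

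For part~(ii) your route via the decomposition~\eqref{conn L S} and Minkowski is different from the paper's, which bounds the third term of the explicit double-sum formula~\eqref{ES_n^2 expression simplified}. Your argument is shorter and perfectly valid; it trades the exact second-moment identity for a triangle-inequality bound, which is all that $L^2$-boundedness requires.
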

\begin{proof}
    Observe that, from \eqref{Conditional Expectation} and Lemma \ref{Variance of the increments}, we have 
    \begin{equation}{\label{E(S_n^2) recursion}}
        \mathbb{E}S_n^2 =\mathbb{E}S_{n-1}^2+\frac{2p}{\nu_{n-1}}\mathbb{E}(S_{n-1}Y_{n-1})+1 
        =n+2p\sum_{k=1}^{n-1}\frac{\mathbb{E}(S_kY_k)}{\nu_k},
    \end{equation}
    where $\{\mathbb{E}(S_nY_n)\}$ satisfies the difference equation
    $$\mathbb{E}(S_nY_n)=\left(1+\frac{p \mu_n}{\nu_{n-1}}\right)\mathbb{E}(S_{n-1}Y_{n-1})+ \frac{p}{\nu_{n-1}} \mathbb{E}Y_{n-1}^2+\mu_n.$$
    Solving the difference equation, we obtain
    \begin{equation*}
        \mathbb{E}(S_nY_n) =\frac{1}{a_n}\sum_{k=1}^na_k\mu_k+\frac{p}{a_n}\sum_{k=1}^{n-1}\frac{a_{k+1}\mathbb{E}Y_k^2}{\nu_k}
        =\frac{1}{a_n}\sum_{k=1}^na_k\mu_k+\frac{p}{a_n}\sum_{k=1}^{n-1}\frac{a_{k+1}\mathbb{E}M_k^2}{a_k^2\nu_k}.
    \end{equation*}
    Plugging into~\eqref{E(S_n^2) recursion} gives
    \begin{equation}{\label{ES_n^2 expression simplified}}
        \mathbb{E}S_{n}^2= n+2p\sum_{k=1}^{n-1}\sum_{j=1}^k\frac{a_j\mu_j}{a_k\nu_k} +2p^2\sum_{k=1}^{n-1}\sum_{j=1}^{k-1}\frac{a_{j+1}\mathbb{E}M_j^2}{a_j^2a_k\nu_j\nu_k}.
    \end{equation}
Repeated use of Karamata's theorem simplifies the first two terms of~\eqref{ES_n^2 expression simplified} to
    $$n+ 2p\sum_{k=1}^{n-1}\sum_{j=1}^k (a_j\mu_j)/(a_k\nu_k) \sim n(1+p)/(1-p).$$

For the third term, we first consider the case where $\{v_n\}$ is unbounded or $\sum_n a_n^2 \mu_n^2 = \infty$. Then again, repeated application of Karamata's theorem and~\eqref{eq: Mn asymp} give
\begin{equation} \label{3rd term of E_S_n^2 in nonsuperdiffusive reg}
    2p^2\sum_{k=1}^{n-1}\sum_{j=1}^{k-1}\frac{a_{j+1}\mathbb{E}M_j^2}{a_j^2a_k\nu_k\nu_j} \sim 
        \begin{cases}
            \frac{2p^2(\gamma+1)}{(1-p)(2(1-p)(\gamma+1)-1)}n, & \text{for } 0\leq p<p_c,\\
            (2\gamma+1)^2\sigma_n^2, & \text{for $p=p_c$ and $\{v_n\}$ unbounded.}
        \end{cases}
\end{equation}
    
    Combining~\eqref{ES_n^2 expression simplified}~-~\eqref{3rd term of E_S_n^2 in nonsuperdiffusive reg}, we obtain~\eqref{eq: Sn asymp}.
   
Next, we consider $\{v_n\}$ to be bounded or equivalently, $\sum_n a_n^2 \mu_n^2 < \infty$. Then, by Karamata's theorem, we must have $p\ge p_c$. In this case, $\{M_n\}$ being an $L^2$-bounded martingale (using Proposition~\ref{Asymptotics of martingale M_n}), and using Karamata's theorem, the third term of~\eqref{ES_n^2 expression simplified} is bounded by a constant multiple of 
\begin{equation}\label{third term Sn bound supercritical}
  \sum_{k=1}^{n-1} \frac1{a_k \nu_k} \sum_{j=1}^{k-1} \frac1{a_j \nu_j} \sim  \l( \frac{\gamma + 1}{p(\gamma + 1) - \gamma} \r)^2 \frac1{2 a_n^2 \mu_n^2}.
\end{equation}
Since $\sum_{k=n}^\infty a_k^2 \mu_k^2 \to 0$, by Karamata's theorem, we have $n a_n^2 \mu_n^2 \to 0$. Therefore, the first two terms on the right side of~\eqref{ES_n^2 expression simplified} are $\oh \l(a_n^2 \mu_n^2\r)$. Combining~\eqref{ES_n^2 expression simplified} and~\eqref{third term Sn bound supercritical}, we get $ a_n \mu_n S_n $ is $L^2$-bounded.
\end{proof}

\begin{remark} \label{rem: phase}
The above theorem confirms that $p=p_c$ is the point of criticality, where the variance of $a_n \mu_n S_n$ changes from diverging to $\infty$ to being bounded. Theorem~\ref{growth of ES_n^2}~\eqref{var Sn subcritical} suggests that in the critical case with unbounded $\{v_n\}$, SRRW-RVM will scale like $\sigma_n$. This scaling is, in general, distinct from $\sqrt{n \log n}$, which is the scale typically used in the critical case of SRRW or ERW (see, for example~\cite{Bertenghi2022} or the model considered by \citet{Laulin2022}). Further, in the critical case with bounded $\{v_n\}$, it is suggested by Proposition~\ref{Asymptotics of martingale M_n} that there will be almost sure and $L^2$ convergence. In Section~\ref{sec: examples}, we shall give interesting examples satisfying the cases where $\sigma_n^2$ is not asymptotically equivalent to $n \log n$.
\end{remark}

\subsection{Main results}{\label{Oerview}}
We conclude this section with the statements of the main results that we shall prove in this article. 

\subsubsection{Law of large numbers} \label{subsec: slln}
In this subsection, we gather the results on law of large numbers. We shall first show the convergence of the linearly scaled location of SRRW-RVM, followed by the convergence of the linearly scaled SRRW-RVM process in $(D([0,\infty)),\mathcal D)$ space. The mode of convergence depends on the moment condition assumed for the innovation sequence. We consider only the cases where the recollection probability $p$ takes values in $[0,1)$, as all the steps are same when $p=1$, making the problem trivial. 

We first consider the marginal convergence. For almost sure and $L^1$ convergence, the finite mean assumption of the innovation sequence is enough. The convergence is in $L^2$ for innovations with finite second moment. This result extends the earlier SLLN -- which proved almost sure convergence only, assuming finite second moment of the innovation sequence -- see Theorem~$3.2$ of~\cite{bertenghi2024universal}, where it was established only for the parameter regime $p\in \left( \widehat{p}, p_c \right)$ and $\gamma\geq 0$. 
\begin{proposition} \label{prop: SLLN}
    For an SRRW-RVM $\{S_n\}_{n\ge 1}$ with zero mean innovation sequence, when $p\in[0,1)$, we have $S_n/n\to0$ almost surely and in $L^1$. Furthermore, the convergence is in $L^2$ when the innovation sequence has finite variance.
\end{proposition}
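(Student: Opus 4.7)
The plan is to establish the three modes of convergence in sequence, using Theorem~\ref{growth of ES_n^2}, Lemma~\ref{Variance of the increments}, and the martingale decomposition~\eqref{conn L S} as the principal tools. For the $L^2$ convergence I would directly invoke Theorem~\ref{growth of ES_n^2}: for every $p\in[0,1)$, the asymptotic rate of $\mathbb E S_n^2$ is of order strictly less than $n^2$, since it grows linearly in the subcritical regime, behaves like $\sigma_n^2$ (regularly varying of index $1$) in the critical regime with unbounded $\{v_n\}$, and is of order $1/(a_n\mu_n)^2$ (regularly varying of index $2(p(\gamma+1)-\gamma) < 2$ whenever $p<1$) in the cases where $\{v_n\}$ is bounded. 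Hence $\mathbb E(S_n/n)^2 \to 0$.

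For the $L^1$ convergence under only $\mathbb E|\xi_1|<\infty$, I would use a truncation-coupling argument. Fix $K>0$ and set $\xi_k^K := \xi_k\mathbbm 1_{|\xi_k|\le K} - \mathbb E[\xi_k\mathbbm 1_{|\xi_k|\le K}]$, which are bounded and mean zero. Construct the auxiliary RVSRRW $\{\tilde S_n\}$ via~\eqref{RW definition} with the same $\{\alpha_k,\beta_k\}$ but innovations $\{\xi_k^K\}$. The $L^2$ step just established yields $\mathbb E|\tilde S_n|/n \to 0$. The difference $\{X_k-\tilde X_k\}$ itself satisfies~\eqref{RW definition} with innovation sequence $\{\xi_k-\xi_k^K\}$, so Lemma~\ref{Variance of the increments} applied to the difference process gives $X_k-\tilde X_k \overset d= \xi_1-\xi_1^K$, and hence
\[
\frac{\mathbb E|S_n-\tilde S_n|}{n} \le \mathbb E|\xi_1-\xi_1^K| \le 2\mathbb E\bigl[|\xi_1|\mathbbm 1_{|\xi_1|>K}\bigr].
\]
Taking $\limsup_n$ followed by $K\to\infty$ gives $\mathbb E|S_n|/n \to 0$.

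For almost sure convergence, work with the decomposition $S_n = L_n + p A_n$ from~\eqref{conn L S}, where $A_n := \sum_{k=1}^{n-1} M_k/(a_k\nu_k)$. Under finite variance, $\mathbb E(\Delta L_k)^2 \le \mathrm{Var}(\xi_1)$, so Kolmogorov's convergence theorem applied to $\sum_k \Delta L_k/k$ combined with Kronecker's lemma gives $L_n/n\to 0$ a.s. For the drift term $A_n$, Abel summation yields $A_n = M_{n-1}G_{n-1} - \sum_{k=2}^{n-1} \Delta M_k G_{k-1}$ with $G_k := \sum_{j=1}^k 1/(a_j\nu_j)$; a regime-by-regime analysis using Proposition~\ref{Asymptotics of martingale M_n} for the a.s. behaviour of $M_n$ together with Karamata estimates for $G_k$ (regularly varying of index $1-(1-p)(\gamma+1)$ in the divergent case, convergent otherwise) establishes $A_n/n\to 0$ a.s. Under only a finite first moment, an Etemadi-style truncation of the innovations at level $k$, combined with Borel--Cantelli applied to $\sum_k \mathbb P(|\xi_k|>k) \le \mathbb E|\xi_1| < \infty$, reduces the problem to the finite-variance case for a modified walk plus a residual that stabilizes almost surely.

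The main obstacle is this last step: allowing time-dependent truncation levels breaks the identical distribution assumption underlying~\eqref{RW definition}, so the finite-variance arguments, including the martingale decomposition~\eqref{conn L S} and the Abel/Karamata analysis of $A_n$, must be extended to independent but time-inhomogeneous innovation sequences, with care taken to track how the recollection mechanism couples the time-inhomogeneity into $A_n$ across all regimes.
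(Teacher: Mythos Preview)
Your $L^2$ and $L^1$ arguments are correct and take a different route from the paper. For $L^2$ you read off the variance asymptotics from Theorem~\ref{growth of ES_n^2}, while the paper works through the decomposition~\eqref{conn L S} and Lemmas~\ref{lem: conv L}--\ref{lem: conv M}. Your $L^1$ argument---truncating at a fixed level $K$ and observing that the difference of two RVSRRWs sharing the same $(\alpha_k,\beta_k)$ is itself an RVSRRW with innovation $\xi_k-\xi_k^K$---is elegant and not in the paper, which instead uses time-varying truncation of the steps plus tail control (Lemmas~\ref{lemma: tail BC}--\ref{lem: tail L1}). Your Abel-summation handling of $A_n$ in the finite-variance case can be pushed through regime by regime, but is heavier than the paper's one-line observation that $M_k/(a_k\nu_k)\to0$ a.s.\ (Lemma~\ref{lem: conv M}) already gives $A_n/n\to0$ by Cesaro averaging.

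The genuine gap is the almost sure statement under only a first moment. You correctly identify the obstacle---truncating the innovations $\xi_k$ at level $k$ destroys the i.i.d.\ structure needed by~\eqref{RW definition}---but do not resolve it. The paper's solution is to truncate the \emph{steps} $X_n$ (not the innovations $\xi_n$) at level $n$, setting $\widetilde X_n = X_n\mathbbm{1}_{\{|X_n|\le n\}} - \mathbb E(X_n\mathbbm{1}_{\{|X_n|\le n\}}\mid\mathcal F_{n-1})$, and to work directly with the truncated martingale differences in~\eqref{eq: tilde X}--\eqref{eq: tilde M L}. Since $X_n\overset{d}{=}\xi_1$ by Lemma~\ref{Variance of the increments}, one has $\sum_n\mathbb P(|X_n|>n)=\sum_n\mathbb P(|\xi_1|>n)<\infty$, so Borel--Cantelli controls the truncation error on the steps without ever rebuilding the recollection mechanism with time-inhomogeneous innovations. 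That is the missing idea.
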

Noted that for $p=0$, we get back the usual random walk with independent and identically distributed increments $\{\xi_n\}$.
We can now use Proposition~\ref{prop: SLLN} to obtain the process convergence.
\begin{theorem} \label{thm: SLLN}
    For an SRRW-RVM $\{S_n\}_{n\geq 1}$ with zero mean innovation sequence $\{\xi_n\}_{n\ge 1}$, we have for all $p\in[0,1)$, the process ${S_{\lfloor n\cdot \rfloor}}/n$ converging to the zero process almost surely and in $L^1$ as random elements of $(D([0,\infty)),\mathcal D)$. Furthermore, for innovation variables with finite variance, the convergence is in $L^2$.
\end{theorem}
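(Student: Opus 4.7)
The plan is to upgrade the marginal strong law of large numbers in Proposition~\ref{prop: SLLN} to a functional law in the Skorohod space, by exploiting continuity of the limit. Because the zero process is continuous on $[0,\infty)$, Skorohod convergence on $D([0,T])$ to the zero process is equivalent to uniform convergence on $[0,T]$. Thus for each $T>0$, it suffices to analyze
\[
M_{n,T}:=\sup_{t\in[0,T]}\frac{|S_{\lfloor nt\rfloor}|}{n}=\max_{0\le k\le \lfloor nT\rfloor}\frac{|S_k|}{n},
\]
and then assemble the pieces via the standard weighted-sum metric on $D([0,\infty))$.

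For the almost sure statement, I would argue via a splitting. Fix $\varepsilon>0$. On the event of full probability where $|S_k|/k\to 0$ (Proposition~\ref{prop: SLLN}), choose a random but almost surely finite $K$ with $\sup_{k\ge K}|S_k|/k<\varepsilon/T$. For $K<k\le\lfloor nT\rfloor$,
\[
\frac{|S_k|}{n}=\frac{k}{n}\cdot\frac{|S_k|}{k}\le T\cdot\frac{\varepsilon}{T}=\varepsilon,
\]
while for $k\le K$, $|S_k|/n\le(\max_{j\le K}|S_j|)/n\to 0$ as $n\to\infty$. Taking the supremum, then letting $n\to\infty$ and finally $\varepsilon\downarrow 0$, yields $M_{n,T}\xrightarrow{{\rm a.s.}}0$.

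For $L^1$ (and, when $\mathbb E\xi_1^2<\infty$, $L^2$) convergence, I plan to establish uniform integrability of $\{M_{n,T}\}_n$ (respectively $\{M_{n,T}^2\}_n$) and combine with the almost sure convergence above. The pointwise bound
\[
M_{n,T}\le\frac1n\sum_{k=1}^{\lfloor nT\rfloor}|X_k|,
\]
together with Lemma~\ref{Variance of the increments} (which gives $|X_k|\overset{d}{=}|\xi_1|$), reduces matters to uniform integrability of the right hand side. Splitting each $|X_k|$ at a threshold $C$ into a bounded part (contributing at most $TC$) and a tail part (whose expectation is bounded by $T\mathbb E[|\xi_1|\mathbbm 1_{|\xi_1|>C}]\to 0$ as $C\to\infty$), a Markov-inequality bound on the tail delivers the claim. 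For the $L^2$ case, Cauchy--Schwarz gives
\[
M_{n,T}^2\le\frac{T}{n}\sum_{k=1}^{\lfloor nT\rfloor}|X_k|^2,
\]
after which the same truncation argument, now using $\mathbb E\xi_1^2<\infty$, yields uniform integrability of $\{M_{n,T}^2\}_n$.

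The main obstacle is the uniform-integrability step, since the $X_k$ are dependent. However, the identity of marginals $X_k\overset{d}{=}\xi_1$ from Lemma~\ref{Variance of the increments} bypasses any need for independence and makes the standard truncation argument go through verbatim. Finally, $L^p$ convergence (with $p\in\{1,2\}$) of $M_{n,T}$ on each compact interval lifts to convergence in $(D([0,\infty)),\mathcal D)$ via the weighted-sum bounded Skorohod metric and dominated convergence.
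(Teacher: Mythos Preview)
Your proof is correct. The almost sure part is essentially identical to the paper's argument (splitting the supremum at a point beyond which $|S_k|/k$ is small). For the $L^1$ and $L^2$ parts, however, you take a genuinely different and more elementary route: you use the crude bound $|S_k|\le\sum_{j\le k}|X_j|$ (and its Cauchy--Schwarz variant for $L^2$) together with the marginal identity $X_k\overset{d}{=}\xi_1$ from Lemma~\ref{Variance of the increments} to obtain uniform integrability, and then combine with the almost sure convergence. The paper instead works through the decomposition~\eqref{conn L S}, $S_n=L_n+p\sum_{k<n}\frac{M_k}{a_k\nu_k}$, controlling the $L$-part via Doob's $L^2$ inequality on the truncated martingale $\widetilde L_n$ and the $M$-part via Lemma~\ref{lem: conv M}; this uses the full apparatus of Lemmas~\ref{lemma: trunc mart conv}--\ref{lem: conv M}. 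Your argument is shorter and avoids the martingale decomposition entirely for this theorem; the paper's approach has the advantage that the same martingale estimates are reused repeatedly in the fluctuation results later on. Both reduce to showing $\sup_{t\le T}|S_{\lfloor nt\rfloor}|/n\to 0$ in the relevant mode for each $T$, which is exactly the content of the Janson reduction the paper cites.
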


\subsubsection{Almost sure limit of the scaled process for bounded \texorpdfstring{$\{v_n\}$}{vn} }{\label{subsec: as L^2 limit}} For the results in the remainder of the section, we assume finite second moments of the innovations, taken to be $1$ without loss of generality. As suggested by Theorem~\ref{growth of ES_n^2}, the scaled SRRW-RVM process converges almost surely and in $L^2$ if and only if $\{v_n\}$ is bounded. In particular, this holds when $p\in(p_c, 1]$ and, for appropriate choices of $\{\mu_n\}$, also when $p=p_c$. These cases include $p=1$, not mentioned separately in the results of Section~\ref{subsec: slln}. 

\begin{theorem}{\label{a.s & L^2 convergence}}
     Let $\{S_n\}_{n\geq 0}$ be the SRRW-RVM with the zero mean innovation sequence $\{\xi_n\}_{n\ge 1}$ satisfying $\mathbb{E}\xi_1^2=1$. If  $\{v_n\}$ is bounded and $p\in[p_c,1]$, then
     \begin{equation*}
         a_n\mu_nS_n\to \frac{p(\gamma+1)}{p(\gamma+1)-\gamma}M_\infty, \quad \text{ in $L^2$ and a.s.},
     \end{equation*}
     where $M_\infty$ is the almost sure and $L^2$ limit of the martingale $M_n$, defined in Proposition~\ref{Asymptotics of martingale M_n}.

     Additionally, when the innovation sequence is symmetric Rademacher, the limiting random variable $M_\infty$ is platykurtic, that is, it has kurtosis strictly smaller than $3$ and is not Gaussian.
\end{theorem}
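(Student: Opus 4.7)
The plan has two distinct halves: the convergence statement, which I would handle by a martingale decomposition plus Kronecker/Toeplitz arguments, and the platykurtic statement, which will require a careful fourth-moment computation.

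For the convergence, the starting point is the identity \eqref{conn L S}: $S_n = L_n + p\sum_{k=1}^{n-1} M_k/(a_k\nu_k)$. So I would treat the two summands separately. To show $a_n\mu_n L_n \xrightarrow{a.s.} 0$, I would invoke Kronecker's lemma with $y_k = \Delta L_k$ and $b_k = 1/(a_k\mu_k)$. By Lemma~\ref{lem: index}, $\{b_n\}$ is regularly varying of positive index $p(\gamma+1)-\gamma \ge 1/2$, so (after passing to a nondecreasing version) $b_n \nearrow \infty$. The key fact is that, by~\eqref{eq: M mg transf}, $y_k/b_k = a_k\mu_k\,\Delta L_k = \Delta M_k$, so $\sum_k y_k/b_k = \lim_n M_n = M_\infty$ exists a.s.\ under the bounded-$\{v_n\}$ hypothesis by Proposition~\ref{Asymptotics of martingale M_n}. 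Kronecker's lemma then yields $b_n^{-1}\sum_{k=1}^n y_k = a_n\mu_n L_n \to 0$ a.s. For the $L^2$ version, since $\{\Delta L_k\}$ are orthogonal with $\mathbb{E}(\Delta L_k^2)\le\mathbb{E}(X_k^2)=1$ (by Lemma~\ref{Variance of the increments}), we have $\mathbb{E}(L_n^2)\le n$ and hence $\mathbb{E}((a_n\mu_n L_n)^2) \le n\,a_n^2\mu_n^2 \to 0$ --- the last limit following from summability of $\{a_n^2\mu_n^2\}$ together with Karamata's theorem applied to the regularly varying sequence of index at most $-1$, as was used at the end of the proof of Theorem~\ref{growth of ES_n^2}.

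For the second piece, I would use a Toeplitz-type argument. Since $\{1/(a_k\nu_k)\}$ is regularly varying of index $-(1-p)(\gamma+1)\in[-1/2,0)$ (again by Lemma~\ref{lem: index}, using $p\ge p_c$), its partial sums diverge, and Karamata's theorem yields
\[
a_n\mu_n\sum_{k=1}^{n-1}\frac{1}{a_k\nu_k}\,\sim\,\frac{n\mu_n/\nu_n}{1-(1-p)(\gamma+1)}\,\longrightarrow\,\frac{\gamma+1}{p(\gamma+1)-\gamma},
\]
using $n\mu_n/\nu_n\to\gamma+1$ and $1-(1-p)(\gamma+1)=p(\gamma+1)-\gamma$. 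Combined with $M_k\xrightarrow{a.s.,L^2} M_\infty$ (Proposition~\ref{Asymptotics of martingale M_n}), a weighted Cesàro/Toeplitz argument produces
\[
a_n\mu_n\,p\sum_{k=1}^{n-1}\frac{M_k}{a_k\nu_k}\,\longrightarrow\,\frac{p(\gamma+1)}{p(\gamma+1)-\gamma}\,M_\infty
\]
almost surely, and the $L^2$ version follows from the same Toeplitz argument combined with dominated convergence using the $L^2$-boundedness of $\{M_k\}$. Assembling the two pieces gives the claimed limit.

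For the platykurtic statement, assume $\xi_1$ is symmetric Rademacher. The boundary case $p=1$ is immediate: $a_n = \nu_1/\nu_n$ gives $M_n\equiv\mu_1\xi_1$, which is Rademacher and therefore has kurtosis $1$. For $p\in[p_c,1)$ I would work with the recursion obtained by expanding $\mathbb{E}(M_n^4)$ using $M_n = M_{n-1}+\Delta M_n$. Letting $q_k=\mathbb{E}(X_k\mid\mathcal F_{k-1})=pM_{k-1}/(a_{k-1}\nu_{k-1})$, an elementary computation using $X_k\in\{-1,+1\}$ gives the conditional moments $\mathbb{E}(\Delta M_k^2\mid\mathcal F_{k-1})=a_k^2\mu_k^2(1-q_k^2)$, $\mathbb{E}(\Delta M_k^3\mid\mathcal F_{k-1})=-2a_k^3\mu_k^3q_k(1-q_k^2)$, and $\mathbb{E}(\Delta M_k^4\mid\mathcal F_{k-1})=a_k^4\mu_k^4(1-q_k^2)(1+3q_k^2)$. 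One then derives a recursion for $\mathbb{E}(M_n^4)$ parallel to \eqref{EM_n^2}, and I would prove by induction that $\mathbb{E}(M_n^4)<3(\mathbb{E}(M_n^2))^2$ for every $n\ge 1$, with the base case $n=1$ giving kurtosis $1$. The inductive step uses the fact that $|q_k|\le p<1$, so $1+3q_k^2<3(1-q_k^2)+3q_k^2\cdot(\text{correction})$, giving the strict contribution from each new increment; combined with the vanishing or strictly negative effect of the cross terms $\mathbb{E}(\Delta M_{k-1}^3\Delta M_k)$ type, this yields the strict inequality for $M_n$. Passing $n\to\infty$ with $L^2$- and $L^4$-boundedness (the latter from the bounded increments and summability of $\{a_k^2\mu_k^2\}$) transfers the strict inequality to $M_\infty$, so the kurtosis of $M_\infty$ is strictly less than $3$ and $M_\infty$ cannot be Gaussian. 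The bookkeeping of the fourth-moment cross terms---in particular the nonvanishing conditional third moment in the Rademacher case, which couples indices across different scales---will be the most delicate part of the argument, and I expect that is where care is needed to get the strict inequality rather than just $\le 3$.
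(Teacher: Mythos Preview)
Your convergence argument is correct and matches the paper's: both use Kronecker's lemma (via the $L^2$-convergent martingale $M_n=\sum_{k\le n}a_k\mu_k\Delta L_k$) to kill $a_n\mu_n L_n$, and a Toeplitz/Ces\`aro average together with the Karamata asymptotic $\sum_{k<n}(a_k\nu_k)^{-1}\sim\frac{\gamma+1}{p(\gamma+1)-\gamma}(a_n\mu_n)^{-1}$ for the second summand in~\eqref{conn L S}.

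The platykurtic half, however, has two genuine gaps. First, your inductive step leans on an increment-level bound of the type $1+3q_k^2<3(1-q_k^2)$; but this is equivalent to $q_k^2<1/3$, which can fail since $|q_k|=p|Y_{k-1}|/\nu_{k-1}$ can equal $p$ (take all previous steps equal), and $p\ge p_c$ can exceed $1/\sqrt3$ for large $\gamma$. Concretely, already at $n=2$ one has $3(\mathbb{E}(\Delta M_2)^2)^2-\mathbb{E}(\Delta M_2)^4=a_2^4\mu_2^4(1-p^2)(2-6p^2)$, which is negative for $p>1/\sqrt3$; so the term-by-term positivity you sketch does not hold. To rescue the induction one must instead show $D_n=\lambda_n D_{n-1}+R_n$ with $\lambda_n>0$ and $R_n\ge 0$, which requires an exact algebraic cancellation of the $\mathbb{E}M_{n-1}^4$ coefficient in $R_n$ --- not the kind of inequality your sketch provides. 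Second, even granting $D_n>0$ for every $n$, strict inequalities do not survive limits: you still need a uniform positive lower bound to conclude $3(\mathbb{E}M_\infty^2)^2-\mathbb{E}M_\infty^4>0$.

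The paper avoids both issues by working with $Y_n$ rather than $M_n$. Using $X_n^2\equiv1$, the second, third and fourth moments of $Y_n$ satisfy simple linear recursions, and setting $b_n:=3(\mathbb{E}Y_n^2)^2-\mathbb{E}Y_n^4$ one obtains
\[
b_n=\Bigl(1+\tfrac{4p\mu_n}{\nu_{n-1}}\Bigr)b_{n-1}
+\frac{12p^2\mu_n^2}{\nu_{n-1}^2}(\mathbb{E}Y_{n-1}^2)^2
+\frac{8p\mu_n^3}{\nu_{n-1}}\mathbb{E}Y_{n-1}^2
+2\mu_n^4,
\]
in which every increment is \emph{manifestly} nonnegative. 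Solving and multiplying by $a_n(4p)\sim a_n^4$ yields an explicit strictly positive convergent series for $3(\mathbb{E}M_\infty^2)^2-\mathbb{E}M_\infty^4$, delivering both the positivity at finite $n$ and the strict inequality in the limit in one stroke.
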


We now consider the corresponding process convergence, both almost surely and in $L^2$, in the Skorohod space.
\begin{theorem}{\label{Superdiffusive process convergence}}
     Let $\{S_n\}_{n\geq 0}$ be the SRRW-RVM with the zero mean innovation sequence $\{\xi_n\}_{n\ge 1}$ satisfying $\mathbb{E}\xi_1^2=1$. If $\{v_n\}$ is bounded and $p\in[p_c,1]$, then
     \begin{align*}         \left(a_n\mu_nS_{\lfloor{nt}\rfloor}:t\geq 0\right)&\to\left(\frac{p(\gamma+1)}{p(\gamma+1)-\gamma}t^{p(\gamma+1)-\gamma} M_{\infty}: t\geq 0\right) \quad \text{ in $L^2$ and a.s.},
     \end{align*}
     in $(D([0,\infty)),\mathcal{D})$.
     \end{theorem}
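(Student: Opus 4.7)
The plan is to promote the scalar convergence of Theorem~\ref{a.s & L^2 convergence} to a functional convergence by exploiting the regularly varying structure of $\{a_n\mu_n\}$. Set $\alpha:=p(\gamma+1)-\gamma$, so that $\{a_n\mu_n\}\in RV_{-\alpha}$ by Lemma~\ref{lem: index}; moreover $\alpha\ge 1/2>0$ throughout $p\in[p_c,1]$, so the candidate limit $t\mapsto Ct^\alpha M_\infty$ with $C=p(\gamma+1)/(p(\gamma+1)-\gamma)$ is continuous on $[0,\infty)$, and consequently Skorohod convergence to it is the same as uniform convergence on every compact $[0,T]$. For fixed $t>0$ the pointwise step is the identity
\[
a_n\mu_n S_{\lfloor nt\rfloor}=\frac{a_n\mu_n}{a_{\lfloor nt\rfloor}\mu_{\lfloor nt\rfloor}}\cdot\bigl(a_{\lfloor nt\rfloor}\mu_{\lfloor nt\rfloor}S_{\lfloor nt\rfloor}\bigr);
\]
the first factor converges deterministically to $t^\alpha$, and by Theorem~\ref{a.s & L^2 convergence} applied along the subsequence $\lfloor nt\rfloor$ the second factor converges both a.s.\ and in $L^2$ to $CM_\infty$. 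This settles the marginal limit at each $t>0$, and at $t=0$ both sides vanish.

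\textbf{Uniform convergence on compacts.} Fix $T>0$ and split $[0,T]=[0,\delta]\cup[\delta,T]$. On $[\delta,T]$, the uniform convergence theorem for regularly varying sequences gives $\sup_{t\in[\delta,T]}|a_n\mu_n/(a_{\lfloor nt\rfloor}\mu_{\lfloor nt\rfloor})-t^\alpha|\to 0$, while the a.s.\ scalar limit $a_k\mu_k S_k\to CM_\infty$ implies the uniform-in-$t$ statement $\sup_{t\in[\delta,T]}|a_{\lfloor nt\rfloor}\mu_{\lfloor nt\rfloor}S_{\lfloor nt\rfloor}-CM_\infty|\to 0$ a.s., since on the convergence event a fixed tail suffices for all sufficiently large $n$. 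Multiplying yields uniform a.s.\ convergence on $[\delta,T]$. On $[0,\delta]$ we use Potter's bounds: for any $\eta\in(0,\alpha)$ there exist $C_\eta<\infty$ and $N_\eta$ such that $a_n\mu_n/(a_k\mu_k)\le C_\eta(k/n)^{\alpha-\eta}$ for all $1\le k\le n$, $n\ge N_\eta$. Hence
\[
\sup_{t\in[0,\delta]}a_n\mu_n|S_{\lfloor nt\rfloor}|\le C_\eta\,\delta^{\alpha-\eta}\sup_{k\ge 1}|a_k\mu_k S_k|,
\]
and the right-hand random variable is a.s.\ finite (being a convergent sequence by Theorem~\ref{a.s & L^2 convergence}). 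The limit itself on $[0,\delta]$ is bounded by $\delta^\alpha|M_\infty|$. Sending $\delta\to 0$ then gives the required negligibility near $0$, completing the a.s.\ uniform convergence and therefore the a.s.\ Skorohod convergence.

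\textbf{$L^2$ upgrade and main obstacle.} For the $L^2$ part, the key point is to produce an $L^2$ dominant for $\sup_{k\ge 1}|a_k\mu_k S_k|$; once this is in hand, the bound in the previous display gives $L^2$-negligibility of the contribution from $[0,\delta]$, and on $[\delta,T]$ the already established $L^2$ convergence at each $t$ combined with uniform integrability (from the $L^2$ dominant and the regularly varying ratio) upgrades the a.s.\ uniform convergence to $L^2$. To produce the dominant, use the decomposition $S_n=L_n+p\sum_{k=1}^{n-1}(a_k\nu_k)^{-1}M_k$ from~\eqref{conn L S}. Doob's $L^2$ maximal inequality applied to the $L^2$-bounded martingale $\{M_n\}$ (Proposition~\ref{Asymptotics of martingale M_n}) yields $\sup_k|M_k|\in L^2$, and Karamata's theorem gives $a_n\mu_n\sum_{k=1}^{n-1}(a_k\nu_k)^{-1}=O(1)$ since $\nu_n\sim\mu_n n/(\gamma+1)$, so the recollection contribution $a_n\mu_n\cdot p\sum_k M_k/(a_k\nu_k)$ is $L^2$-dominated. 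The main obstacle is controlling $a_n\mu_n L_n$: the martingale $L_n$ has only $\mathbb E L_n^2=O(n)$, so $a_n\mu_n L_n$ is merely $L^2$-bounded (with norm of order $n^{1/2-\alpha}$) and $L_n$ itself does not admit a uniform Doob bound; the $L^2$ dominant for $\sup_n|a_n\mu_n L_n|$ must be extracted by a careful Abel-summation argument using the martingale transform identity \eqref{eq: M mg transf} to reexpress $a_n\mu_n L_n$ in terms of increments of $M_n$, or by combining the two pieces of the decomposition before taking suprema.
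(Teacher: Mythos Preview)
Your almost sure argument is correct and is essentially the paper's own proof: both split $[0,T]$ at a small $t_0$, use uniform convergence of the regularly varying ratio $a_n\mu_n/(a_{\lfloor nt\rfloor}\mu_{\lfloor nt\rfloor})$ on $[t_0,T]$ together with the scalar limit from Theorem~\ref{a.s & L^2 convergence}, and control $[0,t_0]$ via the a.s.\ finiteness of $\sup_{k\ge1}|a_k\mu_k S_k|$.

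For the $L^2$ part you have correctly located the obstruction but not removed it, so the proof is incomplete. Your whole strategy hinges on $\sup_{k\ge1}|a_k\mu_k S_k|\in L^2$, and the Abel-summation route you sketch for the $L_n$ contribution requires that $\sum_{k<n}\bigl|\tfrac{1}{a_{k+1}\mu_{k+1}}-\tfrac{1}{a_k\mu_k}\bigr|$ be comparable to $1/(a_n\mu_n)$, i.e.\ essentially eventual monotonicity of $a_k\mu_k$; this is \emph{not} guaranteed for a general regularly varying $\{\mu_n\}$, so the argument does not close under the stated hypotheses. The paper sidesteps the issue entirely by never bounding a supremum over all $k$. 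From the decomposition~\eqref{conn L S}, after adding and subtracting $M_\infty$ inside the sum, it writes
\[
\sup_{t\le T}\Bigl|a_n\mu_n S_{\lfloor nt\rfloor}-Ct^{\alpha}M_\infty\Bigr|
\;\le\; a_n\mu_n\sup_{t\le T}\bigl|L_{\lfloor nt\rfloor}\bigr|
\;+\;a_n\mu_n\sum_{k<\lfloor nT\rfloor}\frac{|M_k-M_\infty|}{a_k\nu_k}
\;+\;|M_\infty|\,\epsilon_n(T),
\]
with a deterministic $\epsilon_n(T)\to0$ coming from the regular variation of the partial sums $\sum_{k<m}(a_k\nu_k)^{-1}$. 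The first term is handled by Doob's inequality on the \emph{finite} horizon, $\mathbb{E}\sup_{k\le m}L_k^2\le 4\,\mathbb{E}L_m^2\le 4m$, combined with $m\,a_m^2\mu_m^2\to0$ (a consequence of $\sum a_k^2\mu_k^2<\infty$ via Karamata, or directly from Kronecker applied to $\sum a_k^2\mu_k^2\,\mathbb{E}(\Delta L_k)^2<\infty$); the second term is $L^2$-negligible by Ces\`aro averaging against $\|M_k-M_\infty\|_2\to0$, using~\eqref{eq: sum wt}; the third is a deterministic factor times the fixed $L^2$ variable $M_\infty$. At no point is an $L^2$ bound on $\sup_{k\ge1}|a_k\mu_k L_k|$ required.
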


\begin{remark}
    Theorem~\ref{Superdiffusive process convergence} also includes the case $p=1>p_c$. Then,~\eqref{contd prod} simplifies to $a_n = \nu_1/\nu_n$ and hence, we have 
    $a_n \mu_n \sim {(\gamma+1)\mu_1}/n$.
    Thus, for $p=1$, Theorem~\ref{Superdiffusive process convergence} restates $S_{\lfloor nt \rfloor} = \lfloor nt \rfloor \xi_1$ giving $S_{\lfloor nt \rfloor}/n \to t \xi_1$ with probability $1$. In this case, we have $M_\infty = \mu_1 \xi_1$.
\end{remark}
\begin{remark} \label{rem: crit ae wt}
    For almost sure and in $L^2$ limit in the critical case, we must have bounded $\{v_n\}$, or equivalently $\sum a_n^2 \mu_n^2 < \infty$. If the scale $1/(a_n \mu_n)$ is of order smaller than or equal to $\sqrt{n \log n}$, then $\sum a_n^2 \mu_n^2 = \infty$. So to obtain almost sure and in $L^2$ convergence in the critical case, the scale must necessarily be heavier than $\sqrt{n \log n}$.
\end{remark}

\subsubsection{Gaussian weak limits of the scaled process for unbounded \texorpdfstring{$\{v_n\}$}{vn}}{\label{subsec: Gauss weak lim}} Now, we consider unbounded $\{v_n\}$. This happens when $p \in [0, p_c)$ and, for appropriate choices of $\{\mu_n\}$ with $p=p_c$. We shall continue to assume the innovation sequence to be of finite second moment, taken to be $1$ without loss of generality. Then the diffusive limit for the process has already been established in~\cite{bertenghi2024universal} for the parameter values $p\in\left(\widehat{p}, p_c\right)$. We extend the result to the entire subcritical regime $p\in[0,p_c)$ and $\gamma>-1/2$. We also obtain the Gaussian limit at $p=p_c$ and $\gamma>-1/2$, when $\{v_n\}$ is unbounded. However, in this case the scale changes from $\sqrt{n}$ to $\sigma_n$.

We consider the subcritical case first.
\begin{theorem}{\label{Invariance principle}}
         Let $\{S_n\}_{n\geq 0}$ be the SRRW-RVM with the zero mean innovation sequence $\{\xi_n\}_{n\ge 1}$ satisfying $\mathbb{E}\xi_1^2=1$. Then, for all $p\in \left[0,p_c\right)$, we have 
         \[
            \left( n^{-1/2} S_{\lfloor{nt}\rfloor}: t\geq 0\right)\to_{\rm{w}} \left(\mathcal{G}(t): t\geq 0\right),
        \]
        in $(D([0, \infty)), \mathcal D)$, where $\left(\mathcal{G}(t): t\geq 0\right)$ is a continuous and centered Gaussian process, starting from the origin, with the covariance kernel given by, for $0\leq s\leq t$,
        \begin{equation}\label{eq: cov kernel}   \mathbb{E}\left(\mathcal{G}(s)\mathcal{G}(t)\right)=
            \begin{cases}
            \frac{s}{(1-p)(\gamma-p(\gamma+1))}\left(\gamma - \frac{p((2-p)(\gamma+1)-1)}{(2(1-p)(\gamma+1)-1)}\left(\frac{s}{t}\right)^{\gamma-p(\gamma+1)}\right), &\text{for $p\ne\widehat{p}$,}\\    s\left(\gamma^2 +(\gamma+1)^2 - \gamma(\gamma+1) \log \frac{s}{t}\right), &\text{for $p=\widehat{p}$.}           
            \end{cases}
        \end{equation}
\end{theorem}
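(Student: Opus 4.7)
My plan is to exploit the identity~\eqref{conn L S}, $S_m = L_m + p \sum_{k=1}^{m-1} M_k/(a_k\nu_k)$, and rewrite the walk as a single weighted sum of the martingale differences of $L$. Substituting $M_k = \sum_{j=1}^k a_j\mu_j \Delta L_j$ and swapping the order of summation yields
\begin{equation*}
S_m = \sum_{j=1}^m w_j^{(m)}\, \Delta L_j, \qquad w_j^{(m)} := 1 + p\, a_j\mu_j \sum_{k=j}^{m-1}\frac{1}{a_k\nu_k}\ (j<m), \quad w_m^{(m)}:=1.
\end{equation*}
Since $p<p_c$ forces the index $(1-p)(\gamma+1)$ of $\{a_k\nu_k\}$ to exceed $1/2$ (crossing $1$ precisely at $p=\widehat p$), Karamata's theorem applied to the tail or partial sums of the regularly varying sequence $\{1/(a_k\nu_k)\}$, combined with the standard asymptotic $\mu_n/\nu_n\sim(\gamma+1)/n$, then delivers the pointwise (and locally uniform on $(0,1]$) limit
\begin{equation*}
w_{\lfloor um\rfloor}^{(m)} \longrightarrow w_p(u) := 1 + \frac{p(\gamma+1)}{p(\gamma+1)-\gamma}\bigl(u^{\gamma-p(\gamma+1)}-1\bigr) \quad (p\ne\widehat p),
\end{equation*}
with the degenerate exponent producing $w_{\widehat p}(u) = 1 - \gamma\log u$; a single calculation handles both $p<\widehat p$ and $\widehat p<p<p_c$.

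Next I would prove the functional martingale CLT $n^{-1/2} L_{\lfloor n\cdot\rfloor} \xrightarrow{\rm w} B$ in $D([0,\infty))$, with $B$ a standard Brownian motion. The predictable quadratic variation splits as
\begin{equation*}
\mathbb E\bigl((\Delta L_k)^2\mid\mathcal F_{k-1}\bigr) = \frac{p}{\nu_{k-1}}\sum_{j=1}^{k-1}\mu_j X_j^2 + (1-p) - \frac{p^2}{(a_{k-1}\nu_{k-1})^2} M_{k-1}^2.
\end{equation*}
By Lemma~\ref{Variance of the increments} we have $X_j^2\overset{d}{=}\xi_1^2$, so the $\mu$-weighted average has mean~$1$; its fluctuation around $1$ I would control by a second moment argument using the Cauchy--Schwarz bound $\mathbb E|X_j^2 X_l^2|\le \mathbb E\xi_1^4$ (if $\mathbb E\xi_1^4=\infty$ the Lindeberg version works by a truncation). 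By Proposition~\ref{Asymptotics of martingale M_n}, $\mathbb E M_{k-1}^2 = O(k\, a_{k-1}^2\mu_{k-1}^2)$, so the last summand contributes only $O(\log n)$ after summation over $k\le n$ using $\mu_k/\nu_k \sim (\gamma+1)/k$. Altogether $n^{-1}\langle L\rangle_{\lfloor nt\rfloor}\xrightarrow{\rm P} t$; the conditional Lindeberg condition follows from $|\Delta L_k|\le |X_k| + |\mathbb E(X_k\mid\mathcal F_{k-1})|$ together with uniform integrability of $\xi_1^2$.

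With both ingredients in hand, I would stitch them together by a continuous-mapping/Stieltjes argument applied to $S_{\lfloor nt\rfloor}/\sqrt n$, treating it as the deterministic-in-the-limit weight $w_{\lfloor ns\rfloor}^{(\lfloor nt\rfloor)}$ integrated against the scaled martingale $n^{-1/2}L_{\lfloor n\cdot\rfloor}$. Using the locally uniform convergence of the weights on compacts of $(0,t]$, the finite-dimensional limit is the centered Gaussian process
\begin{equation*}
\mathcal G(t) = \int_0^t w_p(s/t)\, dB(s),
\end{equation*}
whose covariance $\mathbb E[\mathcal G(s)\mathcal G(t)] = \int_0^{s\wedge t} w_p(u/s)w_p(u/t)\,du$ reduces by the elementary integrals of $u^\beta$ with $\beta=\gamma-p(\gamma+1)\in(-1/2,\infty)$ to the closed form in~\eqref{eq: cov kernel}; the removable singularity at $p=\widehat p$ is handled by substituting $w_{\widehat p}$ and using $\int_0^1 \log u\,du=-1$, $\int_0^1 (\log u)^2\,du=2$. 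Continuity of the paths of $\mathcal G$ follows from continuity of $w_p$ on $(0,1]$ and its mild singularity ($u^\beta$ with $\beta>-1/2$, or logarithmic at $\widehat p$) at the origin.

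The main obstacle is tightness in the Skorohod topology, specifically uniform equicontinuity in probability at $t=0$: in the regime $\widehat p<p<p_c$ the weight $w_p(u)$ blows up as $u\downarrow 0$, and the scaled walk is erratic near the origin. I would address this using the linear bound $\mathbb E S_m^2\lesssim m$ from Theorem~\ref{growth of ES_n^2}, Doob's maximal inequality applied to $L_n$, and a careful split of the drift $p\sum_{k\le m} M_k/(a_k\nu_k)$ into a martingale piece and a small $L^2$-remainder, in order to show that $\mathbb P\bigl(\sup_{t\le \epsilon} n^{-1/2}|S_{\lfloor nt\rfloor}| > \delta\bigr)$ can be made arbitrarily small uniformly in $n$ by taking $\epsilon$ small. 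Tightness away from $0$ is then routine from Aldous' criterion using the same $L^2$-bound on increments; the unified treatment of $p=\widehat p$ is a secondary delicacy that I would handle via the uniform convergence theorem for slowly varying functions at the boundary index.
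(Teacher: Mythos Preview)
Your decomposition $S_m=\sum_j w_j^{(m)}\Delta L_j$ with $w_j^{(m)}=1+pa_j\mu_j(\eta_m-\eta_j)$ is exactly the paper's identity~\eqref{eq: L eta}, and your limit representation $\mathcal G(t)=\int_0^t w_p(s/t)\,dB(s)$ yields the correct covariance kernel; the algebraic check against~\eqref{eq: cov kernel} goes through. However, the step you label ``continuous-mapping/Stieltjes argument'' has a genuine gap.

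The problem is twofold. First, the passage from $n^{-1/2}L_{\lfloor n\cdot\rfloor}\xrightarrow{\rm w}B$ to $n^{-1/2}S_{\lfloor n\cdot\rfloor}\xrightarrow{\rm w}\mathcal G$ is \emph{not} a continuous mapping on $D$: the weight $w_{\lfloor ns\rfloor}^{(\lfloor nt\rfloor)}$ depends on both $s$ and the upper limit $t$, so the map $L\mapsto S$ changes with $n$ and, for $p\in(\widehat p,p_c)$, the limiting integrand $w_p(u)\sim c\,u^{\gamma-p(\gamma+1)}$ blows up at $u=0$. Locally uniform convergence on compacts of $(0,t]$ is therefore insufficient even for finite-dimensional convergence without a separate argument controlling the contribution from $[0,\epsilon]$. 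Second, even granting finite-dimensional convergence, the process $t\mapsto n^{-1/2}S_{\lfloor nt\rfloor}$ is not a martingale (the weights are $t$-dependent), so tightness in $D$ cannot come from a martingale FCLT or Aldous' criterion applied to $S$ directly; your ``routine Aldous'' remark does not apply here.

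The paper circumvents both obstacles by splitting $S=N+p\eta M$ (respectively $\overline N-p\overline\eta M$, or the triangular variant at $p=\widehat p$) and proving a \emph{bivariate} functional martingale CLT for $(N,M)$ suitably scaled, via Touati's theorem. Since $N$ and $M$ are genuine martingales independent of the time horizon, the bivariate FCLT delivers process convergence (hence tightness) for free. The recombination then becomes pointwise multiplication by the deterministic coefficient $pt^{p(\gamma+1)-\gamma}$ together with a remainder controlled by the uniform convergence $\eta_{\lfloor nt\rfloor}/\eta_n\to t^{p(\gamma+1)-\gamma}$ on $[1/T,T]$; this \emph{is} a continuous operation on $D([1/T,T])$. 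Only afterwards is the origin handled by the equicontinuity bound of Lemma~\ref{lem: equi unif cont}. Your single-martingale route can be rescued by running a triangular martingale CLT on each Cram\'er--Wold combination (for fdd's) and then proving tightness for $S$ by hand, but that is considerably more work than the bivariate device, not a continuous-mapping shortcut.

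A secondary issue: your argument that the $\mu$-weighted average of $X_j^2$ converges to $1$ uses $\mathbb E|X_j^2X_l^2|\le\mathbb E\xi_1^4$, which is not assumed, and the truncation fallback is not spelled out. The paper's Lemma~\ref{lem: cond second mom} handles this cleanly under second moments alone by observing that $\{X_k^2-1\}$ is itself the step sequence of an RVSRRW with innovation $\xi_k^2-1$ and applying the first-moment SLLN for the auxiliary martingale.
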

Further, as a consequence of the above theorem, we have the following central limit theorem in the diffusive regime.
\begin{corollary}{\label{CLT in diffusive regime}}
    Let $\{S_n\}_{n\geq 0}$ be the SRRW-RVM with the zero mean innovation sequence $\{\xi_n\}_{n\ge 1}$ satisfying $\mathbb{E}\xi_1^2=1$. Then, for all $p\in [0,p_c)$ and $\gamma>-1/{2}$, $S_n/\sqrt{n}$ converges weakly to a centered normal distribution with limiting variance $\varsigma^2$ given by:
    \begin{equation} \label{eq: lt v}
        \varsigma^2=
        \begin{cases}
            \frac{2\gamma+1-p}{(1-p)(2(1-p)(\gamma+1)-1)}, & \text{for } p\neq \widehat{p},\\
            2\gamma^2+2\gamma+1, & \text{for } p=\widehat{p}.
        \end{cases}
    \end{equation}
\end{corollary}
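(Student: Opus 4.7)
The corollary is a direct consequence of the functional invariance principle in Theorem~\ref{Invariance principle}, so the plan is a short two-step argument followed by an algebraic simplification. First, I would invoke Theorem~\ref{Invariance principle} to get weak convergence of $(S_{\lfloor nt\rfloor}/\sqrt n : t\ge 0)$ to the centered Gaussian process $\mathcal G$ in $(D([0,\infty)),\mathcal D)$. Since $\mathcal G$ has continuous sample paths almost surely, the evaluation map $\pi_1 : D([0,\infty))\to \mathbb R$ defined by $\pi_1(x)=x(1)$ is continuous at every trajectory of $\mathcal G$ in the Skorohod topology (this is a standard fact about the Skorohod space; see e.g.~\cite[Section 13]{Billingsley1999}). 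The continuous mapping theorem then gives $S_n/\sqrt n = \pi_1(S_{\lfloor n\cdot\rfloor}/\sqrt n) \xrightarrow{\rm w} \mathcal G(1)$, and $\mathcal G(1)$ is centered Gaussian by the Gaussian process property of $\mathcal G$.

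Second, I would compute $v^2 = \mathbb E(\mathcal G(1)^2)$ by evaluating the covariance kernel~\eqref{eq: cov kernel} at $s=t=1$. In the case $p=\widehat p$, the logarithmic term vanishes and one immediately obtains $v^2 = \gamma^2+(\gamma+1)^2 = 2\gamma^2+2\gamma+1$. In the case $p\ne \widehat p$, substituting $s=t=1$ and placing the two terms inside the parentheses over the common denominator $2(1-p)(\gamma+1)-1$ reduces the kernel value to
\[
v^2 = \frac{\gamma(2(1-p)(\gamma+1)-1) - p((2-p)(\gamma+1)-1)}{(1-p)(\gamma - p(\gamma+1))(2(1-p)(\gamma+1)-1)}.
\]
The only calculation required is to verify the algebraic identity
\[
\gamma(2(1-p)(\gamma+1)-1) - p((2-p)(\gamma+1)-1) = (2\gamma+1-p)(\gamma - p(\gamma+1)),
\]
after which the factor $\gamma - p(\gamma+1)$ cancels and yields the claimed expression. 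Both sides expand to $2\gamma^2+\gamma - 2p\gamma^2 - 4p\gamma - p + p^2(\gamma+1)$, so the identity holds.

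There is no genuine obstacle here, since the functional invariance principle does all the probabilistic work; the only care required is to note the continuity of the time-$1$ projection at the continuous trajectories of $\mathcal G$. The hypothesis $\gamma>-\tfrac12$ is used twice and only mildly: to ensure the subcritical range $[0,p_c)$ is nonempty (equivalently $p_c>0$), and, together with $p<p_c$, to guarantee that the denominator $2(1-p)(\gamma+1)-1$ of the variance is strictly positive so that $v^2$ is a well-defined nonnegative quantity.
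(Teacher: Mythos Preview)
Your proposal is correct and matches the paper's approach: the paper states the corollary immediately after Theorem~\ref{Invariance principle} as ``a consequence of the above theorem'' with no further proof, so your continuous-mapping argument at $t=1$ together with the algebraic reduction of~\eqref{eq: cov kernel} is exactly the intended justification. The algebraic identity you verify is correct, and your remark on the role of $\gamma>-\tfrac12$ is a useful addition.
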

\begin{remark} \label{rem: cov cont}
    We write $\mathcal G^{(p)}(t)\equiv \mathcal G (t)$ to emphasize the collection of processes indexed by the recollection parameter $p$. For all $0\le s\le t$ and $p\in[0, p_c)$, we have
    \begin{equation} \label{eq: pos corr}
    \mathbb{E} \l(\mathcal G^{(p)}(s) \mathcal G^{(p)}(t) \r) \ge \mathbb{E} \l(\mathcal G^{(p)}(s)^2\r).
    \end{equation}
    Furthermore, the covariance kernel~\eqref{eq: cov kernel} and the limiting variance~\eqref{eq: lt v} are continuous in the parameter $p$ on $[0, p_c)$ and, in particular, at $\widehat{p}$. As $\mathcal G^{(p)}(t)$ are Gaussian processes, their finite dimensional distributions converge as $p\rightarrow \widehat{p}$.
\end{remark}

The next proposition shows the continuity of the process under the topology of weak convergence.
{\begin{proposition}{\label{prop: tightness in subcrit}}
    The processes $(\mathcal G^{(p)}(t))_{t\ge 0}$ is continuous in $p\in \l [0,p_c \r)$ under the topology of weak convergence, as processes in $C[0,\infty)$.
\end{proposition}}
\begin{remark}
Note that as $p\uparrow p_c$, we have $\bb E (\mathcal G^{(p)}(t))^2 \uparrow \infty$ from Corollary~\ref{CLT in diffusive regime}. Hence the collection of processes $(\mathcal G ^{(p)}(t))_{t\ge 0}$ might no longer be tight for the entire subcritical regime $[0,p_c)$. But to show the continuity of the process $\mathcal G^{(p)}(t)$ at a point in $p \in \l [0, p_c \r)$, it is enough to check the tightness of the collection in some neighborhood of ${p}$ contained in $\l [0, p_c \r)$, which can be taken to be bounded away from $p_c$.
\end{remark}
\begin{remark} \label{rem: BM subcrit}
For usual SRRW (constant $\mu_n$), whenever $0\le p < p_c$, the covariance kernel~\eqref{eq: cov kernel} simplifies to
$\bb{E}\mathcal G(s) \mathcal G(t)= {s} \l({s}/{t}\r)^{-p}/(1-2p)$,
for $0\le s\le t$. As observed by \cite{Bertoin2021b} (see Equation (3) of the same), the limiting process $(\mathcal G(t): t\ge 0)$ in this case, has the same law as that of $({t^p}B(t^{1-2p})/{\sqrt{1-2p}} : t\ge 0)$. This time changed Brownian motion arises as the scaling limit of the suitably normalized martingale process $(M_{\floor{nt}} : t\ge 0)$ and the factor $t^p$ is the limit of $a_n/a_{\floor{nt}}$.  It might be interesting to note that the SRRW, in the subcritical regime, viewed at a suitable (nonlinear) time scale $(nt^{{1}/(1-2p)})$, with a time dependent space scale $(\sqrt{n}t^{p/(1-2p)})$, also converges weakly to a constant multiple of a standard Brownian motion, namely,
$({B(t)}/{\sqrt{1-2p}} : t\ge 0)$.
The idea of the proof is similar, except for noting the change in the space-time scale (see Section~\ref{subcrit reg} for details). See also Theorem 1.5 of \cite{Bertenghi2022} for comparison with the space-time scale arising in the critical regime which leads to a Brownian motion limit.

For general $\mu_n$, the location $S_n$ admits either of the decompositions in \eqref{eq: S N split}. The joint invariance principle of the process $\overline T'_{n, \floor{nt}} = \l( {\overline N_{\floor {nt}} }, { \overline \eta_n M_{ \floor{nt}}} \r) /{\sqrt n}$ (for $p\in [0,\hp)$) or $T'_{n, \floor{nt}} = \l( {N_{\floor {nt}}}, { \eta_n M_{ \floor{nt} }} \r) /{\sqrt n}$ (for $p\in (\hp, p_c)$) is given by Proposition~\ref{neq gamma/gamma+1 joint invariance}. Note that the first component of the process is a constant multiple of the standard Brownian motion while the other is a power law multiple of a time changed Brownian motion. Thus, for nontrivial $\mu_n$, one cannot obtain a suitable space-time scaling which gives a Brownian motion limit for both the components, and as a consequence, for $( {S_{\floor {nt} }}/{\sqrt n} : t\ge 0)$. However, see Section~\ref{subsec: non lin time scale} for a comparison with the corresponding space time scales leading to Brownian limit in the critical regime (in fdd), where ${N_n}/{\sigma_n}$ is negligible in probability and the asymptotic behavior of ${S_n}/{\sigma_n}$ is driven completely by $\eta_nM_n$.
\end{remark}

Next we consider the critical regime $p=p_c$ with unbounded $\{v_n\}$. The scaling is superdiffusive. The cases considered in the literature, always have $\sqrt{n \log n}$ scaling. On the other hand, in general, the scaling $\sigma_n$ simplifies to a wide range of scalings including the usual $\sqrt{n \log n}$. The scaling depends on the regularly varying sequence $\{\mu_n\}$. We refer the reader to Section~\ref{sec: examples} for interesting examples, where the scalings can be both lighter and heavier compared to $\sqrt{n \log n}$. Further, while considering the functional limit theorems, it is customary to consider the exponential time scale $\floor{n^t}$ and a corresponding time dependent space scale $\sqrt{n^t \log n}$ to obtain a Brownian motion limit. In Section~\ref{sec: examples}, we provide examples to show that the time scale $\floor{n^t}$ may not yield a Brownian motion limit, or even no nondegenerate limit at all. The limit process under the linear time scale is a random multiple of a deterministic power law function. The index of the power law function is $1/2$, which is the limit of the index of the power law function of the almost sure limit in the supercritical case, but the random variable is distributed as Gaussian unlike the almost sure limit. Furthermore, the covariance kernel is related to that of the subcritical case - see Remark~\ref{rem: scales}. Thus, the weak limit process under the linear time scale in the critical regime acts as a bridge between the diffusive limit for the subcritical case and the almost sure limit for bounded $\{v_n\}$. 

\begin{theorem} \label{Supercritical weak convergence}
     Let $\{S_n\}_{n\geq 0}$ be the SRRW-RVM with the zero mean innovation sequence $\{\xi_n\}_{n\ge 1}$ satisfying $\mathbb{E}\xi_1^2=1$. If $\{\mu_n\}$ is such that $\{v_n\}$ is unbounded for $p=p_c$, then, for $\sigma_n$ is as in~\eqref{eq: ell} and $Z$ is a standard Gaussian random variable,
     \[         
     \left(\frac{ {S_{\lfloor{nt}\rfloor}}}{\sigma_n}: t\geq 0\right)\to_{\rm{w}} \left(\sqrt{t}(2\gamma+1)Z: t\geq 0\right) \quad
     \text{in $(D([0,\infty)),\mathcal{D})$.}
     \]
\end{theorem}
The following corollary about the marginal convergence follows immediately for $p=p_c$ and unbounded $\{v_n\}$.
\begin{corollary} {\label{CLT in critical}}
    Let $\{S_n\}_{n\geq 0}$ be the SRRW-RVM with the zero mean innovation sequence $\{\xi_n\}_{n\ge 1}$ satisfying $\mathbb{E}\xi_1^2=1$. If $\{\mu_n\}$ is such that $\{v_n\}$ is unbounded for $p=p_c$, then
    $$\frac{S_n}{\sigma_n} \to_{\rm{w}} N(0,(2\gamma+1)^2).$$
\end{corollary}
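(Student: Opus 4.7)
The plan is to deduce Corollary~\ref{CLT in critical} directly from Theorem~\ref{Supercritical weak convergence} by projecting the functional limit to the single time point $t=1$. Since $\lfloor n \cdot 1 \rfloor = n$, the marginal at $t=1$ of the rescaled process $(S_{\ntfloor}/\sigma_n : t\ge 0)$ is exactly the quantity $S_n/\sigma_n$ whose weak limit we want to identify. Evaluating the limit process $(\sqrt{t}(2\gamma+1)Z: t\ge0)$ at $t=1$ gives $(2\gamma+1)Z$, and since $Z$ is a standard Gaussian, $(2\gamma+1)Z \sim N(0,(2\gamma+1)^2)$, which is the desired limiting distribution.

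Concretely, I would invoke the continuous mapping theorem with the evaluation functional $\pi_1 \colon D([0,\infty)) \to \mathbb R$, $\pi_1(f) = f(1)$. This map is continuous (in the Skorohod topology on $D([0,\infty))$) at every $f$ that is continuous at $t=1$; see, e.g., \cite[Theorem~13.4.1]{Whitt2002}. The limit process $t \mapsto \sqrt{t}(2\gamma+1)Z$ has almost surely continuous sample paths (a deterministic continuous function of $t$ multiplied by the random scalar $Z$), so $\pi_1$ is continuous almost surely under the limiting law. The continuous mapping theorem then transfers the weak convergence in $(D([0,\infty)), \mathcal D)$ established in Theorem~\ref{Supercritical weak convergence} into the one-dimensional weak convergence of $S_n/\sigma_n$ to $(2\gamma+1)Z$, yielding the corollary.

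There is no essential obstacle here: the corollary is an immediate consequence of the functional CLT, once one observes that the continuity of the sample paths of the limit process guarantees that the evaluation at $t=1$ is a continuity point of the projection in the Skorohod sense. The only work is therefore bundled inside the proof of Theorem~\ref{Supercritical weak convergence} itself.
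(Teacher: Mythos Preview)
Your proposal is correct and matches the paper's approach: the paper simply states that the corollary follows immediately from Theorem~\ref{Supercritical weak convergence}, and your use of the continuous mapping theorem with the evaluation functional $\pi_1$ is exactly the standard way to make that passage rigorous. There is nothing to add.
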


\begin{remark}
    \label{rem: scales}
    At a first glance, it seems that the SRRW-RVM process has different scalings according to the three regimes. It has diffusive behavior ($\sqrt{n}$ scaling) in the subcritical regime given by Theorem~\ref{Invariance principle}, while it exhibits the superdiffusive weak convergence with $\sigma_n$ scaling in the critical regime with unbounded $\{v_n\}$, as given in Theorem~\ref{Supercritical weak convergence}. Further, the SRRW-RVM process scaled by $\{1/{(a_n \mu_n)}\}$ shows almost sure convergence for bounded $\{v_n\}$ -- see Theorem~\ref{Superdiffusive process convergence}. However, the asymptotic growth rates of $\sigma_n$ under the three regimes suggest that it is the scaling which works in all the regimes. 
    
    Further, using~\eqref{eq: cov kernel}, the limiting covariance kernel of the SRRW-RVM scaled by $\sigma_n$ in Theorem~\ref{Invariance principle} will be given by $(2(1-p)(\gamma+1)-1) \mathbb{E}\left(\mathcal{G}(s)\mathcal{G}(t)\right)$. This will satisfy, for a fixed memory sequence $\{\mu_n\}$,
        \begin{equation*}
    \lim_{p\uparrow p_c} (2(1-p)(\gamma+1)-1) \mathbb{E}\left(\mathcal{G}(s)\mathcal{G}(t)\right) = (2\gamma+1)^2 \sqrt{st},
    \end{equation*}
    the covariance kernel of the limiting process in the critical regime, if the memory sequence $\{\mu_n\}$ further corresponds to unbounded $\{v_n\}$. Thus, for a fixed memory sequence $\{\mu_n\}$ with unbounded $\{v_n\}$, there is  continuity from left at $p=p_c$ of the limiting covariance kernel obtained, when the SRRW-RVM process is scaled by $\sigma_n$. Needless to say that a similar observation also holds for the limiting variance $\varsigma^2$ in~\eqref{eq: lt v}:
    \[
    \lim_{p\uparrow p_c} (2(1-p)(\gamma+1)-1) \varsigma^2 = (2\gamma+1)^2.
    \]
 \end{remark}   
    Following the above observation, one can then ask, for a given memory sequence $\{\mu_n\}$ with unbounded $\{v_n\}$, whether the process limit of $\l ( S_{\floor{nt}} / \sigma_n: t\ge 0 \r)$, given by
    \[
    \widetilde{\mathcal G}^{(p)}(t):=\sqrt{2(1-p)(\gamma+1)-1}\mathcal G^{(p)}(t)
    \]
    converges to the limiting process in Theorem~\ref{Supercritical weak convergence} in the space $C[0,\infty)$, as $p\uparrow p_c$. It is answered in the next proposition. 
    
{\begin{proposition}{\label{prop: tightness for G at crit}}
The scaled process $\l(\widetilde{\mathcal G}^{(p)}(t): {t\ge 0}\r)$ is continuous in $p \in [0, p_c)$
under the topology of weak convergence, as processes in $C[0,\infty)$. 
Further, if the memory sequence $\{\mu_n\}$ is such that $\{v_n\}$ is unbounded for $p=p_c$, then, for a standard Gaussian random variable $Z$, as $p\uparrow p_c$,we have
\[
\l(\widetilde{\mathcal G}^{(p)}(t): {t\ge 0}\r) \to_{\rm{w}}
\l(\sqrt{t} (2\gamma+1) Z: {t\ge 0}\r) \quad \text{in $C([0,\infty))$.}
\]
\end{proposition}}

\begin{remark}
    The choice $\mu_n=\frac{\Gamma(n+\gamma)}{\Gamma(\gamma+1)\Gamma(n)}$ is the memory sequence considered in \cite{Laulin2022} to study a generalization of the ERW (which corresponds to $\gamma=0$), which they refer to as the Amnesic Elephant Random Walk (AERW). The SRRW-RVM with symmetric Rademacher innovations, recollection parameter $p$ and the above memory sequence is equivalent to the AERW with the corresponding recollection parameter $p'=(p+1)/{2}$. Then, with $p'\ge 1/2$, Theorems~2.1,~2.3 and~2.7 of \cite{Laulin2022} follow directly from Theorems~\ref{thm: SLLN},~\ref{Invariance principle} and~\ref{Superdiffusive process convergence} respectively. The case $p'\le 1/2$ corresponds to a negative reinforcement in our model (i.e.\ the incremental steps are given by $X_{n+1}=-\alpha_{n+1}X_{\beta_{n+1}}+(1-\alpha_{n+1})\xi_{n+1}$ in ~\eqref{RW definition}). Although our results are presented in the context of a positively reinforced step-reinforced random walk, we emphasize that the same techniques (see, Section~\ref{ERW} and Section~\ref{Functional CLT regime}) can be applied to obtain similar limit theorems (for e.g., Theorems~2.1,~2.3 and~2.7 of~\cite{Laulin2022}) for the negatively reinforced walk.
\end{remark}

\section{Proof of Main Results for SRRW-R-RVM}{\label{ERW}}
In this section we prove the main results given in Section~\ref{Oerview}. However, for simplicity and ease of understanding, we provide some of the proofs only for SRRW-R-RVM. The results which do not explicitly require the assumption of Rademacher innovations are clearly indicated.

\subsection{Laws of large numbers for the scaled SRRW-R-RVM}{\label{subsec: SLLN SRRW-R-RVM}}
We start with laws of large numbers, namely, Proposition~\ref{prop: SLLN} and Theorem~\ref{thm: SLLN}. The proofs are simple in case of SRRW-R-RVM. We begin with a lemma, whose proof only uses the fact that a Rademacher variable is $L^2$.

\begin{lemma}{\label{lemma: trunc mart conv ERW}}
When the innovations $\xi_n$ are symmetric Rademacher random variables, we have, for all $p\in [0,1)$,
${M_n}/({a_n \nu_n}) \to 0$ and ${L_n}/{n} \to 0$ almost surely and in $L^2$.
\end{lemma}
\begin{proof}
    Lemma~\ref{Variance of the increments} gives $\mathbb E X_n^2 = \mathbb E \xi_1^2 = 1$ for Rademacher innovations. Then, using~\eqref{martingale diff M_n} and~\eqref{eq: M mg transf}, we obtain
    $ \sum_{n=1}^\infty \mathbb E ( \Delta M_n )^2 / (n^2 a_n^2 \mu_n^2) = \sum_{n=1}^\infty \mathbb E ( \Delta L_n )^2 / {n^2} \le \sum_{n=1}^\infty \mathbb E X_n^2 / {n^2} = \sum_{n=1}^\infty 1/{n^2} < \infty$.
    So, 
    $\left\{\sum_{k=1}^n \Delta M_k/(k a_k \mu_k)\right\}$ and $\left\{\sum_{k=1}^n \Delta L_k/k\right\}$ 
    are $L^2$-bounded martingales, and converge almost surely and in $L^2$. The regularly varying sequence $\{na_n\mu_n\}$ of index $(1-p)(\gamma+1)$ diverges and Kronecker's lemma gives the results. 
\end{proof}

We are now ready to obtain the convergence of the linearly scaled location of SRRW-R-RVM.
\begin{proof}[Proof of Proposition~\ref{prop: SLLN} for SRRW-R-RVM]
It is immediate from Lemma~\ref{lemma: trunc mart conv ERW}, using~\eqref{conn L S}.
\end{proof}

Now we consider the convergence of the process $\left({S_{\lfloor nt\rfloor}/n}: t\geq 0\right)$ in $(D([0,\infty)),\mathcal{D})$.

\begin{proof}[Proof of Theorem~\ref{thm: SLLN} for SRRW-R-RVM]
Due to Proposition~2.3 of~\cite{Janson1994}, it is enough to show for any $T>0$, 
$S^*_{\lfloor nT \rfloor} /n \to 0$ 
in appropriate mode of convergence, where 
$S^*_K := \sup_{0\le k\le K} |S_k|$.

We first consider almost sure convergence. As $S_{\floor{nt}}=0$ for $0\le t<1/n$, we have for any $T>0$ and $t_0>0$,
    \[\frac{1}{n}\sup_{0\leq t\leq T}|S_{\lfloor nt\rfloor}|
        \leq \sup_{\frac1n\leq t\leq t_0} \left(t \frac{\l|S_{\lfloor nt\rfloor}\r|}{\lfloor nt\rfloor} \right)+\sup_{t_0\leq t\leq T}\left(t\frac{\l|S_{\lfloor nt\rfloor}\r|}{\lfloor nt\rfloor}\right) 
        \leq t_0 \sup_{l\ge1} \frac{|S_l|}{l} +T\sup_{l\geq \lfloor nt_0\rfloor}\frac{|S_l|}{l}.\] 
    Using Proposition~\ref{prop: SLLN}, the supremum in the first term is finite, while that in the second term is negligible as $n\to \infty$. Letting $t_0\downarrow 0$, we get the almost sure limit.

Next, we prove $L^2$ convergence for symmetric Rademacher innovations. Using~\eqref{conn L S} and careful applications of the Cauchy-Schwarz inequality in the first step and Doob's $L^2$ inequality in the next, we get
\begin{multline*}
    \frac1n \sqrt{ \mathbb{E} \left( S^*_{\lfloor nT \rfloor} \right)^2 }
    \le \frac1n \sqrt{ \mathbb{E} \left( \sup_{1\le l\le \lfloor nT \rfloor}  L_l^2 \right) }
    + \frac1n \sum_{k=1}^{\lfloor nT \rfloor-1} \sqrt{ \mathbb{E} \left( \frac{M_k}{a_k \nu_k} \right)^2 }\\
    \le 2 \sqrt{\frac1{n^2} \mathbb{E} L_{\lfloor nT \rfloor}^2}
    + \frac1n \sum_{k=1}^{\lfloor nT \rfloor-1} \sqrt{ \mathbb{E} \left( \frac{M_k}{a_k \nu_k} \right)^2 },
\end{multline*}
Then $S^*_{\lfloor nT \rfloor} /n \to 0$ in $L^2$ follows using Lemma~\ref{lemma: trunc mart conv ERW}.
\end{proof}

\begin{remark} \label{rem: as ERW}
    The proofs of Proposition~\ref{prop: SLLN} and Theorem~\ref{thm: SLLN} use only the finite variance structure of the innovation sequence through Lemma~\ref{lemma: trunc mart conv ERW}.
\end{remark}

\subsection{Almost sure limits of the scaled SRRW-RVM}{\label{subsec: a.s. L^2 ERW}} The proofs in this subsection do not use Rademacher structure of the innovation sequence. We obtain almost sure limits of the SRRW-RVM multiplied by ${a_n\mu_n}$, for bounded $\{v_n\}$. The limiting process is a random multiple of a power law function. Additionally, for the SRRW-R-RVM, the random multiple is non-Gaussian.

We start with the marginal convergence of $a_n \mu_n S_n$. 

\begin{proof}[Proof of Theorem \ref{a.s & L^2 convergence}]
We first obtain the almost sure and in $L^2$ limit of $a_n \mu_n S_n$. 
Using Proposition~\ref{Asymptotics of martingale M_n} and~\eqref{eq: M mg transf}, $M_n = \sum_{k=1}^n a_k \mu_k \Delta L_k$ converges to $M_\infty$ almost surely and in $L^2$. Further, for bounded $\{v_n\}$, we have $a_n \mu_n \to 0$. Then, Kronecker's lemma gives
\begin{equation}
    a_n \mu_n L_n = a_n \mu_n \sum_{k=1}^n \Delta L_k \to 0 \quad \text{almost surely and in $L^2$.} \label{eq: scaled L}
\end{equation}

From Karamata's theorem, we obtain
$\sum_{k=1}^{n-1} 1/{(a_k \nu_k)} \sim (\gamma+1)/\l((p(\gamma+1) - \gamma)a_n \mu_n\r) \to \infty$.
Then, an application of Toeplitz lemma to the martingale $\{M_n\}$ yields 
\begin{equation} \label{eq: scaled M}
p a_n \mu_n \sum_{k=1}^{n-1} \frac1{a_k \nu_k} M_k \to \frac{p(\gamma+1)}{p(\gamma+1) - \gamma} M_\infty \quad \text{almost surely and in $L^2$.}
\end{equation}
Combining~\eqref{eq: scaled L} and~\eqref{eq: scaled M}, the convergence of $a_n \mu_n S_n$ then follows from~\eqref{conn L S}.

We shall now show that $M_\infty$ is platykurtic -- and hence non-Gaussian -- when $\xi_1$ is symmetric Rademacher random variable. Note that, for Rademacher innovations, we have $\l|X_n-\bb{E}X_n|\mathcal{F}_{n-1}\r|\leq 2$ almost surely and hence
$\bb{E}M_n^4 \leq 8\l(\bb{E}M_{n-1}^4+\bb{E}(\Delta M_n)^4\r) \leq 8\sum_{k=1}^n\bb{E}(\Delta M_k)^4\leq 128\sum_{k=1}^\infty a_k^4\mu_k^4 < \infty$,
for bounded $\{v_n\}$. Thus, the martingale $\{M_n\}$ is $L^4$-bounded and converges to $M_\infty$ in $L^4$.

Note that $\{M_n\}_{n\geq 1}$ being mean zero martingale sequence, $\bb{E}M_\infty=0$. We study the kurtosis of $M_\infty$, given by $\kappa:={\bb{E}M_\infty^4}/{(\bb{E}M_\infty^2)^2}$. For $M_\infty$ to be platykurtic, it is enough to show 
\begin{equation} \label{eq: lepto}
3 (\bb{E}M_\infty^2)^2 - \bb{E}M_\infty^4 >0.
\end{equation} 

Towards proving~\eqref{eq: lepto}, for $x>0$ and $n\ge1$, define 
    $a_n(x)=\prod_{i=1}^{n-1}\l(1+{x\mu_{i+1}}/{\nu_i}\r)^{-1}$.
Note that $a_n(p)\equiv a_n$ and, for all $k\in\mathbb N$, $a_n(kp) \sim a_n^k$ as $n\to\infty$.

Due to Lemma~\ref{Variance of the increments}, $X_n$ also has symmetric Rademacher distribution, giving $X_n^2=1$ and $\mathbb{E} X_n =0$. Then, the sequence $\{Y_n\}_{n\geq 1}$ from \eqref{contd prod} has zero mean. Hence, for $n\ge 1$, we have
$\bb{E}Y_n^3 =\l(1+{3p\mu_n}/{\nu_{n-1}}\r)\bb{E}Y_{n-1}^3$,
giving $\bb{E}Y_n^3=0$. Similarly, the second and fourth moments of $Y_n$ satisfy:
        \[\bb{E}Y_n^2=\l(1+\frac{2p\mu_n}{\nu_{n-1}}\r)\bb{E}Y_{n-1}^2+\mu_n^2, \quad
        \text{and} \quad
        \bb{E}Y_n^4 =\l(1+\frac{4p\mu_n}{\nu_{n-1}}\r)\bb{E}Y_{n-1}^4+6\mu_n^2\l(1+\frac{2p\mu_n}{3\nu_{n-1}}\r)\bb{E}Y_{n-1}^2+\mu_n^4.\]
    Therefore, from the above quantities, we have
    \[
    b_n=\l(1+\frac{4p\mu_n}{\nu_{n-1}}\r)b_{n-1}+\frac{12p^2\mu_n^2}{\nu_{n-1}^2}\l(\bb{E}Y_{n-1}^2\r)^2+\frac{8p\mu_n^3}{\nu_{n-1}}\bb{E}Y_{n-1}^2+2\mu_n^4,
    \]
    where $b_n=3(\bb{E}Y_n^2)^2-\bb{E}Y_n^4$. Solving the recursion, we obtain
    \[   
    a_n(4p)b_n = 12p^2\sum_{k=2}^n\frac{a_k(4p)\mu_k^2}{\nu_{k-1}^2}\l(\bb{E}Y_{k-1}^2\r)^2 + 8p\sum_{k=2}^n\frac{a_k(4p)\mu_k^3}{\nu_{k-1}}\bb{E}Y_{k-1}^2+ 2\sum_{k=1}^na_k(4p)\mu_k^4.
    \]
As $a_n(4p)\sim a_n^4$, taking limits, we get
    \begin{equation*}
    3(\bb{E}M_\infty^2)^2-\bb{E}M_\infty^4 = 2\sum_{n=1}^\infty a_n(4p)\mu_n^4+8p\sum_{n=2}^\infty\frac{a_n(4p)\mu_n^3}{\nu_{n-1}}\bb{E}Y_{n-1}^2+12p^2\sum_{n=2}^\infty\frac{a_n(4p)\mu_n^2}{\nu_{n-1}^2}\bb{E}Y_{n-1}^2,
    \end{equation*}
    which gives~\eqref{eq: lepto}.
\end{proof}

Next we establish the almost sure and in $L^2$ process convergence of SRRW-RVM.
\begin{proof}[Proof of Theorem \ref{Superdiffusive process convergence}]
Again, by Proposition~2.3 of~\cite{Janson1994}, it is enough to show for any $T>0$,
\begin{equation} \label{eq: target as}
\sup_{0\le t\le T} \left| a_n \mu_n S_{\lfloor nt \rfloor} - \frac{p(\gamma+1)}{p(\gamma+1)-\gamma} t^{p(\gamma+1)-\gamma} M_\infty \right| \to 0 \quad \text{almost surely and in $L^2$.}
\end{equation}

We first show the almost sure convergence of~\eqref{eq: target as}. Fix $T>0$. The proof is similar to the almost sure case in Theorem~\ref{thm: SLLN} and we only sketch it. For any $0<t_0<T$, we have
\begin{multline*}
    \sup_{0\le t\le T} \left| a_n \mu_n S_{\lfloor nt \rfloor} - \frac{p(\gamma+1)}{p(\gamma+1)-\gamma} t^{p(\gamma+1)-\gamma} M_\infty \right| \\ 
    \le
    \sup_{0\le t\le T} \left| \frac{a_n \mu_n}{a_{\lfloor nt \rfloor} \mu_{\lfloor nt \rfloor}} - t^{p(\gamma+1)-\gamma} \right| \sup_{l\ge 1} \left| a_l \mu_l S_l \right|
     + t_0^{p(\gamma+1)-\gamma} \left( \sup_{l\ge 1} \left| a_l \mu_l S_l \right| + \frac{p(\gamma+1)}{p(\gamma+1)-\gamma} \left|M_\infty\right| \right) \\
    + T^{p(\gamma+1)-\gamma} \sup_{l\ge \lfloor nt_0 \rfloor} \left| a_l \mu_l S_l - \frac{p(\gamma+1)}{p(\gamma+1)-\gamma} M_\infty \right|.
\end{multline*}
 Since $\{{a_n\mu_n}\}\in RV_{\gamma-p(\gamma+1)}$ and $p(\gamma+1)>\gamma$ for $p\ge p_c$, the first term on the right side above converges to $0$ almost surely using Proposition~0.5 of~\cite{Resnick1987} and almost sure convergence of $a_n \mu_n S_n$. The third term on the right side above converges to $0$ almost surely by Theorem~\ref{a.s & L^2 convergence}. The second factor of the second term on the right side above is finite almost surely. Then, first letting $n\to\infty$ and then letting $t_0\to 0$, we get the required almost sure process convergence.

For $L^2$ process convergence, we use the decomposition~\eqref{conn L S} to obtain
\begin{multline}
    \sup_{0\le t\le T} \left| a_n \mu_n S_{\lfloor nt \rfloor} - \frac{p(\gamma+1) t^{p(\gamma+1)-\gamma}}{p(\gamma+1)-\gamma}  M_\infty \right| \\
    \le a_n \mu_n \sup_{0\le t\le T} \left| L_{\lfloor nt \rfloor} \right| + a_n \mu_n \sum_{k=1}^{\lfloor nT \rfloor -1} \frac1{a_k \nu_k} \left| M_k - M_\infty \right|\\
    + |M_\infty| \sup_{0\le t\le T} \left| a_n \mu_n \sum_{k=1}^{\lfloor nt \rfloor -1} \frac{1}{a_k \nu_k} - \frac{\gamma+1}{p(\gamma+1)-\gamma} t^{p(\gamma+1) - \gamma} \right|. \label{eq: L2 proc}
\end{multline}
Then, it is enough to show all three terms on the right side of~\eqref{eq: L2 proc} are $L^2$-negligible.

 The first term is negligible by Doob's $L^2$-inequality, $\bb E (\Delta L_n)^2 \le \bb E X_n^2 = \bb E \xi_1^2 = 1$, and by Karamata's theorem, as
\begin{multline*}
a_{\lfloor nT \rfloor}^2 \mu_{\lfloor nT \rfloor}^2 \mathbb{E} \left( \sup_{0\le t\le T} L_{\lfloor nt \rfloor}^2 \right) \le 4 a_{\lfloor nT \rfloor}^2 \mu_{\lfloor nT \rfloor}^2 \mathbb{E} L_{\lfloor nT \rfloor}^2 
\le \floor{nT} a_{\lfloor nT \rfloor}^2 \mu_{\lfloor nT \rfloor}^2 \\
\sim 4(1-2(1-p)(\gamma+1)) \sum_{k=\floor{nT}}^\infty a_k^2 \mu_k^2 \to 0.
\end{multline*}
Using $L^2$ convergence of the martingale $\{M_n\}$ from Proposition~\ref{Asymptotics of martingale M_n} and Toeplitz lemma, we get $L^2$-negligibility of the second term on the right side of~\eqref{eq: L2 proc}.

Finally, observe that,
\begin{multline*}
    \sup_{0\le t\le T} \left| a_n \mu_n \sum_{k=1}^{\lfloor nt \rfloor -1} \frac{1}{a_k \nu_k} - \frac{\gamma+1}{p(\gamma+1)-\gamma} t^{p(\gamma+1) - \gamma} \right|\\ 
     \le a_n \mu_n \sum_{k=1}^{n-1} \frac{1}{a_k \nu_k} \sup_{0\le t\le T} \left| \frac{\sum_{k=1}^{\lfloor nt \rfloor -1} \frac{1}{a_k \nu_k}}{\sum_{k=1}^{n -1} \frac{1}{a_k \nu_k}} - t^{p(\gamma+1)-\gamma} \right| \\
     + T^{p(\gamma+1)-\gamma} \left| a_n \mu_n \sum_{k=1}^{n-1} \frac{1}{a_k \nu_k} - \frac{\gamma+1}{p(\gamma+1)-\gamma} \right|.
\end{multline*}
Further, using {Proposition 0.5 of~\cite{Resnick1987}}, the third term on the right side of~\eqref{eq: L2 proc} goes to $0$ in $L^2$, as needed.
\end{proof}
\begin{remark}
 The proofs of Theorem~\ref{a.s & L^2 convergence} and Theorem~\ref{Superdiffusive process convergence} require only finite variance of that the innovations.
\end{remark} 
\subsection{Two martingales for the scaled SRRW-RVM process}{\label{Functional CLT regime}}
 In this subsection, we introduce two martingales important in studying the SRRW-RVM process for unbounded  $\{v_n\}$. The analysis is split into four subcases: namely $p\in[0, \widehat p)$, $p = \widehat p$, $p\in (\widehat p, p_c)$ and finally $p=p_c$. In the first three subcases, $\{v_n\}$ is unbounded, while, for $p=p_c$, we consider only those $\{\mu_n\}$ such that $\{v_n\}$ is unbounded. As noted in Remark~\ref{rem: scales}, we can use the scale $\sigma_n$ for all four subcases. However, we shall use the equivalent diffusive $\sqrt{n}$ scale in the first three subcases as it yields to convenient computation. In the fourth subcase, the scale is not explicit, but for interesting illustrative examples in Section~\ref{sec: examples}, we shall simplify $\sigma_n$ and provide more explicit rates.

As noted earlier, the decomposition~\eqref{conn L S} plays an important role in the analysis, but it needs to be further simplified before it is applied. Using~\eqref{martingale diff M_n}~--~\eqref{eq: M mg transf}, we have
\begin{align}
    S_n &= L_n + p \sum_{k=1}^{n-1} \frac1{a_k \nu_k} M_k = L_n + p \sum_{k=1}^{n-1} \frac1{a_k \nu_k} \sum_{l=1}^k \Delta M_l = L_n + p \sum_{l=1}^{n-1} \sum_{k=l}^{n-1} \frac1{a_k \nu_k} \Delta M_l\nonumber\\
    &= \begin{cases}
        L_n + p \sum_{l=1}^{n} (\eta_n - \eta_l) \Delta M_l, &\text{when $\sum_n \frac1{a_n \nu_n} = \infty$,}\\
        L_n + p \sum_{l=1}^{n} (\overline\eta_l - \overline\eta_n) \Delta M_l, &\text{when $\sum_n \frac1{a_n \nu_n} < \infty$,} 
        \end{cases} \nonumber\\
    &= \begin{cases}
        \sum_{l=1}^{n} (1 - p a_l \mu_l \eta_l) \Delta L_l + p \eta_n M_n, &\text{when $\sum_n \frac1{a_n \nu_n} = \infty$,}\\
        \sum_{l=1}^{n} (1 + p a_l \mu_l \overline \eta_l) \Delta L_l - p \overline \eta_n M_n, &\text{when $\sum_n \frac1{a_n \nu_n} < \infty$,}
       \end{cases} \label{eq: L eta}\\
    &= \begin{cases}
        N_n + p \eta_n M_n, &\text{when $\sum_n \frac1{a_n \nu_n} = \infty$,}\\
        \overline N_n - p \overline \eta_n M_n, &\text{when $\sum_n \frac1{a_n \nu_n} < \infty$,}
    \end{cases} \label{eq: S N split}
\end{align}
where, we define,
\begin{align}
    \eta_n &= \sum_{l=1}^{n-1} \frac1{a_l \nu_l} \in RV_{p(\gamma+1)-\gamma}, 
    &N_n &= \sum_{l=1}^{n} (1 - p a_l \mu_l \eta_l) \Delta L_l &\text{when } \sum_n \frac1{a_n \nu_n} &= \infty, \label{eq: N}\\
    \text{and} \quad \overline \eta_n &= \sum_{l=n}^{\infty} \frac1{a_l \nu_l} \in RV_{p(\gamma+1)-\gamma}, 
    &\overline N_n &= \sum_{l=1}^{n} (1 + p a_l \mu_l \overline \eta_l) \Delta L_l &\text{when } \sum_n \frac1{a_n \nu_n} &< \infty. \nonumber
\end{align}
Thus, for the case $\sum_n 1/({a_n \nu_n}) = \infty$, we consider the joint weak convergence of $(N_n, \eta_n M_n) / {\sqrt{n}}$, while for the case $\sum_n 1 / ({a_n \nu_n}) < \infty$, we consider the joint weak convergence of $(\overline N_n, \overline \eta_n M_n) / {\sqrt{n}}$, and then an appropriate linear transformation would give the weak convergence of $S_n/\sqrt{n}$. For the case $p=\widehat p$, the proof is similar, but more subtle -- see Remark~\ref{rem: triang}. It is to be noted that for $p\in(\widehat p, p_c)$, we have $\sum_n 1/({a_n \nu_n}) = \infty$ and was considered by~\cite{bertenghi2024universal}. For $p\in[0, \widehat p)$, we have $\sum_n 1/({a_n \nu_n}) < \infty$, but the process convergence requires more careful analysis of tightness near $0$.  

\begin{remark} \label{rem: triang}
Using Karamata's theorem for the case $p=\widehat p$, in~\eqref{eq: L eta}, the term $-p \sum_{l=1}^n a_l \mu_l \eta_l \Delta L_l$ (respectively, $p \sum_{l=1}^n a_l \mu_l \overline \eta_l \Delta L_l$) dominates, but cancels out with the term $p \eta_n M_n = p \eta_n \sum_{l=1}^n a_l \mu_l \Delta L_l$ (respectively, $- p \overline{\eta}_n M_n = - p \overline{\eta}_n \sum_{l=1}^n a_l \mu_l \Delta L_l$), leaving out terms which are diffusive in growth. This motivates a more careful analysis using triangular scaling matrices. See Section~\ref{subcrit reg}, p.~\pageref{triang scale pg}.
\end{remark}
\subsection{Subcritical regime for SRRW-R-RVM}{\label{subcrit reg}}  In the subcritical regime, both terms in~\eqref{eq: S N split} contribute. We obtain functional martingale central limit theorems for the triangular array of bivariate martingale difference sequence for each of the cases $p\in[0, \widehat{p})$, $p=\widehat{p}$ and $p\in(\widehat{p}, p_c)$, using the Corollary to Theorem~2 of~\cite{Touati1992}. While the martingale difference arrays are defined differently across the cases, their quadratic variation processes are computed and the Lindeberg conditions checked, in a unified way.

For $p\in[0,\widehat{p})$, we use the diagonal scaling matrix
$\overline{\boldsymbol{V}}_n
        = n^{-1/2} \diag(1, {\overline{\eta}_n})$ for $n\ge 1$,
and the corresponding triangular array of bivariate martingale difference sequence is, for $n\ge 1$ and $1\le k\le n$,
\begin{equation*} 
\Delta\overline{\boldsymbol{T}}'_{n,k}:=\overline{\boldsymbol{V}}_n(\Delta \overline{N}_k,\Delta M_k)' = \frac1{\sqrt{n}} \left( (1+p a_k \mu_k \oeta_k), a_k \mu_k \oeta_n \right)' \Delta L_k.
\end{equation*}
Correspondingly, for $p\in(\widehat p, p_c)$, as in~\cite{bertenghi2024universal}, we use the diagonal scaling matrix
$\boldsymbol{V}_n
        = n^{-1/2} \diag(1, {\eta_n})$ for $n\ge 1$,
and the corresponding triangular array of bivariate martingale difference sequence is, for $n\ge 1$ and $1\le k\le n$,
\begin{equation*} 
\Delta\boldsymbol{T}'_{n,k}:=\boldsymbol{V}_n(\Delta N_k, \Delta M_k)' = \frac1{\sqrt{n}} \left( (1-p a_k \mu_k \eta_k), a_k \mu_k \eta_n \right)' \Delta L_k.
\end{equation*}
 
For $p=\widehat{p}$, we need an upper triangular scaling matrix, as suggested in Remark~\ref{rem: triang}, given by, \label{triang scale pg}
\begin{align*}
\widetilde{\boldsymbol{V}}_n &= \frac{1}{\sqrt{n}} \begin{pmatrix}
    1 & {p\eta_n}\\
    0 & \frac{1}{a_n\mu_n}
\end{pmatrix}, & \text{when $\sum_n \frac1{a_n \nu_n} = \infty$,} \\ \intertext{and}
\widetilde{\boldsymbol{V}}_n &= \frac{1}{\sqrt{n}} \begin{pmatrix}
    1 & {-p\overline\eta_n}\\
    0 & \frac{1}{a_n\mu_n}
\end{pmatrix}, & \text{when $\sum_n \frac1{a_n \nu_n} < \infty$,}
\end{align*}
for $n\ge 1$, and the corresponding triangular array of bivariate martingale difference sequence is, for $n\ge 1$ and $1\le k\le n$,
\begin{equation*} 
\Delta\widetilde{\boldsymbol{T}}'_{n,k} :=
\begin{cases}
\widetilde{\boldsymbol{V}}_n(\Delta N_k, \Delta M_k)' = \frac1{\sqrt{n}} \left( \l(1+p a_k \mu_k(\eta_n-\eta_k)\r), \frac{a_k \mu_k}{a_n\mu_n}  \right)' \Delta L_k, &\text{when } \sum_n \frac1{a_n \nu_n} = \infty,\\
\widetilde{\boldsymbol{V}}_n(\Delta \overline{N}_k, \Delta M_k)' = \frac1{\sqrt{n}} \left( \l(1+p a_k \mu_k(\overline\eta_k-\overline\eta_n)\r), \frac{a_k \mu_k}{a_n\mu_n}  \right)' \Delta L_k, &\text{when } \sum_n \frac1{a_n \nu_n} < \infty.
\end{cases}
\end{equation*}

\begin{remark}
    While it seems that $\Delta\widetilde{\boldsymbol{T}}'_{n,k}$ have two distinct forms depending on the summability of $(a_n \nu_n)^{-1}$, actually they lead to one single expression, as both $(\eta_n - \eta_k)$ when $\sum_n 1/(a_n \nu_n) = \infty$ and $(\overline\eta_n - \overline\eta_k)$ when $\sum_n 1/(a_n \nu_n) < \infty$ lead to same quantity $\sum_{l=k}^{n-1} 1/(a_l \nu_l)$.
\end{remark}

\begin{remark}
While the second diagonal entries of $\boldsymbol{V}_n$ and $\overline{\boldsymbol{V}}_n$ seem to differ from that of $\widetilde{\boldsymbol{V}}_n$, using Karamata's theorem, the second diagonal entries of $\boldsymbol{V}_n$ and $\overline{\boldsymbol{V}}_n$ have the same order as that of $\widetilde{\boldsymbol{V}}_n$. The alternate form helps in calculation.
\end{remark} 

The following result provides the limiting quadratic variation process in each case. We state the result for SRRW-RVM, but prove here only for SRRW-R-RVM, leaving the general proof for Section~\ref{sec: SRRW}.
\begin{lemma}{\label{lem: QV}}
    Let $\{S_n\}_{n\geq 0}$ be the SRRW-RVM, with mean zero and unit variance innovation sequence. Then we have:
    \begin{enumerate}
        \item \label{sub hat} For $p\in [0, \widehat p)$, the quadratic variation process for each $t\ge0$, satisfies
        \[
        \langle \omT \rangle_{n, \ntfloor}
        \to_{L^1} \boldsymbol{\overline{W}}(t) := \frac{1}{( p(\gamma+1)-\gamma )^2}\begin{pmatrix}
        \gamma^2t & \frac{\gamma t^{(1-p)(\gamma+1)}}{1-p}\\
        \frac{\gamma t^{(1-p)(\gamma+1)}}{1-p} & \frac{(\gamma+1)^2t^{2(1-p)(\gamma+1)-1}}{2(1-p)(\gamma+1)-1}
    \end{pmatrix},
        \]
        with $\overline{\boldsymbol{W}}(t)$ positive definite for $t>0$, $\overline{\boldsymbol{W}}(0)=\boldsymbol{0}$. 
        \item \label{super hat} For $p\in(\widehat p, p_c)$, the quadratic variation process for each $t\ge 0$, satisfies
        \[
        \langle \mT \rangle_{n, \ntfloor}
        \to_{L^1}\boldsymbol{W}(t):=\frac{1}{(p(\gamma+1)-\gamma)^2}
        \begin{pmatrix}
        \gamma^2t & -\frac{\gamma t^{(1-p)(\gamma+1)}}{1-p}\\
        -\frac{\gamma t^{(1-p)(\gamma+1)}}{1-p} & \frac{(\gamma+1)^2t^{2(1-p)(\gamma+1)-1}}{2(1-p)(\gamma+1)-1}
    \end{pmatrix},
        \]
        with $\boldsymbol{W}(t)$ positive definite for $t>0$, $\boldsymbol{W}(0)=\boldsymbol{0}$.
        \item \label{hat}  For $p=\widehat p$, the quadratic variation process for each $t\ge 0$, satisfies
        \[
        \langle \tmT \rangle_{n, \ntfloor}
        \to_{L^1} \widetilde{\boldsymbol{W}}(t),\] where $\boldsymbol{\widetilde{W}}(0)=\boldsymbol{0}$, and for $t>0$,$\boldsymbol{\widetilde{W}}(t)$ is a positive definite matrix given by
        \[ \tmW(t):=t\begin{pmatrix}
        \gamma^2+(\gamma+1-\gamma\log t)^2 & \gamma+1-\gamma\log t\\
        \gamma+1-\gamma\log t & 1
    \end{pmatrix}.
        \]
    \end{enumerate}
\end{lemma}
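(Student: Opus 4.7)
The plan is to write the predictable quadratic variation as $\langle\omT\rangle_{n,\ntfloor} = \frac{1}{n}\sum_{k=1}^{\ntfloor} c_k B^{(n,k)}$, where $c_k := \mathbb{E}((\Delta L_k)^2\mid\mathcal{F}_{k-1})$ is random and $B^{(n,k)}$ is the deterministic rank-one $2\times 2$ matrix whose entries are $(1+pa_k\mu_k\oeta_k)^2$, $(1+pa_k\mu_k\oeta_k)a_k\mu_k\oeta_n$ and $a_k^2\mu_k^2\oeta_n^2$, with analogous expressions arising from $\Delta\mT_{n,k}$ and $\Delta\tmT_{n,k}$ in the other two parameter regimes. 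First I would establish $\mathbb{E}|c_k-1|\to 0$ by decomposing $c_k = \mathbb{E}(X_k^2\mid\mathcal{F}_{k-1}) - (\mathbb{E}(X_k\mid\mathcal{F}_{k-1}))^2$. The first term tends to $1$ in $L^1$ by Lemma~\ref{lem: cond second mom}, and the second equals $p^2 M_{k-1}^2/(a_{k-1}\nu_{k-1})^2$, which is $L^1$-negligible by Lemma~\ref{lem: conv M} combined with $L^2$ boundedness of $\{M_n/(a_n\nu_n)\}$ under the finite second moment assumption. A Toeplitz/Cesaro argument then reduces the $L^1$ convergence of $\langle\omT\rangle_{n,\ntfloor}$ to the \emph{deterministic} convergence of $\frac{1}{n}\sum_{k=1}^{\ntfloor} B^{(n,k)}$, since the entry-wise partial sums of $B^{(n,k)}/n$ remain bounded uniformly in $n$.

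Next I would evaluate the deterministic entry-wise limits in the parallel cases $p\in[0,\hp)$ and $p\in(\hp,p_c)$. For $p\in[0,\hp)$, Karamata's tail-sum theorem applied to $\{1/(a_l\nu_l)\}\in RV_{-(1-p)(\gamma+1)}$ (index strictly less than $-1$) together with $\mu_n/\nu_n\sim(\gamma+1)/n$ gives $a_n\mu_n\oeta_n\to(\gamma+1)/(\gamma-p(\gamma+1))$, hence $(1+pa_k\mu_k\oeta_k)\to\gamma/(\gamma-p(\gamma+1))$. Combined with the scaling $a_{\ntfloor}\mu_{\ntfloor}/(a_n\mu_n)\to t^{\gamma-p(\gamma+1)}$ and Karamata's partial-sum theorem applied to $\{a_k\mu_k\}\in RV_{\gamma-p(\gamma+1)}$ and $\{a_k^2\mu_k^2\}\in RV_{2(\gamma-p(\gamma+1))}$ (both indices exceeding $-1$), the three entries of $\frac{1}{n}\sum_{k=1}^{\ntfloor} B^{(n,k)}$ converge to the claimed entries of $\oW(t)$. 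The case $p\in(\hp,p_c)$ is analogous after swapping the tail sum $\oeta_n$ for the divergent partial sum $\eta_n$; the corresponding Karamata identity gives $\eta_n a_n\mu_n\to(\gamma+1)/(p(\gamma+1)-\gamma)$, with signs flipping so as to yield the $-\gamma t^{(1-p)(\gamma+1)}/(1-p)$ in the off-diagonal entry of $\mW(t)$.

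The main obstacle is the critical boundary $p=\hp$, where $a_n\nu_n\in RV_1$ and $a_n\mu_n\in RV_0$, so Karamata's theorem yields only slowly varying behaviour and logarithmic corrections appear. Here I would work directly with the triangular scaling $\tmV_n$: the $(2,2)$ entry $\frac{1}{n(a_n\mu_n)^2}\sum_{k=1}^{\ntfloor} a_k^2\mu_k^2$ converges to $t$ by Karamata applied to the slowly varying sequence $\{a_k^2\mu_k^2\}$, together with Potter's bounds giving $a_{\ntfloor}^2\mu_{\ntfloor}^2/(a_n\mu_n)^2\to 1$. The key new estimate is that, writing $k=\lfloor ns\rfloor$ with $s\in(0,t]$, one has $a_k\mu_k(\eta_n-\eta_k) = a_k\mu_k\sum_{l=k}^{n-1} 1/(a_l\nu_l)$ and, using $a_l\nu_l/l\sim a_l\mu_l/(\gamma+1)$ together with slow variation of $a_l\mu_l$, the right-hand side converges to $(\gamma+1)\log(1/s)$. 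A Riemann-sum approximation of $\frac{1}{n}\sum_{k=1}^{\ntfloor}(\cdots)^2$ and $\frac{1}{n}\sum_{k=1}^{\ntfloor}(\cdots)\cdot a_k\mu_k/(a_n\mu_n)$ then produces the terms $\gamma+1-\gamma\log t$ in the entries of $\tmW(t)$. Finally, positive definiteness of $\oW(t),\mW(t),\tmW(t)$ for $t>0$ follows either by direct determinant computation (for instance, $\det\oW(t)$ simplifies, via the identity $(1-p)(\gamma+1)-1=\gamma-p(\gamma+1)$, to $\gamma^2 t^{2(1-p)(\gamma+1)}/[(\gamma-p(\gamma+1))^2(1-p)^2(2(1-p)(\gamma+1)-1)]>0$), or by recognising each limit matrix as arising from a strict Cauchy-Schwarz inequality between non-proportional integrands. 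The equality $\oW(0)=\mW(0)=\tmW(0)=\mathbf{0}$ is immediate from the empty sums.
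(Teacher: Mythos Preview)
Your approach is essentially the same as the paper's: decompose the quadratic variation as a weighted sum of the deterministic matrices $\omQ_{n,k}$ (resp.\ $\mQ_{n,k}$, $\tmQ_{n,k}$) times the random factor $\mathbb{E}((\Delta L_k)^2\mid\mathcal{F}_{k-1})$, show this factor converges to $1$ in $L^1$ via Lemmas~\ref{lem: cond second mom} and~\ref{lem: conv M}, and reduce to the deterministic limit via the Cesaro-type Lemma~\ref{lem: cesaro}. The only notable difference is at the boundary $p=\hp$: where you propose a Riemann-sum argument based on the pointwise asymptotic $a_{\lfloor ns\rfloor}\mu_{\lfloor ns\rfloor}(\eta_n-\eta_{\lfloor ns\rfloor})\to -(\gamma+1)\log s$, the paper instead writes $\eta_n-\eta_k=(\eta_n-\eta_\ntfloor)+(\eta_\ntfloor-\eta_k)$, expands the square and product, and applies the precomputed asymptotics $\sum_{k=1}^m a_k^2\mu_k^2(\eta_m-\eta_k)^2\sim 2(\gamma+1)^2 m$, $\sum_{k=1}^m a_k^2\mu_k^2(\eta_m-\eta_k)\sim (\gamma+1)^2 a_m\nu_m$, etc., from Lemma~\ref{lem: rate sum}, together with Lemma~\ref{lem: tricky conv} for the cross-term $(\eta_n-\eta_\ntfloor)a_\ntfloor\mu_\ntfloor$. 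Your Riemann-sum route gives the correct integrals, but making it rigorous requires controlling the contribution from $s$ near $0$ (since $a_k\mu_k(\eta_n-\eta_k)$ diverges for $k$ fixed and $\eta_n$ slowly varying), and that tail control ends up using exactly these same ingredients; the paper's algebraic splitting is thus a cleaner packaging of the same computation rather than a genuinely different argument.
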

\begin{proof}[Proof for SRRW-R-RVM]
For the case~\eqref{sub hat} with $p\in[0,\hp)$, as $X_k^2=1$ for SRRW-R-RVM, the quadratic variation process is
\begin{equation} \label{eq: qv omT}
\langle \omT \rangle_{n, \ntfloor} = \sum_{k=1}^{\ntfloor} \omQ_{n,k} \mathbb{E}(X_k^2|\mathcal{F}_{k-1}) - p^2 \sum_{k=2}^{\ntfloor} \omQ_{n,k} \left( \frac{Y_{k-1}}{\nu_{k-1}} \right)^2 = \sum_{k=1}^{\ntfloor} \omQ_{n,k} - p^2 \sum_{k=2}^{\ntfloor} \omQ_{n,k} \left( \frac{Y_{k-1}}{\nu_{k-1}} \right)^2,
\end{equation}
where
\[
\omQ_{n,k} = \frac1n 
\begin{pmatrix}
    (1+p a_k \mu_k \oeta_k)^2 & a_k \mu_k (1+p a_k \mu_k \oeta_k) \oeta_n\\
    a_k \mu_k (1+p a_k \mu_k \oeta_k) \oeta_n & a_k^2 \mu_k^2 \oeta_n^2
\end{pmatrix}.
\]
Using Karamata's theorem, for each $t\ge 0$,
$\sum_{k=1}^\ntfloor \omQ_{n,k} \to \omW(t)$ holds and, for each fixed $k$, $\omQ_{n,k}$ converges to the zero matrix.
As $Y_n^2/\nu_n^2 \to 0$ in $L^1$ from Lemma~\ref{lemma: trunc mart conv ERW}, the last term of~\eqref{eq: qv omT} converges to the zero matrix by Toeplitz lemma.

Similarly, for the case~\eqref{super hat} corresponding to $p\in (\hp, p_c)$, the quadratic variation process is given by
\begin{equation*} 
\langle \mT \rangle_{n, \ntfloor} = \sum_{k=1}^{\ntfloor} \mQ_{n,k} \mathbb{E}(X_k^2|\mathcal{F}_{k-1}) - p^2 \sum_{k=2}^{\ntfloor} \mQ_{n,k} \left( \frac{Y_{k-1}}{\nu_{k-1}} \right)^2 = \sum_{k=1}^{\ntfloor} \mQ_{n,k} - p^2 \sum_{k=2}^{\ntfloor} \mQ_{n,k} \left( \frac{Y_{k-1}}{\nu_{k-1}} \right)^2,
\end{equation*}
where
\[
\mQ_{n,k} = \frac1n 
\begin{pmatrix}
    (1-p a_k \mu_k \eta_k)^2 & a_k \mu_k (1-p a_k \mu_k \eta_k) \eta_n\\
    a_k \mu_k (1-p a_k \mu_k \eta_k) \eta_n & a_k^2 \mu_k^2 \eta_n^2
\end{pmatrix}.
\]
The rest of the proof of the case~\eqref{super hat} under Rademacher innovations is similar to that of case~\eqref{sub hat} and is skipped.

Again, for the case~\eqref{hat} corresponding to $p=\hp$, the quadratic variation process is given by
\begin{equation} \label{eq: qv tmT}
\langle \tmT \rangle_{n, \ntfloor} = \sum_{k=1}^{\ntfloor} \tmQ_{n,k} \mathbb{E}(X_k^2|\mathcal{F}_{k-1}) - p^2 \sum_{k=2}^{\ntfloor} \tmQ_{n,k} \left( \frac{Y_{k-1}}{\nu_{k-1}} \right)^2 = \sum_{k=1}^{\ntfloor} \tmQ_{n,k} - p^2 \sum_{k=2}^{\ntfloor} \tmQ_{n,k} \left( \frac{Y_{k-1}}{\nu_{k-1}} \right)^2,
\end{equation}
where
\[
\tmQ_{n,k} = \frac1n 
\begin{pmatrix}
        (1+pa_k\mu_k(\eta_n-\eta_k))^2 & \l(a_k\mu_k(1+pa_k\mu_k(\eta_n-\eta_k))\r)/(a_n\mu_n)\\
        \l(a_k\mu_k(1+pa_k\mu_k(\eta_n-\eta_k))\r)/(a_n\mu_n) & \l(a_k^2\mu_k^2\r)/\l(a_n^2\mu_n^2\r)
\end{pmatrix}.
\]
We consider the limits of each element of $\sum_{k=1}^\ntfloor \tmQ_{n,k}$ separately. We note that
\begin{align*}
    \frac1n\sum_{k=1}^\ntfloor a_k^2 \mu_k^2 (\eta_n - \eta_k) &= \frac1n \sum_{k=1}^\ntfloor a_k^2 \mu_k^2 (\eta_\ntfloor - \eta_k) + \frac{\eta_n - \eta_\ntfloor}{n} v_\ntfloor^2,\\
    \frac1n\sum_{k=1}^\ntfloor a_k \mu_k (\eta_n - \eta_k) &= \frac1n \sum_{k=1}^\ntfloor a_k \mu_k (\eta_\ntfloor - \eta_k) + \frac{\eta_n - \eta_\ntfloor}{n} \sum_{k=1}^\ntfloor a_k \mu_k,
\end{align*}
and 
\begin{multline*}
    \frac1n\sum_{k=1}^\ntfloor a_k^2 \mu_k^2 (\eta_n - \eta_k)^2 = \frac1n \sum_{k=1}^\ntfloor a_k^2 \mu_k^2 (\eta_\ntfloor - \eta_k)^2 + 2 \frac{\eta_n - \eta_\ntfloor}{n} \sum_{k=1}^\ntfloor a_k \mu_k (\eta_\ntfloor - \eta_k) \\
    + \frac{(\eta_n - \eta_\ntfloor)^2}{n} v_\ntfloor^2.
    \end{multline*}
We then use the limits from Lemmas~\ref{lem: rate sum} and~\ref{lem: tricky conv} to conclude $\sum_{k=1}^\ntfloor \tmQ_{n,k} \to \tmW(t)$.  Further, for each fixed $k$, $\tmQ_{n,k}$ converges to the zero matrix. Finally, the limiting quadratic variation process for the case~\eqref{hat} under Rademacher innovations is obtained by arguing as in the case~\eqref{sub hat}.
\end{proof}

 \begin{remark}
    \label{rem: QV sub ERW}
    In the above proof of Lemma~\ref{lem: QV}, Rademacher distribution of the innovations are used only to apply $\mathbb{E}(X_k^2|\mathcal{F}_{k-1})=1$ in~\eqref{eq: qv omT}~--~\eqref{eq: qv tmT}.
\end{remark}

For the Lindeberg conditions, we obtain the following bounds for any zero mean, unit variance innovation sequence.
\begin{lemma} \label{lem: Linden bd ERW}
Let $\{S_n\}_{n\geq 0}$ be the SRRW-RVM with zero mean unit variance innovation sequence $\{\xi_n\}$. Then, for every $t>0$, there is a positive constant $C_t$ such that for all $n\ge 1$ and $1\le k\le \floor{nt}$, the following hold:
\begin{enumerate}
\item \label{sub hat Lind bd} When $p\in[0,\hp)$, we have $\| \Delta \overline T_{n,k} \| \le C_t |\Delta L_k| / \sqrt{n}$.
\item \label{super hat Lind bd} When $p\in(\hp, p_c)$, we have $\| \Delta T_{n,k} \| \le C_t |\Delta L_k| / n^\rho$, for some $\rho>0$.
\item \label{hat Lind bd} When $p=\hp$, we have $\| \Delta \widetilde T_{n,k} \| \le C_t |\Delta L_k| / n^{1/4}$.
\end{enumerate}
\end{lemma}
\begin{proof}
For the case~\eqref{sub hat Lind bd} corresponding to $p\in[0, \hp)$, since $\{\oeta_n\}$ is a nonincreasing sequence, we have, for $1\le k\le \ntfloor$,
\[
\| \Delta \omT_{n,k} \|^2 \le \frac{1}{n} \left[ (1+pa_k \mu_k \oeta_k)^2 + a_k^2 \mu_k^2 \oeta_k^2 \frac{\oeta_n^2}{\oeta_\ntfloor^2} \right] (\Delta L_k)^2,
\]
and the expression in the square bracket is bounded by a term, possibly dependent on $t$.

For the case~\eqref{super hat Lind bd} corresponding to $p\in(\hp, p_c)$, choose $\rho>0$ such that $0<\rho < \gamma + 1/2 -p(\gamma +1)$. Then, we have, for $1\le k\le \ntfloor$,
\begin{multline*}
\| \Delta \mT_{n,k} \|^2 = \frac1n \left[ (1 - pa_k\mu_k\eta_k)^2 +  a_k^2 \mu_k^2 \eta_n^2 \right] (\Delta L_k)^2 \\
= \frac1{n^{2\rho}} \left[ n^{2\rho-1} (1 - pa_k\mu_k\eta_k)^2 +  a_k^2 \mu_k^2 n^{2\rho-1} \eta_n^2 \right] (\Delta L_k)^2.
\end{multline*}
The expression in the square bracket is bounded, as $\{a_k \mu_k \eta_k\}$ and $\{a_k \mu_k\}$ are convergent and $n^{2\rho-1} \eta_n^2$ converges to $0$.

Finally, for the case~\eqref{hat Lind bd} corresponding to $p=\hp$, we have, for $1\le k\le \ntfloor$,
\begin{multline*}
\|\Delta \boldsymbol{\widetilde{T}}_{n,k}\|^2 \le \frac1n \left[ 2+ 2 a_k^2 \mu_k^2 \eta_n^2 + \frac{a_k^2\mu_k^2}{a_n^2\mu_n^2} \right] (\Delta L_k)^2 \\
\le \frac{1}{\sqrt{n}} \left[ \frac2{\sqrt{n}} +  t^{1/4} \frac{a_k^2 \mu_k^2}{k^{1/4}} \left( 2 \frac{\eta_n^2}{n^{1/4}} + \frac1{n^{1/4} a_n^2\mu_n^2} \right) \right] (\Delta L_k)^2. 
\end{multline*}
Again, the expression in the square bracket is bounded, as $\{a_n\mu_n\}$ and $\{\eta_n\}$ are slowly varying for $p=\hp$.
\end{proof}

The following lemma provides the Lindeberg conditions for any zero mean, unit variance innovation sequence, but we check it for SRRW-R-RVM using $|\Delta L_k| \le 2$ for Rademacher innovations and Lemma~\ref{lem: Linden bd ERW}. The proof for general zero mean, unit variance innovation sequence is given in Section~\ref{sec: SRRW}.
\begin{lemma} \label{lem: Lindeberg ERW}
Let $\{S_n\}_{n\geq 0}$ be the SRRW-RVM with zero mean, unit variance innovation sequence $\{\xi_n\}$. Then, for any $\epsilon>0$ and for every $t>0$, we have:
\begin{enumerate}
\item \label{sub hat Lind} For $p\in[0,\hp)$, we have
$\sum_{k=1}^{\lfloor nt\rfloor}\mathbb{E}\left(\|\Delta\overline{\boldsymbol{T}}_{n,k}\|^2\mathbbm{1}_{\{\|\Delta\overline{\boldsymbol{T}}_{n,k}\|>\epsilon\}}\middle|\mathcal{F}_{k-1}\right) \to_{L^1} 0$.
\item \label{super hat Lind} For $p\in(\hp, p_c)$, we have
$\sum_{k=1}^{\lfloor nt\rfloor}\mathbb{E}\left(\|\Delta\boldsymbol{T}_{n,k}\|^2\mathbbm{1}_{\{\|\Delta\boldsymbol{T}_{n,k}\|>\epsilon\}}\middle|\mathcal{F}_{k-1}\right) \to_{L^1} 0$.
\item \label{hat Lind} For $p=\hp$, we have
$\sum_{k=1}^{\lfloor nt\rfloor}\mathbb{E}\left(\| \Delta \widetilde{\boldsymbol{T}}_{n,k}\|^2\mathbbm{1}_{\{\|\Delta\widetilde{\boldsymbol{T}}_{n,k}\|>\epsilon\}}\middle|\mathcal{F}_{k-1}\right) \to_{L^1} 0$.
\end{enumerate}
\end{lemma}

 Using the limiting quadratic variation process from Lemma~\ref{lem: QV} and the Lindeberg condition from Lemma~\ref{lem: Lindeberg ERW}, we obtain 
the following functional martingale central limit theorem from the Corollary to Theorem~2 of~\cite{Touati1992}. As the limiting quadratic variation process and the Lindeberg condition are proved in Lemma~\ref{lem: QV} and Lemma~\ref{lem: Lindeberg ERW} respectively only for SRRW-R-RVM, the result holds for SRRW-R-RVM only.
\begin{proposition} {\label{neq gamma/gamma+1 joint invariance}}
Let $\{S_n\}_{n\geq 0}$ be the SRRW-RVM with zero mean, unit variance innovation sequence $\{\xi_n\}$. Then:
    \begin{align*}
        \l(\omT_{n,\lfloor nt\rfloor}:t\geq 0\r)&\to_{\rm{w}} \l(\mathcal{\overline{W}}(t): t\geq 0\r) \quad \text{in $(D([0,\infty)),\mathcal{D})$,} \quad \text{for $p\in[0,\hp)$},\\
        \l(\tmT_{n,\lfloor nt\rfloor}: t\geq 0\r)&\to_{\rm{w}} \l(\widetilde{\mathcal{W}}(t): t\geq 0\r) \quad \text{in $(D([0,\infty)),\mathcal{D})$,} \quad \text{for $p=\hp$},\\
     \l(\mT_{n,\lfloor nt\rfloor}: t\geq 0\r) &\to_{\rm{w}} \l(\mathcal{W}(t): t\geq 0\r) \quad \text{in $(D([0,\infty)),\mathcal{D})$,} \quad \text{for $p\in(\hp,p_c)$},
    \end{align*}
    where $(\mathcal{\overline{W}}(t) :t\geq 0)$, $(\widetilde{\mathcal{W}}(t) :t\geq 0)$ and $(\mathcal{W}(t) :t\geq 0)$ are continuous $\mathbb{R}^2$ valued centered Gaussian processes, with the covariance kernels given by
    \begin{align*}
        \mathbb{E}(\mathcal{\overline{W}}(s)\mathcal{\overline{W}}(t))&=\boldsymbol{\overline{W}}(s), \quad \text{for } 0\leq s\leq t, \quad \text{when $p\in[0,\hp)$},\\
    \mathbb{E}(\widetilde{\mathcal{W}}(s) \widetilde{\mathcal{W}}(t))&=\widetilde{\boldsymbol{W}}(s), \quad \text{for } 0\leq s\leq t, \quad \text{when $p=\hp$},\\
         \mathbb{E}(\mathcal{W}(s)\mathcal{W}(t))&=\boldsymbol{W}(s), \quad \text{for } 0\leq s\leq t, \quad \text{when $p\in(\hp, p_c)$}.
    \end{align*}
\end{proposition}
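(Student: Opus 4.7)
The proof proposal is essentially to assemble the ingredients already prepared and invoke a martingale functional central limit theorem. The plan is to apply the Corollary to Theorem~2 of~\cite{Touati1992}, which establishes weak convergence in $(D([0,\infty)), \mathcal{D})$ of a triangular array of square-integrable martingales to a continuous centered Gaussian process, provided (i) the predictable quadratic variation process converges in probability (or $L^1$) to a deterministic continuous matrix-valued function, and (ii) a Lindeberg-type conditional negligibility condition is satisfied.

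First, I would verify that each of $\{\omT_{n,k}\}_{k=1}^n$, $\{\tmT_{n,k}\}_{k=1}^n$, $\{\mT_{n,k}\}_{k=1}^n$ is, for each fixed $n$, a square-integrable martingale with respect to $\{\mathcal{F}_k\}$. This is immediate: the definitions~\eqref{eq: Delta omT},~\eqref{eq: Delta mT},~\eqref{eq: Delta tmT} express $\Delta\omT'_{n,k}$, $\Delta\mT'_{n,k}$, $\Delta\tmT'_{n,k}$ as deterministic scalings (depending only on $k$, $n$, and on $a_k, \mu_k, \eta_k, \oeta_k, \eta_n, \oeta_n$, $a_n\mu_n$) of $\Delta L_k$, which is an $\mathcal{F}_k$-measurable martingale difference from~\eqref{martingale L_n}. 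Square integrability follows from $\mathbb{E}\xi_1^2=1$ and Lemma~\ref{Variance of the increments}.

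Next, the two hypotheses of the Touati theorem are exactly what Lemmas~\ref{lem: QV} and~\ref{lem: Lindeberg} provide. Lemma~\ref{lem: QV} establishes, in each of the three regimes $p\in[0,\hp)$, $p=\hp$, $p\in(\hp, p_c)$, that $\langle \omT \rangle_{n,\ntfloor}$, $\langle \tmT \rangle_{n,\ntfloor}$, $\langle \mT \rangle_{n,\ntfloor}$ converge in $L^1$ (hence in probability) to the deterministic continuous matrix functions $\omW(t)$, $\tmW(t)$, $\mW(t)$ respectively. Lemma~\ref{lem: Lindeberg} verifies the conditional Lindeberg condition for each array. Invoking the Corollary to Theorem~2 of~\cite{Touati1992} then yields the claimed weak convergence of the processes to continuous $\mathbb{R}^2$-valued centered Gaussian processes.

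Finally, the covariance structure of the limit follows from the general fact that a continuous centered Gaussian martingale with deterministic quadratic variation $\boldsymbol{K}(t)$ has covariance kernel $\mathbb{E}(\mathcal{G}(s)\mathcal{G}(t)^\top) = \boldsymbol{K}(s\wedge t)$; positive definiteness of $\omW(t), \tmW(t), \mW(t)$ for $t>0$, as noted in Lemma~\ref{lem: QV}, ensures the limit is nondegenerate. The only minor subtlety will be the verification in the case $p=\hp$, where the scaling matrix $\tmV_n$ is triangular rather than diagonal; however, since the Touati framework applies to arbitrary matrix-valued normalizations of vector martingales, this causes no difficulty provided the QV and Lindeberg conditions are stated for the transformed array $\tmT_{n,k} = \tmV_n(N_k, M_k)'$, which is exactly how Lemmas~\ref{lem: QV}\eqref{hat} and~\ref{lem: Lindeberg}\eqref{hat Lind} have been set up. With all pieces in place, the proof is essentially a one-line appeal to the cited theorem, so the main work has already been absorbed into the preparatory lemmas; the only step that requires any care is checking that the three hypotheses line up correctly with the triangular scaling in the $p=\hp$ case.
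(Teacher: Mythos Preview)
Your proposal is correct and matches the paper's own approach exactly: the paper also states that the result follows directly from the Corollary to Theorem~2 of~\cite{Touati1992}, once the limiting quadratic variation process from Lemma~\ref{lem: QV} and the Lindeberg condition from Lemma~\ref{lem: Lindeberg} are in hand. The only difference is that the paper does not spell out the martingale property, square integrability, or the covariance identification, treating these as implicit in the cited framework.
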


 We prove Theorem~\ref{Invariance principle} using Proposition~\ref{neq gamma/gamma+1 joint invariance}. To prove convergence on $D((0, \infty))$ under Skorohod topology, using Proposition~2.3 of~\cite{Janson1994}, it is enough to establish the convergence in $D([1/T, T])$ for every $T>0$. To extend the convergence to $(D([0,\infty)),\mathcal{D})$, we prove the following uniform equicontinuity at $0$ in $L^2$, and hence in probability.
\begin{lemma}
    \label{lem: equi unif cont}
    Let $\{S_n\}_{n\geq 0}$ be the SRRW-RVM with zero mean, unit variance innovation sequence $\{\xi_n\}_{n\ge 1}$. Then, for all $p\in[0,p_c)$ and $\epsilon>0$, we have
$\lim_{t\rightarrow 0} \limsup_{n\rightarrow\infty} \mathbb{P} \left( S^*_{\lfloor nt\rfloor} > \epsilon \sqrt{n} \right) = 0$.
\end{lemma}
\begin{proof}
Using~\eqref{conn L S}, we have
\[
\left(S_\ntfloor^*\right)^2 \le 2 \max_{1\le k\le\ntfloor} L_k^2 + 2 \left( \sum_{k=1}^\ntfloor \frac{|M_k|}{a_k \nu_k} \right)^2 = 2 \max_{1\le k\le\ntfloor} L_k^2 + 2 \sum_{k=1}^\ntfloor \sum_{l=1}^\ntfloor \frac{|M_k| |M_l|}{a_k a_l \nu_k \nu_l}.
\]
Taking expectation, dividing by $n$ and using Doob's $L^2$ inequality on the first term and Cauchy-Schwarz inequality on the second term, we get
$$\mathbb{E} \Big( S_\ntfloor^* \Big)^2 / n\le 8  \mathbb{E} L_\ntfloor^2 / n + 2  \Big( \sum_{k=1}^\ntfloor {\sqrt{\mathbb{E} M_k^2}}/ (a_k \nu_k) \Big)^2 / n.$$
By Lemma~\ref{Variance of the increments}, the first term is bounded by $8t$,
while, plugging in the asymptotics of $\mathbb{E} M_k^2$ for $p\in[0, p_c)$ from Proposition~\ref{Asymptotics of martingale M_n}, the second term is bounded by a constant multiple of $t$.
Combining the result follows from Chebyshev's inequality.
\end{proof}
\begin{remark}
The above lemma holds for SRRW-RVM with zero mean, finite variance innovations.
It shows that $S^*_n/\sqrt{n}$ is $L^2$-bounded when $p\in[0,p_c)$. Compare with Theorem~\ref{thm: SLLN}, where $S^*_n/n$ is $L^2$-negligible for all $p\in[0,1]$.
\end{remark}

We are now ready to prove the weak convergence of the scaled SRRW-R-RVM process. The proof itself does not use Rademacher nature of the innovations, but the proofs of Lemmas~\ref{lem: QV} and~\ref{lem: Lindeberg ERW} do.
\begin{proof}[Proof of Theorem \ref{Invariance principle} for SRRW-R-RVM]
Consider a decomposition of $ S_\ntfloor/{\sqrt{n}}$ in each of the following cases:
\begin{align}
    \text{for $p\in[0,\widehat{p})$,}
    &\qquad\frac1{\sqrt{n}}{S_{\lfloor nt\rfloor}} =\left(1,-pt^{p(\gamma+1)-\gamma}\right)\overline{\boldsymbol{T}}'_{n, \lfloor nt\rfloor}-p\left(0,\frac{\overline{\eta}_{\lfloor nt\rfloor}}{\overline{\eta}_n}-t^{p(\gamma+1)-\gamma} \right) \overline{\boldsymbol{T}}'_{n, \lfloor nt\rfloor}, \label{eq: weak sub hat}\\
    \text{for $p\in(\widehat{p},p_c)$,}
    &\qquad \frac1{\sqrt{n}}{S_{\lfloor nt\rfloor}} = \left(1,pt^{p(\gamma+1)-\gamma}\right) \boldsymbol{T}'_{n, \lfloor nt\rfloor} + p \left(0, \frac{\eta_{\lfloor nt\rfloor}}{\eta_n} -t^{p(\gamma+1)-\gamma}\right) \boldsymbol{T}'_{n, \lfloor nt\rfloor}, \label{eq: weak super hat}\\
    \text{and for $p=\widehat{p}$,}
    &\qquad \frac1{\sqrt{n}}{S_{\lfloor nt\rfloor}} =(1,\gamma\log t) \widetilde{\boldsymbol{T}}'_{n, \floor{nt}}+(0,p(\eta_{\floor{nt}}-\eta_n)a_n\mu_n-\gamma\log t) \widetilde{\boldsymbol{T}}'_{n, \floor{nt}}.\label{eq: weak hat}
    \end{align}
    In each case, we show the process weak convergence of the first term and the negligibility of the second in probability. 

    Using Proposition~\ref{neq gamma/gamma+1 joint invariance}, the bivariate martingales in the decompositions~\eqref{eq: weak sub hat}~-~\eqref{eq: weak hat} converge to processes with continuous paths on $[0, \infty)$. Also note that the convergence of a sequence of r.c.l.l.\ functions to a continuous function in Skorohod metric is same as the convergence in the uniform metric. Further, for every $T>0$, pointwise multiplication by a function from $D([1/T, T])$ and supremum of functions in $D([1/T, T])$ are two transforms which are continuous under the uniform metric, and hence under Skorohod metric, at functions which are continuous on $[1/T, T]$.

     Note that, for each $T>0$, the multiplier functions for the first terms in the decompositions~\eqref{eq: weak sub hat}~-~\eqref{eq: weak hat} are continuous and hence bounded on $[1/T, T]$. Hence, for each $T>0$, the first terms in~\eqref{eq: weak sub hat}~-~\eqref{eq: weak hat} converge weakly in $D([1/T,T])$ under the  Skorohod topology
    to centered Gaussian processes with the covariance kernel given by~\eqref{eq: cov kernel}. The processes also have continuous paths on $[1/T, T]$ for every $T>0$ and, hence on $(0, \infty)$. Thus, the limiting process has same distribution as the process $(\mathcal{G}_t: t>0)$ given in the statement of Theorem~\ref{Invariance principle}.

    Next, note that, by Proposition~0.5 of~\cite{Resnick1987}, the multiplier functions of the second terms in the decompositions~\eqref{eq: weak sub hat} and~\eqref{eq: weak super hat} converge to $0$ uniformly on $[1/T,T]$, for every $T>0$. A similar result holds for the multiplier function of the second term of the decomposition~\eqref{eq: weak hat} due to Lemma~\ref{lem: tricky conv}. Also, for any $T>0$, the supremum of functions in $D([1/T,T])$ is continuous at continuous functions. Thus, the suprema over $[1/T,T]$ of the bivariate martingales in the decompositions~\eqref{eq: weak sub hat}~-~\eqref{eq: weak hat} converge weakly. As a result, multiplying the two factors from the second terms of the decompositions~\eqref{eq: weak sub hat}~-~\eqref{eq: weak hat}, we get the second terms to converge to the zero process under the uniform metric (and hence, under Skorohod metric) on $[1/T, T]$ in probability.

     Slutsky's theorem then gives us the required convergence in $D([1/T,T])$ under Skorohod topology for every $T>0$. The convergence is then extended to $D((0,\infty))$ under Skorohod topology using Proposition~2.3 of~\cite{Janson1994}. Finally, in view of Lemma~\ref{lem: equi unif cont}, the convergence is extended to $(D([0,\infty)),\mathcal{D})$ using Proposition~2.4 of~\cite{Janson1994}.
\end{proof}

\begin{remark} \label{rem: subcrit ERW}
    The above proof of Theorem~\ref{Invariance principle} use the Rademacher assumption only through Lemmas~\ref{lem: QV} and~\ref{lem: Lindeberg ERW}.
\end{remark}

 We end this subsection with a proof of the weak continuity of the limiting process, in the parameter $p$.
\begin{proof}[Proof of Proposition~\ref{prop: tightness in subcrit}]
From Remark~\ref{rem: cov cont}, we have $\mathcal G^{(p)} \to_{\mathrm{fdd}} \mathcal G^{(p_0)}$ as $p\to p_0\in [0, p_c)$. We show the tightness of $\{\mathcal G^{(p)}: p\le p_0\}$ for any $p_0<p_c$ using Kolmogorov's continuity criterion (pp.~474, Theorem~1.8 of~\cite{Revuz1999}). Since $\mathcal G^{(p)}(t)$ is a mean zero Gaussian process starting at $0$, we need to check the moment condition only. 
Then, using~\eqref{eq: pos corr} and Corollary~\ref{CLT in diffusive regime} we get
$\bb E(\mathcal G^{(p)}(t)-\mathcal G^{(p)}(s))^4 =3(\bb E(\mathcal G^{(p)}(t)-\mathcal G^{(p)}(s))^2)^2
\le 3 (\bb E ( \mathcal G^{(p)}(t) )^2-E ( \mathcal G^{(p)}(s) )^2)^2= 3 ( t-s )^2 \varsigma^4$.
 As $\varsigma^2 \equiv \varsigma^2(p)$ is increasing in $p\in [0,p_c)$ and $\varsigma^2(p_0)<\infty$ for every $p_0<p_c$, we have
the required result.
\end{proof}

\subsection{Weak limit in the critical regime with unbounded \texorpdfstring{$\{v_n\}$}{vn} for SRRW-R-RVM}{\label{subsec: critical weak limit}}
In the critical regime, for the sequences $\{\mu_n\}$ with unbounded $\{v_n\}$ at $p=p_c(>\hp)$, we use~\eqref{eq: N}. The contribution of the martingale $\{M_n\}$ is $\eta_n M_n$, which is of the order of $M_n/({a_n \mu_n})$. The variance of $M_n$ grows like $v_n\to\infty$. Thus, $\eta_nM_n$ should be scaled by $\sigma_n= v_n/(a_n \mu_n)$. The next lemma shows that the scale $\sigma_n$ kills the contribution of $N_n$. 

 \begin{lemma}{\label{prop: L^2 in prob lim of mg M}}
    Let $\{S_n\}_{n\geq 0}$ be the SRRW-RVM with zero mean, unit variance innovation sequence $\{\xi_n\}$. 
    Further assume that $p=p_c$ and the memory sequence $\{\mu_n\}$ is such that $\{v_n\}$ is unbounded. Then $N_n/\sigma_n\to 0$ in $L^2$. 
\end{lemma} 
\begin{proof}
    Using Lemmas~\ref{Variance of the increments} and~\ref{lemma: trunc mart conv ERW}, we have $\bb{E}(\Delta L_n)^2=\bb{E}X_n^2-p^2\mathbb{E}\l({Y_{n-1}}/{\nu_{n-1}}\r)^2\rightarrow1$. Further, using $p a_n \mu_n \eta_n \to 2\gamma +1$ in the critical case, we have
    $\bb{E}N_n^2 \sim 4{\gamma^2}n$.
    The result follows, as in this case $\sigma_n^2/n\to\infty$ due to Karamata's theorem.
\end{proof}

 \begin{remark} \label{rem: L2 Rad}
    The above proof uses the result for $L^2$ convergence in Lemma~\ref{lemma: trunc mart conv ERW}, which only requires finite variance innovations and not the Rademacher distribution.
\end{remark}
The weak limit of ${S_n}/{\sigma_n}$ is driven by that of ${\eta_nM_n}/{\sigma_n}$. We study the invariance principle for the triangular array of the martingale difference sequence
\begin{equation}{\label{eq: mg array in critical}}    
\Delta \widecap{M}_{n,k} = \frac1{v_n} \Delta M_k = \frac{a_k \mu_k}{v_n} \Delta L_k.
\end{equation}
 As in the subcritical case, we obtain the limiting quadratic variation and verify the conditional Lindeberg condition. The proof of the following Lemma is restricted to Rademacher innovations, as it uses $X_k^2=1$. The proof for general mean zero, unit variance innovations is provided in Section~\ref{sec: SRRW}. 
 \begin{lemma}{\label{prop: QV proc in the critical regime}}
    Let $\{S_n\}_{n\geq 0}$ be the SRRW-RVM with zero mean, unit variance innovation sequence $\{\xi_n\}$. Assume $p=p_c$ and $\{v_n\}$ is unbounded. Then the quadratic variation process of $\{\widecap{M}_{n,\floor{nt}}\}$ satisfies
    $\ang{\,\widecap{M}\,}_{n,\floor{nt}} \to 1$ in $L^1$, for all $t>0$.
\end{lemma}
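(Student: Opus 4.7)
I will expand the quadratic variation using the martingale transform relation~\eqref{eq: M mg transf} and the decomposition $\mathbb{E}((\Delta L_k)^2 \mid \mathcal F_{k-1}) = 1 + (\mathbb{E}((\Delta L_k)^2 \mid \mathcal F_{k-1}) - 1)$ to write
\begin{equation*}
\langle \widecap{M} \rangle_{n, \ntfloor} = \frac1{v_n^2} \sum_{k=1}^{\ntfloor} a_k^2 \mu_k^2 \, \mathbb{E}\left((\Delta L_k)^2 \mid \mathcal{F}_{k-1}\right) = \frac{v_\ntfloor^2}{v_n^2} + \frac1{v_n^2} \sum_{k=1}^{\ntfloor} a_k^2 \mu_k^2 \left[\mathbb{E}\left((\Delta L_k)^2 \mid \mathcal{F}_{k-1}\right) - 1\right],
\end{equation*}
and then prove separately that the first term tends to $1$ deterministically and that the second tends to $0$ in $L^1$.

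For the first term, at $p = p_c$ the sequence $\{a_n \mu_n\}$ is regularly varying of index $\gamma - p_c(\gamma+1) = -1/2$ by Lemma~\ref{lem: index}, so $\{a_n^2 \mu_n^2\} \in RV_{-1}$. The hypothesis that $\{v_n\}$ is unbounded is exactly $\sum_k a_k^2 \mu_k^2 = \infty$, so Karamata's theorem forces the partial sums $v_n^2 = \sum_{k\le n} a_k^2 \mu_k^2$ to be slowly varying. This immediately yields $v_\ntfloor^2/v_n^2 \to 1$ for every fixed $t > 0$.

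For the second term, I would use the identity $\mathbb{E}((\Delta L_k)^2 \mid \mathcal{F}_{k-1}) = \mathbb{E}(X_k^2 \mid \mathcal{F}_{k-1}) - p^2 (Y_{k-1}/\nu_{k-1})^2$, which follows from~\eqref{Conditional Expectation} and~\eqref{martingale L_n}. Lemma~\ref{lem: cond second mom} gives $\mathbb{E}(X_k^2 \mid \mathcal{F}_{k-1}) \to 1$ in $L^1$, while Lemma~\ref{lem: conv M} applied via the identity $Y_n/\nu_n = M_n/(a_n \nu_n)$ gives $Y_n/\nu_n \to 0$ in $L^2$, hence $(Y_{k-1}/\nu_{k-1})^2 \to 0$ in $L^1$. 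Combining, $\bigl|\mathbb{E}((\Delta L_k)^2 \mid \mathcal{F}_{k-1}) - 1\bigr| \to 0$ in $L^1$. A Cesaro/Toeplitz averaging (Lemma~\ref{lem: cesaro}) against the nonnegative weights $a_k^2 \mu_k^2$, whose partial sums diverge, then yields
\begin{equation*}
\frac1{v_\ntfloor^2} \sum_{k=1}^{\ntfloor} a_k^2 \mu_k^2 \, \mathbb{E}\bigl| \mathbb{E}\left((\Delta L_k)^2 \mid \mathcal{F}_{k-1}\right) - 1 \bigr| \to 0,
\end{equation*}
and multiplying by the bounded ratio $v_\ntfloor^2/v_n^2 \to 1$ gives the required $L^1$ (hence in-probability) convergence of the second term to zero.

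The only substantive point is the slow variation of $v_n^2$, which is the joint consequence of criticality (pinning the index of $\{a_n^2 \mu_n^2\}$ to exactly $-1$) and the unboundedness of $\{v_n\}$. Absent either of these, $v_\ntfloor^2/v_n^2$ would not converge to $1$ and the limit of the quadratic variation would acquire nontrivial $t$-dependence; this degeneracy is precisely what is reflected in the limit process $\sqrt{t}(2\gamma+1)Z$ of Theorem~\ref{Supercritical weak convergence}, which is perfectly correlated across $t$.
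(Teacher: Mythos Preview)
Your proof is correct and follows essentially the same approach as the paper's. The paper writes the quadratic variation as
\[
\ang{\,\widecap{M}\,}_{n,\floor{nt}} = \frac1{v_n^2} \sum_{k=1}^\ntfloor a_k^2 \mu_k^2 \,\mathbb{E}(X_k^2\mid\mathcal{F}_{k-1}) - \frac{p^2}{v_n^2} \sum_{k=1}^{\ntfloor-1} a_{k+1}^2 \mu_{k+1}^2 \left(\frac{M_k}{a_k\nu_k}\right)^2
\]
and invokes Lemmas~\ref{lem: cond second mom},~\ref{lem: conv M},~\ref{lem: rate sum} and~\ref{lem: cesaro} directly; you first collapse the two pieces into the single error $\mathbb{E}((\Delta L_k)^2\mid\mathcal{F}_{k-1})-1$ before invoking the same lemmas, and you derive $v_{\ntfloor}^2/v_n^2\to 1$ from slow variation rather than quoting it from Lemma~\ref{lem: rate sum}, but the underlying argument is identical.
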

\begin{proof}[Proof for SRRW-R-RVM]
    The quadratic variation process is given by, for all $t\ge0$,
    \begin{multline} \label{eq: qv cap M}
    \ang{\,\widecap{M}\,}_{n,\floor{nt}} = \frac1{v_n^2} \sum_{k=1}^\ntfloor a_k^2 \mu_k^2 \bb{E}\l(X_k^2|\mathcal{F}_{k-1}\r) -\frac{p^2}{v_n^2} \sum_{k=1}^{\ntfloor-1} a_{k+1}^2 \mu_{k+1}^2  \frac{M_{k}^2}{a_{k}^2\nu_{k}^2} \\
    = \frac1{v_n^2} \sum_{k=1}^\ntfloor a_k^2 \mu_k^2 -\frac{p^2}{v_n^2} \sum_{k=1}^{\ntfloor-1} a_{k+1}^2 \mu_{k+1}^2 \frac{M_{k}^2}{a_{k}^2\nu_{k}^2}.
    \end{multline}
Then Lemma~\ref{lemma: trunc mart conv ERW}, Karamata's theorem and Toeplitz lemma give the result.
\end{proof}

\begin{remark}
    \label{rem: qv crit ERW}
    The above proof of Lemma~\ref{prop: QV proc in the critical regime} uses only the $L^2$ convergence result from Lemma~\ref{lemma: trunc mart conv ERW}. As noted in Remark~\ref{rem: L2 Rad} that result requires Rademacher distribution only to apply $\bb E(X_k^2 | \mathcal F_{k-1}) = 1$ in~\eqref{eq: qv cap M}.
\end{remark}

The next result gives the Lindeberg condition.
\begin{lemma}{\label{lemma: cond lind cond in critical case}}
    Let $\{S_n\}_{n\geq 0}$ be the SRRW-RVM with zero mean, unit variance innovation sequence $\{\xi_n\}$.. Assume $p=p_c$ and $\{v_n\}$ is unbounded. Then, for all $t>0$, we have
    $$\sum_{k=1}^{\ntfloor} \bb{E} \Big( (\Delta\widecap{M}_{n,k})^2\mathbbm{1}_{\{|\Delta \widecap{M}_{n,k}| >\epsilon \}} | \mathcal{F}_{k-1} \Big) \to_{{\rm L_1}} 0.$$
\end{lemma}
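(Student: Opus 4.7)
The plan is to reduce the conditional Lindeberg condition to the uniform tail estimate of Lemma~\ref{lem: key Lind} via a weighted Ces\`aro argument. From~\eqref{eq: M mg transf} and~\eqref{eq: mg array in critical}, we have $\Delta \widecap{M}_{n,k} = \frac{a_k \mu_k}{v_n}\, \Delta L_k$. Since $p = p_c$, Lemma~\ref{lem: index} gives $\{a_n \mu_n\} \in RV_{-1/2}$, so $a_n \mu_n \to 0$, and the constant $C := \sup_{n \ge 1} a_n \mu_n$ is finite. Consequently, for all $n \ge 1$ and $1 \le k \le \ntfloor$,
\[
\{|\Delta \widecap{M}_{n,k}| > \epsilon\} \;\subseteq\; \bigl\{|\Delta L_k| > \epsilon v_n / C\bigr\}.
\]

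With $\lambda_n := v_n/C$, which is nondecreasing and tends to $\infty$ by~\eqref{eq: v} and the unboundedness of $\{v_n\}$, the Lindeberg sum is bounded above by $\sum_{k=1}^{\ntfloor} w_{n,k}\, \phi_{n,k}$, where
\[
w_{n,k} := \frac{a_k^2 \mu_k^2}{v_n^2}, \qquad \phi_{n,k} := \mathbb{E}\bigl((\Delta L_k)^2 \mathbbm{1}_{\{|\Delta L_k| > \epsilon \lambda_n\}} \mid \mathcal{F}_{k-1}\bigr).
\]
Because $\lambda_n \ge \lambda_k$ for $k \le n$, I would dominate $\phi_{n,k} \le \tilde\phi_k := \mathbb{E}\bigl((\Delta L_k)^2 \mathbbm{1}_{\{|\Delta L_k| > \epsilon \lambda_k\}} \mid \mathcal{F}_{k-1}\bigr)$, and Lemma~\ref{lem: key Lind}, applied with the sequence $\{\lambda_k\}$, yields $\tilde\phi_k \to 0$ in $L^1$.

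It remains to push this convergence through the weighted sum. By~\eqref{eq: v}, $\sum_{k=1}^n w_{n,k} = 1$; for each fixed $k$, $w_{n,k} \to 0$ as $n \to \infty$ since $v_n \to \infty$ while $a_k \mu_k$ is fixed. A standard Toeplitz truncation then closes the argument: given $\eta > 0$, choose $K$ such that $\mathbb{E} \tilde\phi_k < \eta$ for all $k > K$; using $\mathbb{E} \tilde\phi_k \le \mathbb{E}(\Delta L_k)^2 \le \mathbb{E} X_k^2 = 1$ (by Lemma~\ref{Variance of the increments}), we obtain
\[
\mathbb{E}\sum_{k=1}^{\ntfloor} w_{n,k}\, \tilde\phi_k \;\le\; \sum_{k=1}^{K} w_{n,k} \;+\; \eta \sum_{k=K+1}^{\ntfloor} w_{n,k} \;\le\; \sum_{k=1}^{K} w_{n,k} + \eta.
\]
Letting $n \to \infty$ and then $\eta \downarrow 0$ gives convergence of the Lindeberg sum to $0$ in $L^1$, and hence in probability.

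The only subtle point, rather than a real obstacle, is the observation that negative regular variation of $\{a_n \mu_n\}$ at criticality forces the sequence itself (not merely its partial sums) to be uniformly bounded; this bound is precisely what converts the $k$-dependent threshold $\epsilon v_n /(a_k \mu_k)$ into a single sequence $\epsilon \lambda_n$ to which Lemma~\ref{lem: key Lind} can be applied uniformly in $k$.
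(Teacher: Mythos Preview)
Your approach is essentially the paper's: bound $a_k\mu_k$ uniformly, reduce the indicator event to one involving $|\Delta L_k|$ and an increasing threshold, invoke Lemma~\ref{lem: key Lind}, and finish with a Toeplitz/Ces\`aro argument (the paper packages this last step as Lemma~\ref{lem: cesaro}).

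There is one gap. Your domination $\phi_{n,k} \le \tilde\phi_k$ rests on $\lambda_n \ge \lambda_k$, which you state only for $k \le n$. But the Lindeberg sum runs to $\ntfloor$, and for $t>1$ you have $n < k \le \ntfloor$, where $v_k > v_n$ and the inclusion $\{|\Delta L_k| > \epsilon\lambda_n\} \subseteq \{|\Delta L_k| > \epsilon\lambda_k\}$ fails. The paper closes this by using that $\{v_n\}$ is slowly varying at $p=p_c$ (cf.\ Lemma~\ref{lem: ell} and Lemma~\ref{lem: rate sum}, which give $v_{\ntfloor}/v_n$ bounded for each fixed $t$): then $\frac{a_k\mu_k}{v_n} \le \frac{C}{v_n} \le \frac{\widecap{C}_t}{v_{\ntfloor}} \le \frac{\widecap{C}_t}{v_k}$ for all $k\le\ntfloor$, so the threshold $\epsilon v_k/\widecap{C}_t$ works uniformly. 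Equivalently, within your framework you could split off $\sum_{k=n+1}^{\ntfloor} w_{n,k}\,\phi_{n,k}$, bound $\mathbb{E}\phi_{n,k}\le 1$, and use $\sum_{k=n+1}^{\ntfloor} w_{n,k} = (v_{\ntfloor}^2 - v_n^2)/v_n^2 \to 0$; either way the missing ingredient is the slow variation of $\{v_n\}$. A related slip: you write $\sum_{k=1}^n w_{n,k}=1$, but the relevant sum is $\sum_{k=1}^{\ntfloor} w_{n,k} = v_{\ntfloor}^2/v_n^2$, which is bounded (and $\to 1$) by the same slow variation.
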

 \begin{proof}[Proof for SRRW-R-RVM]
From~\eqref{eq: mg array in critical}, we have
\begin{equation} \label{eq: Lind cap M}
\sum_{k=1}^{\ntfloor} \bb{E} \left( (\Delta\widecap{M}_{n,k})^2\mathbbm{1}_{\{|\Delta \widecap{M}_{n,k}| >\epsilon \}} | \mathcal{F}_{k-1} \right) = \sum_{k=1}^{\ntfloor} \frac{a_k^2 \mu_k^2}{\nu_n^2} \bb{E} \big( (\Delta L_{k})^2\mathbbm{1}_{\left\{a_k \mu_k|\Delta L_{k}| >\epsilon \nu_n \right\}} | \mathcal{F}_{k-1} \big).
\end{equation}
For Rademacher innovation, we have $|\Delta L_{k}| \le 2$ and $\{a_k \mu_k\}$ is bounded, while $\nu_n\to \infty$, which proves the result.
\end{proof}

The limiting quadratic variation process from Lemma~\ref{prop: QV proc in the critical regime} and the Lindeberg condition from Lemma~\ref{lemma: cond lind cond in critical case} give the invariance principle on $D([1/T, T])$, for any $T>0$ from Theorem~2.5 of~\cite{Durrett1978}. The invariance principle is extended to $D((0, \infty))$ by Proposition~2.3 of~\cite{Janson1994}. Since the proofs of the above Lemmas are restricted to SRRW-R-RVM, so is that of the following Proposition. See Section~\ref{sec: SRRW} for a complete proof.
 \begin{proposition}
    {\label{lemma: time deformed M_n FCLT}}
    Let $\{S_n\}_{n\geq 0}$ be the SRRW-RVM with zero mean, unit variance innovation sequence $\{\xi_n\}$. Assume $p=p_c$ and $\{v_n\}$ is unbounded. Then, $( M_{\ntfloor}/v_n: t> 0)$ converges weakly in $D((0,\infty))$ with Skorohod topology to the process which always takes the value $Z$.
\end{proposition}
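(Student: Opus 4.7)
The plan is to apply the functional martingale central limit theorem of Durrett and Resnick (Theorem~2.5 of~\cite{Durrett1978}) to the triangular array of martingale differences $\{\Delta\widecap{M}_{n,k}\}$ defined in~\eqref{eq: mg array in critical}. The two principal hypotheses---convergence of the quadratic variation process and the conditional Lindeberg condition---have already been verified in Lemmas~\ref{prop: QV proc in the critical regime} and~\ref{lemma: cond lind cond in critical case} respectively. A minor preliminary point is that Lemma~\ref{prop: QV proc in the critical regime} yields only pointwise convergence $\langle\widecap{M}\rangle_{n,\ntfloor}\xrightarrow{{\rm P}} 1$ at each $t>0$. However, the quadratic variation is monotone nondecreasing in $t$ and its limit is the continuous (constant) function $1$, so the convergence automatically strengthens to uniform convergence in probability on every compact interval $[1/T, T]\subset(0,\infty)$ by a standard Polya-type argument for monotone processes converging to a continuous deterministic limit.

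Having checked the hypotheses on every such $[1/T, T]$, Theorem~2.5 of~\cite{Durrett1978} yields the weak convergence of $(\widecap{M}_{n,\ntfloor}: t\in[1/T, T])$ in the Skorohod topology on $D([1/T, T])$ to the process $B(\eta^2(\cdot))$, where $B$ is a standard Brownian motion and $\eta^2$ is the limiting quadratic variation function. Here $\eta^2\equiv 1$ on $(0,\infty)$, so the limit is the constant-in-$t$ process $B(1)\equiv Z$ with $Z\sim N(0,1)$. Since this limit has continuous (indeed constant) paths on $[1/T, T]$, Skorohod convergence coincides with uniform convergence, and no continuity-set issues arise. Taking $T\uparrow\infty$ and invoking Proposition~2.3 of~\cite{Janson1994} then promotes the convergence to weak convergence on $D((0,\infty))$, as required.

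The underlying reason the limit is constant in $t$ is the slow variation of $\{v_n^2\}$ in the critical regime with unbounded $\{v_n\}$: since $\{a_n\mu_n\}\in RV_{-1/2}$ when $p=p_c$ (Lemma~\ref{lem: index}), we have $\{a_n^2\mu_n^2\}\in RV_{-1}$, and hence by Karamata's theorem $v_n^2=\sum_{k=1}^n a_k^2\mu_k^2$ is slowly varying. Consequently $v_{\ntfloor}^2/v_n^2\to 1$ for every $t>0$, which is precisely what forces the conditional variance of $\widecap{M}_{n,\ntfloor}$ to equal $1$ in the limit for every $t>0$. Because a continuous extension to $t=0$ would have to equal $B(0)=0$, producing an a.s.\ jump discontinuity at the origin, the natural state space is $D((0,\infty))$ rather than $D([0,\infty))$. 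Beyond this observation, no substantial obstacle is expected; the main conceptual point---and the only real subtlety---is recognising that a constant-in-$t$ time-change function produces a degenerate, constant-in-$t$ Gaussian limit rather than a nontrivial Brownian motion, all the analytic work having been absorbed into the two preceding lemmas.
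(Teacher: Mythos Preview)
Your proposal is correct and follows essentially the same approach as the paper: apply Theorem~2.5 of~\cite{Durrett1978} using Lemmas~\ref{prop: QV proc in the critical regime} and~\ref{lemma: cond lind cond in critical case} to obtain convergence on $D([1/T,T])$ for each $T>0$, then extend to $D((0,\infty))$ via Proposition~2.3 of~\cite{Janson1994}. Your additional remarks about the Polya-type uniformisation of the quadratic variation convergence and the slow variation of $\{v_n^2\}$ forcing a constant limit are correct elaborations, though the former is not strictly needed since Durrett--Resnick requires only pointwise convergence of the quadratic variation.
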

Note that the process at $t=0$ has limit $0$. Hence the limiting process of $(M_{\floor{nt}}/v_n:t>0)$ cannot be extended on $[0, \infty)$ as an r.c.l.l.\ process. So, we prove Theorem~\ref{Supercritical weak convergence} in $D((0,\infty))$ and use the following analog of Lemma~\ref{lem: equi unif cont} to extend the convergence of $(S_{\floor{nt}}/\sigma_n:t>0)$ to $D([0,\infty))$.
 \begin{lemma}
    \label{lem: equi unif cont crit}
    Let $\{S_n\}_{n\geq 0}$ be the SRRW-RVM with zero mean, unit variance innovation sequence $\{\xi_n\}_{n\ge 1}$. 
at $p=p_c$ with unbounded $\{v_n\}$. Then, for all $\epsilon>0$,
$\lim_{t\rightarrow 0} \limsup_{n\rightarrow\infty} \mathbb{P} \left( S^*_{\lfloor nt\rfloor} > \epsilon \sigma_n \right) = 0$.
\end{lemma}
\begin{proof}
Using~\eqref{eq: N}, Doob's $L^2$ inequality and the fact that $\{\eta_n\}$ is a nondecreasing sequence, we have
$\mathbb{E} \Big( S_\ntfloor^* \Big)^2 / {\sigma_n^2} \le 2 \mathbb{E} \left( \max_{1 \le k\le \ntfloor} N_k^2 \right) / {\sigma_n^2} + 2 \mathbb{E} \left( \max_{1 \le k \le \floor{nt}} \eta_{k}^2 M_{k}^2 \right) / {\sigma_n^2}
\le 8 \mathbb{E} N_\ntfloor^2 / {\sigma_n^2} + {8 \eta_\ntfloor^2}\mathbb{E} M_\ntfloor^2 /{\sigma_n^2}$.
Lemma~\ref{prop: L^2 in prob lim of mg M} gives negligibility of the first term. The second term becomes negligible by plugging in the asymptotics of $\mathbb{E} M_k^2$ for $p= p_c$ from Proposition~\ref{Asymptotics of martingale M_n} 
and letting $t\to0$. Then, the result follows using Chebyshev inequality.
\end{proof}
The above proof works for any zero mean, unit variance innovation sequence.
Now we prove the process weak limit for $p=p_c$ with unbounded $\{v_n\}$. The proof is restricted to SRRW-R-RVM. See Section~\ref{sec: SRRW} for the general case.
 \begin{proof}[Proof of Theorem \ref{Supercritical weak convergence} for SRRW-R-RVM]
We consider a decomposition of $ S_\ntfloor/{\sigma_n}$ using~\eqref{eq: N}:
    \begin{equation}\label{eq: critical split}
        \frac{S_{\lfloor nt\rfloor}}{\sigma_n}=\frac{N_{\lfloor nt\rfloor}}{\sigma_n} +  \left(\frac{\eta_{\lfloor nt\rfloor}}{\eta_n}-\sqrt{t} \right) p a_n \mu_n \eta_n \frac{M_{\lfloor nt\rfloor}}{v_n} +\sqrt{t} pa_n \mu_n \eta_n \frac{M_{\lfloor nt\rfloor}}{v_n}.
    \end{equation}
    Fix $T>0$. Doob's $L^2$ inequality and Lemma~\ref{prop: L^2 in prob lim of mg M} make the first term of the right side of~\eqref{eq: critical split} negligible in probability uniformly on $[0,T]$ and hence in $D([0,T])$ under the Skorohod topology. For $p=p_c$, Karamata's theorem gives $p a_n \mu_n \eta_n \to 2\gamma+1$. By Proposition~0.5 of~\cite{Resnick1987}, $(\eta_\ntfloor/\eta_n - \sqrt{t})$ converges uniformly to $0$ on $[0,T]$. The function $t\mapsto\sqrt{t}$ is bounded on $[0,T]$. Arguing as for Theorem~\ref{Invariance principle}, using an application of the continuity theorem to Proposition~\ref{lemma: time deformed M_n FCLT} and Slutsky's theorem, the process given by the second term is negligible in probability and that of the third term of~\eqref{eq: critical split} converges weakly to the required limiting process in $D((0,T])$. The process convergence is extended to $D((0,\infty))$ using Proposition~2.3 of~\cite{Janson1994} and then to $D([0,\infty))$ using Lemma~\ref{lem: equi unif cont crit} and Proposition~2.4 of~\cite{Janson1994}.
\end{proof}

 We end this subsection with a proof of the weak continuity of the limiting process, obtained under $\sigma_n$ scaling, at $p=p_c$. The proof does not use the Rademacher structure of the innovation sequence.
 \begin{proof}[Proof of Proposition~\ref{prop: tightness for G at crit}]
By Corollary~\ref{CLT in diffusive regime}, we have
    $\bb E (\widetilde{\mathcal G}^{(p)}(t))^2=t ({2\gamma+1-p})/({1-p})$, {for all} $p \le p_c$. 
The remainder of the proof is same as that of Proposition~\ref{prop: tightness in subcrit} and is skipped. 
\end{proof}

\section{Some Illustrative Examples for the Critical Regime}{\label{sec: examples}}
In this section, we study the scale $\sigma_n$ used in the critical regime $p=p_c$ more carefully. We provide explicit rates for $\sigma_n$ corresponding to two wide classes of the memory sequence $\{\mu_n\}$. All the rates, except $\sqrt{n \log n}$, are novel for SRRW-RVM. For the explicit rates of $\sigma_n$, we compute the rate of $\{a_n\}$ first, which is determined by $\sum_{k=1}^{n-1}{\mu_{k+1}}/{\nu_k}$.
\begin{lemma}{\label{log a_n}}
    For $p=p_c$, 
    $\left\{\log a_n + (\gamma+1/2)(\gamma+1)^{-1} \sum_{k=1}^{n-1} {\mu_{k+1}}/{\nu_k}\right\}$
    is convergent.
\end{lemma}
\begin{proof}
    From \eqref{contd prod}, we have for $p=p_c$,
    $$-\log a_n = \frac{\gamma +\frac{1}{2}}{\gamma+1} \sum_{k=1}^{n-1} \frac{\mu_{k+1}}{\nu_k} + \sum_{k=1}^{n-1} \Oh\l(\frac{\mu_{k+1}^2}{\nu_k^2}\r).$$
    The result then follows, as $\mu_{k+1}/\nu_k \sim (\gamma+1)/k$ from Karamata's theorem.
\end{proof}

We first assume the memory sequence to be embedded into a regularly varying function. 
\begin{assum}
    \label{assmp: gen}
    There exists a function $\mu:[1,\infty)\to(0,\infty)$, which is eventually monotone and regularly varying of index $\gamma$, such that $\mu_n = \mu(n)$.
\end{assum}
We also consider the integrated version of the function $\mu$, given by 
$\nu(x) = \int_1^x \mu(s) ds$,
to compare with the sequence $\{\nu_n\}$. By Karamata's theorem, $\nu$ is regularly varying of index $(\gamma+1)>0$ and thus diverges to $\infty$. The next lemma compares the sequences $\{\mu_{n+1}/\nu_n\}$, appearing in Lemma~\ref{log a_n}, and $\{\mu(n+1)/\nu(n+1)\}$.
\begin{lemma}{\label{err rate mu_n/nu_n}}
For $\mu$ in Assumption~\ref{assmp: gen} and its integrated version $\nu$, the sequence
$$\left\{\l|\frac{\mu_{n+1}}{\nu_n}- \frac{\mu(n+1)}{\nu(n+1)}\r|\right\}$$ is summable.
\end{lemma}
\begin{proof}
  Using $\mu_{n+1}= \mu(n+1)$ and Karamata's theorem, first note that 
  \begin{equation} \label{eq: nu bd}
    \l|\frac{\mu_{n+1}}{\nu_n}-\frac{\mu(n+1)}{\nu(n+1)}\r| = \frac{\mu(n+1)}{\nu_n\nu(n+1)}\l|\nu(n+1)-\nu_n\r| = \Oh \l( \frac{1}{n^2 \mu_n}\r) \l|\nu(n+1)-\nu_n\r|
  \end{equation}
  For eventually monotone increasing $\mu$, choose $N_0\in \bb{N}$ with $\mu$ is monotone increasing on $[N_0,\infty)$. Then for all $n\geq N_0$,
  \begin{align}
    \nu(n+1) - \nu_n &= \sum_{k=1}^n \int_k^{k+1} (\mu(x) - \mu_k) dx \ge \nu(N_0) - \nu_{N_0-1}, \label{eq: nu lower}
  \quad \text{and}\\
        \nu(n+1) - \nu_n &= \sum_{k=1}^n \int_k^{k+1} (\mu(x) - \mu_k) dx
        \leq \nu(N_0) - \nu_{N_0-1} + \sum_{k=N_0}^n(\mu_{k+1}-\mu_k) \le \mu_{n+1} + \nu(N_0). \label{eq: nu upper}
    \end{align}
    Using $\nu_n\to\infty$ and $\nu(n+1)\to\infty$, we have $|\nu(n+1)-\nu_n| = \Oh (\mu_n)$ and the result follows from~\eqref{eq: nu bd}. For eventually monotone decreasing $\mu$, choose $N_0\in \bb{N}$ with $\mu$ is monotone decreasing on $[N_0,\infty)$. Then for all $n\geq N_0$, the inequalities~\eqref{eq: nu lower} and~\eqref{eq: nu upper} are reversed and $|\nu(n+1)-\nu_n|$ is bounded. The result then follows from~\eqref{eq: nu bd} as $\gamma>-1$.
\end{proof}
Lemma~\ref{err rate mu_n/nu_n} allows us to rewrite Lemma~\ref{log a_n} as follows.
\begin{corollary}
    \label{cor: log a_n}
    For $p=p_c$, $\left\{\log a_n + (\gamma+\frac12)(\gamma+1)^{-1} \sum_{k=1}^{n-1} {\mu(k+1)}/{\nu(k+1)}\right\}$ is convergent.
\end{corollary}

The explicit rate of the sequence $\{a_n\}$ and the scale $\sigma_n$ depends on $\ell(x)$, the slowly varying part of $\mu$, where we write $\mu(x) = x^\gamma \ell(x)$. We consider two broad subclasses of the function $\ell$ and analyze $\sigma_n$ in the next two subsections.

\subsection{Slowly varying function of \texorpdfstring{$\log n$}{log n}}{\label{logn RV examples}} 
Some common choices of slowly varying functions are $\log x$, $(\log x)^{-1}$, $\log \log x$, $\exp((\log \log x)^\alpha)$ with $0<\alpha<1$ etc., all of which are monotone, differentiable and regularly varying functions of $\log x$ with the derivative being regularly varying again. We consider such functions as the slowly varying factor of $\mu$.

\begin{assum}{\label{eg 1}}
Suppose $\mu(x)=x^{\gamma}\ell(x)$ satisfies:
\begin{enumerate}
 \item  For some $B>1$, the slowly varying function $\ell(x)=f(\log x),  x>e^B$, where, for some $\alpha \in \mathbb R$, the function $f:  (B,\infty) \rightarrow (0,\infty)$ is regularly varying of index $\alpha$ having the form
     \begin{equation}{\label{eq: RV f}}
    f(x) = x^{\alpha} \exp\l(\int_{ \log B}^{\log x} \zeta(s) ds\r),
    \end{equation} 
    where the function $\zeta$ is integrable on $( \log B,x)$ for all $x>0$ and $\zeta(x)\to0$ as $x\to\infty$. \label{assmp: zeta}
    \item We further assume the function $\zeta$ to be one of the following three types:
    \begin{enumerate}
        \item The function $\zeta$ is the identically zero function. \label{zeta zero}
        \item The function $\zeta$ is nonincreasing  (and hence positive), regularly varying of index $-\rho$ with $\rho \in (0,1]$ with $\int_{\log B}^\infty \zeta(x) dx = \infty$, is eventually differentiable with monotone derivative. \label{zeta pos}
        \item The function $\zeta = - \widetilde \zeta$, where $\widetilde \zeta$ satisfies the conditions in~(b) above. \label{zeta neg}
    \end{enumerate} \label{assmp: zeta ii}
\end{enumerate}
\end{assum}
\begin{remark} \label{rem: assmp}
The form~\eqref{eq: RV f} of $f$ is motivated by the Karamata's representation of a regularly varying function. We ignore possible oscillations of the slowly varying factor. Three choices of $\zeta$ in Assumption~\ref{eg 1}~\eqref{assmp: zeta ii} correspond to the cases where the slowly varying factor of $f$ is constant, diverges to $\infty$ or converges to $0$. 
Further, observe that for $\rho>1$, the integral in~\eqref{eq: RV f} converges, and can be absorbed in the constant. For simplicity, we assume the convergent factor in Karamata representation of $f$ to be constant. Further, since we consider ratios of the form $\mu(n)/\nu(n)$ in Corollary~\ref{cor: log a_n}, we take the constant to be $1$. We also rule out $\rho<0$ to ensure $\zeta(x)\to 0$, as required in Karamata's representation. 

Since $\zeta'$ is assumed to be eventually monotone in Assumptions~\ref{eg 1}~\eqref{zeta pos} and~\ref{eg 1}~\eqref{zeta neg}, by monotone density theorem (see Theorem~1.7.2 of~\cite{Bingham1987}), we get $x  |\zeta'(x)| \sim \rho \zeta(x)$, as $x\to\infty$. Furthermore, we disallow $\rho=0$ to guarantee $|\zeta'|$ to be regularly varying of index $-(\rho+1)$.
\end{remark}

As $\zeta$ is differentiable, $f$ is also twice differentiable. We gather the formulas for $f'$ and $f''$.
\begin{lemma}{\label{prop: f' and f''}}
    Let $f$ be the regularly varying function defined in \eqref{eq: RV f}. Then we have, for all large enough $x$,
     $$f'(x)={(\alpha+ \zeta(\log x))} \frac{f(x)}{x} \quad \text{and} \quad f''(x)=\l\{(\alpha+ \zeta(\log x))(\alpha-1+ \zeta(\log x))+ \zeta'(\log x) \r\} \frac{f(x)}{x^2}.$$
    Further, the functions $|f'|$ and $|f''|$ are regularly varying of indices $\alpha-1$ and $\alpha-2$ respectively, except when $\alpha\in\{0,1\}$ and $\zeta$ is identically zero. If $\alpha=0$ and $\zeta$ is the zero function, then $f'$ and $f''$ both are identically zero functions. If $\alpha=1$ and $\zeta$ is the zero function, then $f''$ is the identically zero function.
\end{lemma}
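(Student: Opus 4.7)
The plan is to first establish the two explicit formulas for $f'$ and $f''$ by straightforward differentiation of~\eqref{eq: RV f}, and then to analyze the regular variation of $|f'|$ and $|f''|$ by studying the asymptotic behavior of the factors $\zeta(\log x)$ and $\zeta'(\log x)$. For the formulas, I would apply logarithmic differentiation: from $\log f(x) = \alpha \log x + \int_0^{\log x} \zeta(s)\,ds$, the chain rule together with the fundamental theorem of calculus (valid since $\zeta$ is integrable by Assumption~\ref{eg 1}~\eqref{assmp: zeta}) yields $f'(x)/f(x) = (\alpha + \zeta(\log x))/x$. Writing $f' = g f$ with $g(x) = (\alpha + \zeta(\log x))/x$ and using the differentiability of $\zeta$ guaranteed by Assumption~\ref{eg 1}~\eqref{assmp: zeta ii}, the quotient rule gives $g'(x) = [\zeta'(\log x) - \alpha - \zeta(\log x)]/x^2$, and then $f'' = (g^2 + g') f$ simplifies algebraically via $(\alpha + \zeta)^2 - (\alpha + \zeta) = (\alpha + \zeta)(\alpha - 1 + \zeta)$ to the stated expression.

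For the regular variation assertions, the key preliminary observation is that, under each subcase of Assumption~\ref{eg 1}~\eqref{assmp: zeta ii}, both $\zeta(\log x)$ and $\zeta'(\log x)$ tend to $0$ as $x \to \infty$, and moreover each is slowly varying as a function of $x$: indeed, $\log(\lambda x)/\log x \to 1$ and the regular variation of $\zeta$ (respectively $\zeta'$, whose index is $-\rho - 1$ by the monotone density theorem applied to monotone $\zeta$, Theorem~1.7.2 of~\cite{Bingham1987}) transfers to slow variation of the composition with $\log$. Thus, when $\alpha \ne 0$, the factor $\alpha + \zeta(\log x)$ is eventually bounded away from zero, so $|f'(x)| \sim |\alpha| f(x)/x$ is regularly varying of index $\alpha - 1$. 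Likewise, when $\alpha \notin \{0,1\}$, the bracketed coefficient in the $f''$ formula converges to $\alpha(\alpha - 1) \ne 0$, yielding $|f''(x)| \sim |\alpha(\alpha-1)| f(x)/x^2$, regularly varying of index $\alpha - 2$.

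The subtle point, which I expect to be the main obstacle, is the borderline $\alpha \in \{0, 1\}$ where the dominant constants vanish and one must track the next-order contribution. For $\alpha = 0$ with $\zeta \not\equiv 0$, since $\zeta$ is eventually monotone $\zeta(\log x)$ has constant sign, and $|f'(x)| = |\zeta(\log x)| f(x)/x$ is regularly varying of index $-1$. For $f''$ in this case, the coefficient becomes $\zeta(\log x)(\zeta(\log x) - 1) + \zeta'(\log x)$, and I would invoke the monotone density theorem again to conclude $y\zeta'(y) \sim -\rho\,\zeta(y)$, so that $\zeta'(\log x) \sim -\rho\, \zeta(\log x)/\log x = o(\zeta(\log x))$, whence the coefficient is asymptotic to $-\zeta(\log x)$ and $|f''|$ is regularly varying of index $-2$. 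A symmetric argument with $1 + \zeta$ replacing $\zeta$ handles $\alpha = 1$, giving index $-1$. The remaining degenerate cases $\alpha \in \{0,1\}$ with $\zeta \equiv 0$ reduce to $f(x) \equiv 1$ or $f(x) = x$, for which the vanishing of $f'$ or $f''$ is immediate from the formulas already established.
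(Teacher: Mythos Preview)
Your proof is correct and complete. The paper does not supply a proof of this lemma, treating the formulas and the regular variation claims as direct consequences of the definition in~\eqref{eq: RV f} together with Assumption~\ref{eg 1} and Remark~\ref{rem: assmp}; your argument via logarithmic differentiation for the formulas and the case analysis on $\alpha$ (with the monotone density theorem controlling the $\zeta'(\log x)$ contribution in the borderline cases) is precisely the computation the paper leaves implicit.
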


We are now ready to obtain explicit rates for the scale $\sigma_n$ under Assumption~\ref{eg 1}.
\begin{theorem}{\label{rates l_n eg 1}}
    For $\mu(x)=x^\gamma \ell(x)$ satisfying Assumption~\ref{eg 1} and for $p=p_c$, we have
    $a_n^2\mu_n^2\sim {C_\mu} {\ell_n^{{1}/{(\gamma+1)}}}/n$,
    where $C_\mu$ is a positive constant, possibly depending on the memory sequence $\{\mu_n\}$. The sequence $\{v_n\}$ is unbounded {\rm (}respectively, bounded{\rm )} when $\alpha+\gamma+1$ is positive {\rm (}respectively, negative{\rm )}. Further, when $\alpha+\gamma+1>0$, we have
    \[
    v_n^2 \sim \frac{\gamma+1}{\alpha+\gamma+1} a_n^2 \mu_n^2 n \log n \quad \text{and} \quad \sigma_n^2 \sim 
        \frac{\gamma+1}{\alpha+\gamma+1}n\log n. 
    \]
\end{theorem}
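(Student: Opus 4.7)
The plan is to apply Corollary~\ref{cor: log a_n} to obtain a sharp asymptotic for $\log a_n$, then combine with Karamata's theorem to read off the rate of $a_n^2\mu_n^2$, and finally apply Karamata once more after the logarithmic change of variable $u=\log x$ to handle $v_n^2$ and $\sigma_n^2$.

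First I would observe that, under Assumption~\ref{assmp: gen}, $\mu/\nu=(\log\nu)'$, so $\int_1^n \mu(x)/\nu(x)\,dx = \log\nu(n) - \log\nu(1)$. Because $g(x):=\mu(x)/\nu(x)$ is eventually monotone and in $RV_{-1}$, the term-by-term errors $g(k)-\int_k^{k+1} g$ are of order $k^{-2}$ and thus summable, yielding $\sum_{k=1}^{n-1} g(k+1) = \log\nu(n) + c_1 + o(1)$ for some finite $c_1$. Feeding this into Corollary~\ref{cor: log a_n}, whose $\sum\Oh(k^{-2})$ remainder converges as well, one obtains
\[
-\log a_n = \tfrac{\gamma+1/2}{\gamma+1}\log\nu(n) + c_2 + o(1)
\]
for a finite $c_2$, so that $a_n \sim C\,\nu(n)^{-(\gamma+1/2)/(\gamma+1)}$ with $C>0$. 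Since $\nu(n)\sim n\mu(n)/(\gamma+1)$ by Karamata, a short exponent calculation (note $2-(2\gamma+1)/(\gamma+1) = 1/(\gamma+1)$) produces
\[
a_n^2\mu_n^2 \sim C_\mu\,\mu(n)^{1/(\gamma+1)}/n = C_\mu\,\ell_n^{1/(\gamma+1)}/n,
\]
which establishes the first claim.

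For the remaining claims, I would use the fact that the $RV_{-1}$ sequence $\{a_n^2\mu_n^2\}$ satisfies $\sum_{k=1}^n a_k^2\mu_k^2 \sim C_\mu\int_1^n \ell(x)^{1/(\gamma+1)}/x\,dx$ whenever the right-hand side diverges (standard Karamata comparison of sum with integral). The substitution $u=\log x$ converts this into $C_\mu\int_0^{\log n} f(u)^{1/(\gamma+1)}\,du$. Since $f^{1/(\gamma+1)}$ is regularly varying of index $\alpha/(\gamma+1)$ in $u$, Karamata's theorem shows the integral diverges exactly when $\alpha+\gamma+1>0$ and is finite when $\alpha+\gamma+1<0$; in the divergent case
\[
v_n^2 \sim \frac{(\gamma+1)C_\mu}{\alpha+\gamma+1}\,\ell_n^{1/(\gamma+1)}\log n.
\]
Dividing by the asymptotic for $a_n^2\mu_n^2$ then delivers $\sigma_n^2 \sim \frac{\gamma+1}{\alpha+\gamma+1}\,n\log n$.

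The main obstacle I foresee is upgrading Corollary~\ref{cor: log a_n} from an order-of-magnitude bound to an honest asymptotic equivalence: a well-defined multiplicative constant $C_\mu$ requires the sum-to-integral error for $\mu(x)/\nu(x)$ to converge to a finite limit rather than merely stay bounded. This rests on the eventual monotonicity of $g=\mu/\nu$ (guaranteed under Assumption~\ref{eg 1} by the smoothness of $f$ and $\zeta$ recorded in Lemma~\ref{prop: f' and f''}) together with the $\Oh(k^{-2})$ pointwise rate coming from $g\in RV_{-1}$, which permits telescoping the errors into a convergent series. A secondary subtlety is checking that the three sub-types of $\zeta$ in Assumption~\ref{eg 1} all fall under the same Karamata computation; fortunately only the regular-variation index $\alpha$ of $f$ enters the threshold $\alpha+\gamma+1 \lessgtr 0$, so the trichotomy collapses into a single dichotomy.
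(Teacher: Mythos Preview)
Your proposal is correct and, for the computation of $a_n$, takes a cleaner route than the paper. The paper expands $\nu(x)$ via two successive integrations by parts, invokes Lemma~\ref{prop: f' and f''} to control each term, extracts a three-term asymptotic $\mu(n)/\nu(n)=(\gamma+1)/n+(\alpha+\zeta(\log\log n))/(n\log n)+\Oh((n\log^2 n)^{-1})$, and then sums each piece separately. Your observation that $\mu/\nu=(\log\nu)'$ bypasses all of that: after the sum-to-integral comparison you obtain $-\log a_n=\tfrac{\gamma+1/2}{\gamma+1}\log\nu(n)+c+o(1)$ in one stroke, and Karamata applied to $\nu$ finishes. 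This is shorter and uses none of the fine structure of $\zeta$. The paper's expansion does pay off later, in the boundary examples with $\alpha+\gamma+1=0$ where the second-order term $\zeta(\log\log n)/(n\log n)$ becomes decisive; for the theorem as stated, your argument is enough.

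Two small points. First, the displayed equality $C_\mu\,\mu(n)^{1/(\gamma+1)}/n=C_\mu\,\ell_n^{1/(\gamma+1)}/n$ is a slip: substituting $\nu(n)\sim n\mu(n)/(\gamma+1)$ into $a_n^2\sim C^2\nu(n)^{-(2\gamma+1)/(\gamma+1)}$ gives $a_n^2\mu_n^2\sim C''\,n^{-(2\gamma+1)/(\gamma+1)}\mu(n)^{1/(\gamma+1)}$, and only after writing $\mu(n)=n^\gamma\ell_n$ does the $n$-exponent collapse to $-1$. The final answer is right. Second, the eventual monotonicity of $g=\mu/\nu$ is not automatic from Assumption~\ref{assmp: gen} alone; under Assumption~\ref{eg 1} it does hold, since Lemma~\ref{prop: f' and f''} and Karamata give $\mu'/\mu-\mu/\nu=-1/x+o(1/x)$, so $(\mu/\nu)'<0$ eventually. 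Your treatment of $v_n^2$ and $\sigma_n^2$ via the substitution $u=\log x$ and Karamata on $f^{1/(\gamma+1)}$ matches the paper's; the paper also pauses to verify the eventual monotonicity of $x\mapsto x^{-1}f(\log x)^{1/(\gamma+1)}$ needed for the integral test, which you gloss over but which follows by the same derivative check.
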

\begin{proof}
Applying integration by parts twice in succession, the expression
    \[
    \nu(x) - \frac{x^{\gamma+1}f(\log x)}{\gamma+1} + \frac{x^{\gamma+1}f'(\log x)}{(\gamma+1)^2} - \frac{1}{(\gamma+1)^2} \int_1^x y^{\gamma}f''(\log y)dy\] is a constant.
    If $\alpha\in\{0,1\}$ and $\zeta\equiv 0$, then $f''\equiv 0$ and the integral above disappears; else, from Lemma~\ref{prop: f' and f''}, $f''$ is regularly varying and hence $y\mapsto f''(\log y)$ is slowly varying. Karamata's theorem gives
    $\int_1^x y^{\gamma}f''(\log y)dy \sim x^{\gamma+1} f''(\log x) / (\gamma+1) = \Oh \l( x^{\gamma+1} f(\log x) (\log x)^{-2} \r)$.
    Combining and using Lemma~\ref{prop: f' and f''}, we get
    \begin{multline*}
        \frac{\mu(n)}{\nu(n)} = \frac{\gamma+1}{n} \l[1-\frac{\alpha+ \zeta(\log\log n)}{(\gamma+1)\log n}+ \Oh \l( (\log n)^{-2} \r)\r]^{-1}\\
        =\frac{\gamma+1}{n} +\frac{\alpha+ \zeta(\log\log n)}{n\log n}+\Oh\l(\frac{1}{n(\log n)^2}\r).
    \end{multline*}
    Thus, from Corollary~\ref{cor: log a_n}, we obtain
    $$\log a_n +\l(\gamma+ \frac{1}{2}\r)\sum_{k=1}^{n-1}{k}^{-1} +\l(\gamma+ \frac{1}{2}\r)\sum_{k=2}^{n-1}\frac{\alpha+ \zeta(\log\log k)}{(\gamma+1)k\log k}$$
    is convergent. The remaining sums can be replaced by the corresponding integral and a convergent error sequence using the integral test -- see Theorem~8.23 of ~\cite{Apostol1974}. Combining,
    \[
    -\log a_n=\l(\gamma+\frac{1}{2}\r)\log n +\frac{\gamma+\frac12}{\gamma+1}\l(\alpha\log\log n+\int_{ \log B}^{\log\log n}\zeta(x)dx\r) - \frac12 \log C_\mu,
    \]
    which then gives
    $a_n \sim \sqrt{C_\mu} n^{-(\gamma+1/2)} \ell_n^{-(\gamma+1/2)/(\gamma+1)}$,
    where $C_\mu$ is a positive constant depending on the function $\mu$. Plugging in $\{\mu_n\}$, we get the rate for $a_n^2 \mu_n^2$.

    To study boundedness of $\{v_n\}$, we check the summability of $\{a_n^2 \mu_n^2\}$. We again use the integral test, for which we need to check the function $x\mapsto x^{-1} f(\log x)^{1/(\gamma+1)}$, or equivalently $x\mapsto {x^{-(\gamma+1)}} f(\log x)$ to be eventually decreasing (since $\gamma+1$ is positive). The derivative of the latter function is eventually negative iff (using Lemma~\ref{prop: f' and f''})
    ${f'(x)}/{f(x)} = (\alpha + \zeta(\log x))/{x} < \gamma+1$
    eventually. This is true as the left side goes to $0$ as $x\to\infty$, by Assumption~\ref{eg 1}.

    By the integral test, we have $\{a_n^2 \mu_n^2\}$ to be summable iff $\int_1^x s^{-1} f(\log s)^{1/(\gamma+1)} ds$ converges iff $\int_0^{\log x} f(s)^{1/(\gamma+1)} ds$ converges iff $\int_0^{x} f(s)^{1/(\gamma+1)} ds$ converges. The condition for the boundedness of $\{v_n\}$ then follows from the fact that the function $f$ is a regularly varying function of index $\alpha$. Remaining results follow from Karamata's theorem.
\end{proof}

We conclude this subsection with the convergence results for the scaled SRRW-RVM in the critical case corresponding to Theorem~\ref{rates l_n eg 1}, which depends on the sign of $\alpha + \gamma + 1$.
We begin with the following special case of Theorem~\ref{Supercritical weak convergence} for $p=p_c$ and $\{\mu_{n}\}$ satisfying Assumption \ref{eg 1} with $\alpha+\gamma+1>0$.
\begin{corollary}{\label{corr: nlogn scale}}
    Let $\{S_n\}_{n\geq 0}$ be the SRRW-RVM with zero mean, unit variance innovation sequence $\{\xi_n\}$. Then for $p=p_c$ and $\{\mu_n\}$ satisfying Assumption \ref{eg 1} with $\alpha+\gamma+1>0$,  
    \[
        \l(\frac{S_{\floor{nt}}}{\sqrt{n\log n}}:t\geq 0\r)\to_{\rm{w}}\l(\sqrt{t} N\l(0,\frac{(2\gamma+1)^2(\gamma+1)}{\alpha+\gamma+1}\r): t\geq 0\r) \quad \text{in $(D([0,\infty)), \mathcal D)$}.
        \]
\end{corollary}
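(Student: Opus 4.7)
The plan is to deduce Corollary~\ref{corr: nlogn scale} as a direct consequence of Theorem~\ref{Supercritical weak convergence} combined with the explicit asymptotics of $\sigma_n$ supplied by Theorem~\ref{rates l_n eg 1}. The only real work is converting the abstract scaling by $\sigma_n$ in Theorem~\ref{Supercritical weak convergence} to the concrete scale $\sqrt{n\log n}$, which under Assumption~\ref{eg 1} with $\alpha+\gamma+1>0$ differs from $\sigma_n$ only by a multiplicative constant.

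First I would verify that the hypotheses of Theorem~\ref{Supercritical weak convergence} hold. Since $p=p_c$ is given and Theorem~\ref{rates l_n eg 1} asserts that under Assumption~\ref{eg 1} with $\alpha+\gamma+1>0$ the sequence $\{v_n\}$ is unbounded, Theorem~\ref{Supercritical weak convergence} applies and yields
\[
\left(\frac{S_{\lfloor nt\rfloor}}{\sigma_n} : t\ge 0\right) \xrightarrow{\rm w} \left(\sqrt{t}\,(2\gamma+1)Z : t\ge 0\right)
\]
in $(D([0,\infty)),\mathcal D)$, where $Z$ is a standard Gaussian random variable.

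Next, Theorem~\ref{rates l_n eg 1} further furnishes the sharp asymptotic $\sigma_n^2 \sim \frac{\gamma+1}{\alpha+\gamma+1}\, n\log n$, so the deterministic sequence $c_n := \sigma_n/\sqrt{n\log n}$ converges to $c := \sqrt{(\gamma+1)/(\alpha+\gamma+1)}>0$. Because multiplication by a deterministic scalar is a continuous map on the Skorohod space $(D([0,\infty)),\mathcal D)$, Slutsky's theorem applied at the process level gives
\[
\left(\frac{S_{\lfloor nt\rfloor}}{\sqrt{n\log n}} : t\ge 0\right) = c_n \left(\frac{S_{\lfloor nt\rfloor}}{\sigma_n} : t\ge 0\right) \xrightarrow{\rm w} \left(c\sqrt{t}\,(2\gamma+1)Z : t\ge 0\right),
\]
and the limit process is identically distributed to $\bigl(\sqrt{t}\,N\bigl(0,(2\gamma+1)^2(\gamma+1)/(\alpha+\gamma+1)\bigr) : t\ge 0\bigr)$, which is the desired conclusion.

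There is no genuine obstacle to overcome: the corollary is a corollary in the technical sense, with all the analytic content already packaged inside Theorems~\ref{Supercritical weak convergence} and~\ref{rates l_n eg 1}. The only point requiring slight care is that the Slutsky step is invoked at the level of random elements of $D([0,\infty))$ rather than just for marginals; this is legitimate since the rescaling is by a (nonrandom) scalar sequence, so the map is globally continuous on the Skorohod space.
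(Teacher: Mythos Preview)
Your proposal is correct and matches the paper's approach exactly: the paper presents Corollary~\ref{corr: nlogn scale} explicitly as a special case of Theorem~\ref{Supercritical weak convergence}, with the concrete scaling and the unboundedness of $\{v_n\}$ supplied by Theorem~\ref{rates l_n eg 1}. The paper gives no separate proof, and your Slutsky argument for passing from $\sigma_n$ to $\sqrt{n\log n}$ is precisely the intended (and only) remaining step.
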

\begin{remark}
    Taking $\mu\equiv 1$, for $\alpha=\gamma=0$ and $ \zeta\equiv 0$ in Corollary~\ref{corr: nlogn scale}, we get the usual SRRW with $\sqrt{n \log n}$ scaling.
\end{remark}

The following examples give memory sequences $\{\mu_n\}$ such that Corollary~\ref{corr: nlogn scale} hold. We assume $B=1$ in~\eqref{eq: RV f}.
\begin{example}[$\zeta\equiv0$, $\alpha=0$, $\gamma+1/2>0$] \label{ex: zeta alpha zero pos}
Here we have $\mu(x)=x^{\gamma}$.
\end{example}
\begin{example}[$\zeta\equiv0$, $\alpha+\gamma+1>0$, $\gamma+1/2>0$] \label{ex: zeta pos}
Here we have $\mu(x)=x^{\gamma} (\log x)^\alpha$.
\end{example}
\begin{example}[$\zeta(x)={\kappa(1-\rho)}/{x^{\rho}}$ with $\kappa\ne0$ and $0<\rho<1$, $\alpha+\gamma+1>0$, $\gamma+1/2>0$] \label{ex: zeta power pos}
    Here we have $\mu(x)=x^\gamma (\log x)^\alpha \exp{\l( {\kappa} (\log\log x)^{1-\rho} \r)}$.
\end{example}
\begin{example}[$\zeta(x)=\kappa({x}^{-1} \wedge 1)$ with $\kappa\ne0$, $\alpha+\gamma+1>0$, $\gamma+1/2>0$] \label{ex: log log pos}
    In this case $\rho=1$ and we have $\mu(x)=x^\gamma (\log x)^\alpha ( e \log \log x)^\kappa$.
\end{example}
These examples can be easily modified so that the slowly varying factor of the memory sequence $\{\mu_n\}$ is some power of the iterates of logarithm or its exponential.

Next, we obtain a special case of Theorem~\ref{Superdiffusive process convergence} for $p=p_c$, when $\mu$ satisfies Assumption~\ref{eg 1} with $\alpha+\gamma+1<0$. Theorem~\ref{rates l_n eg 1} gives bounded $\{v_n\}$ and almost sure and in $L^2$ convergence. The following corollary considers a large class of such models. We follow up with specific examples to show that such class is not vacuous. These examples are completely novel in the literature.
\begin{corollary}{\label{corr: superdiffusive a.s. conv eg 1}}
    Let $\{S_n\}_{n\geq 0}$ be the SRRW-RVM with zero mean, unit variance innovation sequence $\{\xi_n\}$. Then for $p=p_c$ and $\{\mu_n\}$ satisfying Assumption~\ref{eg 1} with $\alpha+\gamma+1<0$,    
    \[\l(\sqrt{{\ell_n^{{1}/{(\gamma+1)}}}} \frac{S_{\floor{nt}}}{\sqrt{n}}: t\geq 0\r) \to \l(\sqrt{t}K_{\infty} : t\geq 0\r), \quad \text{almost surely and in $L^2$ in $D([0,\infty))$,}
    \]
     where $K_{\infty}$ is a nonrandom multiple (possibly depending on $\{\mu_n\}$) of $M_\infty$, the almost sure and $L^2$ limit of $M_n$.
\end{corollary}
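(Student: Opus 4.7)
The proof is essentially a direct translation of Theorem~\ref{Superdiffusive process convergence} under the explicit scaling furnished by Theorem~\ref{rates l_n eg 1}, so the plan is short.

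First, I would verify that the hypothesis places us in the setting of Theorem~\ref{Superdiffusive process convergence}. By Theorem~\ref{rates l_n eg 1}, Assumption~\ref{eg 1} with $\alpha+\gamma+1<0$ guarantees that $\{v_n\}$ is bounded (this is precisely the summability of $\{a_n^2\mu_n^2\}$ shown via the integral test in that proof). Since $p=p_c\in[p_c,1]$, Theorem~\ref{Superdiffusive process convergence} applies. I then specialize the constants: for $p=p_c=\frac{\gamma+1/2}{\gamma+1}$ one has $p(\gamma+1)-\gamma=\tfrac12$, and $\frac{p(\gamma+1)}{p(\gamma+1)-\gamma}=2\gamma+1$. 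Consequently,
\[
\left(a_n\mu_n S_{\lfloor nt\rfloor}:t\ge 0\right)\xrightarrow[L^2]{\mathrm{a.s.}}\left((2\gamma+1)\sqrt{t}\,M_\infty:t\ge 0\right)\qquad\text{in }(D([0,\infty)),\mathcal D).
\]

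Second, I convert the scaling. From Theorem~\ref{rates l_n eg 1},
\[
a_n^2\mu_n^2\sim \frac{C_\mu}{n}\,\ell_n^{1/(\gamma+1)},
\]
so the deterministic ratio
\[
\frac{\sqrt{\ell_n^{1/(\gamma+1)}/n}}{a_n\mu_n}\ \longrightarrow\ \frac{1}{\sqrt{C_\mu}}
\]
as $n\to\infty$. Multiplying the process convergence by this convergent deterministic sequence preserves almost sure and $L^2$ convergence in $(D([0,\infty)),\mathcal D)$; indeed, if $f_n\to f$ in the Skorohod topology and $c_n\to c$ in $\mathbb R$, then $c_n f_n\to c f$ in the Skorohod topology, and the same passes under almost sure and $L^2$ modes. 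This yields
\[
\left(\sqrt{\tfrac{\ell_n^{1/(\gamma+1)}}{n}}\,S_{\lfloor nt\rfloor}:t\ge 0\right)\xrightarrow[L^2]{\mathrm{a.s.}}\left(\sqrt{t}\,L_\infty:t\ge 0\right),
\]
where $L_\infty:=\frac{2\gamma+1}{\sqrt{C_\mu}}\,M_\infty$ is a nonrandom (depending only on $\{\mu_n\}$) multiple of $M_\infty$.

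There is no real obstacle beyond bookkeeping; the only point that deserves explicit mention is why Assumption~\ref{eg 1} with $\alpha+\gamma+1<0$ actually forces $\{v_n\}$ to be bounded (so that Theorem~\ref{Superdiffusive process convergence} is applicable). That verification is already contained in the proof of Theorem~\ref{rates l_n eg 1} via monotonicity of $x\mapsto x^{-(\gamma+1)}f(\log x)$ and Karamata's theorem applied to $\int_0^\infty f(s)^{1/(\gamma+1)}\,ds$, so the corollary follows without further work.
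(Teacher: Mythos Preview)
Your proposal is correct and matches the paper's approach exactly: the paper does not give an explicit proof of this corollary, merely noting in the preceding paragraph that Assumption~\ref{eg 1} with $\alpha+\gamma+1<0$ forces $\{v_n\}$ to be bounded (by Theorem~\ref{rates l_n eg 1}), so that Theorem~\ref{Superdiffusive process convergence} applies. Your computation of the constants $p(\gamma+1)-\gamma=\tfrac12$ and $\frac{p(\gamma+1)}{p(\gamma+1)-\gamma}=2\gamma+1$ at $p=p_c$, together with the scaling conversion via $a_n^2\mu_n^2\sim C_\mu\,\ell_n^{1/(\gamma+1)}/n$, fills in precisely the bookkeeping the paper leaves implicit.
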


We now discuss examples where Corollary~\ref{corr: superdiffusive a.s. conv eg 1} holds. We consider examples analogous to those corresponding to Corollary~\ref{corr: nlogn scale}. Since we require $\alpha < -(\gamma+1) <0$,  we cannot have analogue to Example~\ref{ex: zeta alpha zero pos}, where $\alpha=0$. Also, in the following examples, we note down the corresponding memory sequence $\{\mu_n\}$ and the scale ${1}/({a_n\mu_n})$ only. As noted earlier, the scales below are actually of order larger than $\sqrt{n \log n}$. Again, we assume $B=1$ in~\eqref{eq: RV f}.
\begin{example}[$\zeta\equiv 0$, $\alpha+\gamma+1<0$, $\gamma+1/2>0$] \label{ex: zeta neg}
    In this case $\mu(x)=x^{\gamma}(\log x)^\alpha$ and the scale is $\sqrt{n(\log n)^{-{\alpha}/{(\gamma+1)}}}$.
\end{example}
\begin{example}[$\zeta(x)={\kappa(1-\rho)}/{x^{\rho}}$ with $\kappa\ne0$ and $0<\rho<1$, $\alpha+\gamma+1<0$, $\gamma+1/2>0$] \label{ex: zeta power neg}
    In this case $\mu(x)=x^{\gamma}(\log x)^\alpha \exp(\kappa(\log \log x)^{1-\rho})$ and the scale is 
    $$\left[n(\log n)^{-{\alpha}/{(\gamma+1)}}\exp{\l(-{\kappa}(\gamma+1)^{-1}(\log\log n)^{1-\rho}\r)}\right]^{1/2}.$$
\end{example}
\begin{example}[$\zeta(x)=\kappa({x}^{-1} \wedge 1)$ with $\kappa\ne0$, $\alpha+\gamma+1<0$, $\gamma+1/2>0$] \label{ex: log log neg}
    Here we have $\rho=1$. In this case $\mu(x)=x^{\gamma}( \log x)^{\alpha}(e \log\log x)^{\kappa}$ and the scale is 
    $\left[n(\log n)^{-{\alpha}/({\gamma+1})}(\log\log n)^{-{\kappa}/({\gamma+1})}\right]^{1/2}$.
\end{example}

We end this subsection with examples where $\alpha+\gamma+1=0$. As $\alpha = -(\gamma+1) <0$, again, there is no analogue of Example~\ref{ex: zeta alpha zero pos}. The sequence $\{v_n\}$ may be bounded or unbounded and the convergence may be almost sure (also, $L^2$) or in distribution. In all cases, the scales are heavier than $\sqrt{n \log n}$, the usual scaling for the critical SRRW. The scales have powers of additional iterates of logarithms.  In Examples~\ref{ex: zeta zero}--~\ref{ex: log log as}, we get the scales from Theorem~\ref{rates l_n eg 1} and then obtain the limit using Theorem~\ref{Supercritical weak convergence} for unbounded $\{v_n\}$, or using Theorem~\ref{Superdiffusive process convergence} for bounded $\{v_n\}$. We provide only the function $\mu$ and the limit in each case. We continue to use $B=1$ in~\eqref{eq: RV f}.
\begin{example}[$\zeta\equiv0$, $\alpha+\gamma+1=0$, $\gamma+1/2>0$] \label{ex: zeta zero}
    In this case $\mu(x) = x^{\gamma} (\log x)^{-(\gamma+1)}$ and $\{v_n\}$ is unbounded. Then, we have 
    $\l({S_{\floor{nt}}}/{\sqrt{n\log n\log\log n}}: t\ge 0\r)\to_{\rm{w}} \l((2\gamma+1)\sqrt{t}Z: t\geq 0\r)$ in $(D([0,\infty)), \mathcal D)$.
\end{example}
\begin{example}[$\zeta(x)={\kappa{x^{-\rho}}(1-\rho)}$ with $\kappa> 0$ and $0<\rho<1$, $\alpha+\gamma+1=0$, $\gamma+1/2>0$] \label{ex: zeta power zero}
    In this case, we have
    $\mu(x)=x^{\gamma}(\log x)^{-(\gamma+1)} \exp(\kappa(\log\log x)^{1-\rho})$.
    Then $\{v_n\}$ is unbounded, giving
    $({S_{\floor{nt}}}/{\sqrt{n\log n(\log\log n)^\rho}} : t\geq 0)\to_{\rm{w}} \l((2\gamma+1)\sqrt{(\gamma+1)t/(\kappa(1-\rho)})Z: t\geq 0\r)$ in $(D([0,\infty)), \mathcal D)$.
\end{example}
\begin{example}[$\zeta(x)= {\kappa {x^{-\rho}} (1-\rho)}$ with $\kappa < 0$ and $0<\rho<1$, $\alpha+\gamma+1=0$, $\gamma+1/2>0$] \label{ex: zeta power kappa neg zero}  The function $\mu$ is as in Example~\ref{ex: zeta power zero}, but as $\kappa<0$, $\{v_n\}$ is bounded. Then, with $K_\infty$ being a nonrandom multiple of $M_\infty$, we get
\[  \l( {\frac{1}{\sqrt{n\log n}} } \exp \l( \frac{\kappa}{2(\gamma+1)} (\log \log n)^{1-\rho} \r) S_{\floor{nt}}: t\geq 0 \r) \to \l(\sqrt{t} K_\infty: t\geq 0\r),\]
almost surely and in $L^2$ in $D([0,\infty))$
\end{example}
The following three examples correspond to the case $\rho=1$.
\begin{example}[$\zeta(x)=\kappa(x^{-1} \wedge 1)$ with $\kappa+\gamma+1>0$, $\alpha+\gamma+1=0$, $\gamma+1/2>0$] \label{ex: log log zero}
    In this case, $\mu(x)=x^{\gamma}(\log x)^{-(\gamma+1)} (e \log\log x)^{\kappa}$ and $\{v_n\}$ is unbounded. Then, we have,
    $({S_{\floor{nt}}}/{\sqrt{n\log n\log\log n}}: t\geq 0)\to_{\rm{w}} ((2\gamma+1)\sqrt{{(\gamma+1)t}/(\kappa+\gamma+1)}Z: t\geq 0)$ {in $(D([0,\infty)), \mathcal D)$}.
\end{example}
\begin{example}[$\zeta(x)=\kappa({x^{-1}} \wedge 1)$ with $\kappa+\gamma+1=0$, $\alpha+\gamma+1=0$, $\gamma+1/2>0$] \label{ex: log log log zero}
    Here $f(x) = (e x \log x)^{-(\gamma+1)}$ and $\{v_n\}$ is unbounded. Then,  $({S_{\floor{nt}}}/{\sqrt{n\log n\log\log n\log\log\log n}}: t\geq 0)\to_{\rm{w}} ((2\gamma+1)\sqrt{t}Z: t\geq 0)$ {in $(D([0,\infty)), \mathcal D)$}. The product of the iterates of logarithm in the space scale is interesting.
\end{example}
\begin{example}[$\zeta(x)=\kappa({x^{-1}} \wedge 1)$ with $\kappa < \alpha = -(\gamma+1)<0$] \label{ex: log log as}
Here $\mu$ is as in Example~\ref{ex: log log zero}. With $\kappa<0$, $\{v_n\}$ is bounded. With $\widetilde K_\infty$ being a nonrandom multiple of $M_\infty$, we have
    $({S_{\floor{nt}}}/{\sqrt{n\log n(\log\log n)^{-{\kappa}/{(\gamma+1)}}}}: t\geq 0) \to \l(\sqrt{t} \widetilde K_{\infty}: t\geq 0\r)$
{almost surely and in $L^2$ in $D([0,\infty))$}. 
\end{example}

\subsection{Slower than \texorpdfstring{$n\log n$}{nlog n} growth}{\label{General RV with rho neq 1}}
We next consider a large class of slowly varying functions, so that the resulting SRRW-RVM grows at a rate slower than $\sqrt{n \log n}$ in the critical regime. As in Section~\ref{logn RV examples}, we consider a regularly varying function $\mu$ of index $\gamma$, so that $\mu_n = \mu(n)$ and $\mu(x) = x^\gamma \ell(x)$. We make the following assumptions on the slowly varying $\ell$.
\begin{assum}
    \label{assum: eg 2}
    We assume that the function $\mu(x) = x^\gamma \ell(x)$ satisfies:
    \begin{enumerate}
        \item  For some $B>1$, the slowly varying function $\ell(x)$ has the form,  for $x>B$,
$\ell(x) = \exp \l( \int^{\log x}_{ \log B} \delta(s) ds \r)$.
\item The function $\delta$ is monotone decreasing to $0$ as $x\to\infty$ and integrable on $(\log B, x)$ for all $x>0$. \label{item: dec}
\item We assume that $\int^\infty_{ \log B} \delta(x)=\infty$ and that $\delta$ is regularly varying of index $-\rho$, for some $\rho\in(0,1]$. Also assume that $\delta$ is differentiable at least $m$ times where $m\rho>1$, with $\delta^{(k)}$ being the derivative of the $k$-th order, for $1\le k\le m$, and that $|\delta^{(k)}|$ is regularly varying of index $-(\rho+k)$ for all $1\le k\le m$. (Further, denote the function $\delta$ as  $\delta^{(0)}$.) \label{item: RV derivatives}
    \end{enumerate}
\end{assum}
\begin{remark}
    As in Remark~\ref{rem: assmp}, the slowly varying function $\ell$ is motivated as a special case of Karamata's representation. In particular, we assume $\rho>0$ to ensure the existence of the integer $m$, which is crucial for estimating the order of $\{a_n\}$ in Theorem~\ref{prop: eg 2}. 
    
    The slowly varying function is still given by $\ell(x) = f(\log x)$, where 
    \begin{equation}
        \label{eq: f slow}
        f(x) = \exp \l(\int^x_{ \log B} \delta(s) ds\r).
    \end{equation}
    Compare~\eqref{eq: f slow} with~\eqref{eq: RV f}.
    Examples of such functions $\delta$ include ${x^{-\rho}}$, $x^{-1}(\log x)^{-\alpha}$ with $0<\alpha<1$.
\end{remark}

We shall show that the functions $\mu$ satisfying Assumption~\ref{assum: eg 2} lead to another novel class of scalings for the corresponding SRRW-RVM in the critical regime. We estimate $\{a_n^2\mu_n^2\}$, $\{v_n^2\}$ and $\{\sigma_n^2\}$ under Assumption~\ref{assum: eg 2}. Since, $\delta$ is differentiable $m$ times, so is the function $f$, with the $k$-th derivative denoted by $f^{(k)}$, for $1\le k\le m$.
\begin{lemma}{\label{lem: eg 2}}
    Suppose the function $\mu$ satisfies Assumption~\ref{assum: eg 2}. For $1\le k\le m$,
    $g_k(x):={f^{(k)}(x)}/{f(x)} -(\delta(x))^k$
    is a polynomial in $\delta^{(l)}(x)$, $0\le l<k$, and no term is solely a power of $\delta(x)$. In particular, we also have $g_k(x)=\Oh(|\delta'(x)|)$, for all $1\le k\le m$.
\end{lemma}
\begin{proof}
    We use induction. For $k=1$,~\eqref{eq: f slow} gives $f'(x)= \delta(x) f(x)$, leading to $g_1\equiv 0$.
    
    We have $f^{(k)}(x) = f(x) \l(g_k(x) + \delta(x)^k \r)$. Differentiating and using the expression for $f'(x)$, we get 
    $f^{(k+1)}(x) = f(x) \l(g_k'(x) + k \delta(x)^{k-1} \delta'(x) + g_k(x) \delta(x) + \delta(x)^{k+1} \r)$,
    giving
    $g_{k+1}(x) = g_k'(x) + k \delta(x)^{k-1} \delta'(x) + g_k(x) \delta(x)$.

    Assume the result holds for some $k\ge 1$. By the induction hypothesis, the last two terms for $g_{k+1}$ are polynomials with the required properties. Since $\delta(x)\to0$, they are also $\Oh(|\delta'(x)|)$. The first term of $g_{k+1}$ has the same properties due to the properties of $g_k$. Comparing the indices of regular variation, $g_k'(x) = \Oh(|\delta'(x)|)$ proving the induction step.
\end{proof}
\begin{theorem}{\label{prop: eg 2}}
    Let the function $\mu$ satisfy  Assumption~\ref{assum: eg 2}. Then for $p=p_c$, we have $a_n^2\mu_n^2 \sim C_{\mu}  \ell_n^{{1}/{(\gamma+1)}}/n$, with $C_{\mu}$ a positive constant, depending on the memory sequence $\{\mu_n\}$. Further, if $\int_1^\infty f(s)^{1/(\gamma+1)} ds = \infty$, then we also have
    \[
    v_n^2 \sim C_\mu \int_1^{\log n} f(s)^{\frac{1}{\gamma+1}} ds \qquad \text{and} \qquad \sigma_n^2\sim {n}{\ell_n^{-\frac{1}{\gamma+1}}} \int_1^{\log n} f(s)^{\frac{1}{\gamma+1}} ds.
    \]
\end{theorem}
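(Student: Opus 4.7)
The proof strategy parallels that of Theorem~\ref{rates l_n eg 1}, and again the central input is Corollary~\ref{cor: log a_n}, which reduces the asymptotics of $\{a_n\}$ to estimating $\sum_{k=1}^{n-1} \mu(k+1)/\nu(k+1)$. The crucial difference is that $f$ in~\eqref{eq: f slow} is slowly varying itself, so a single integration by parts on $\nu(x) = \int_1^x s^\gamma f(\log s)\, ds$ reduces the remainder only by the factor $\delta(\log s) \to 0$, which decays too slowly. My plan is to iterate integration by parts $m$ times, where $m$ is the integer in Assumption~\ref{assum: eg 2}\eqref{item: RV derivatives} satisfying $m\rho > 1$, to obtain
\[
\nu(x) = \frac{x^{\gamma+1}}{\gamma+1}\sum_{k=0}^{m-1} \frac{(-1)^k f^{(k)}(\log x)}{(\gamma+1)^k} + \frac{(-1)^m}{(\gamma+1)^m}\int_1^x s^\gamma f^{(m)}(\log s)\,ds + O(1).
\]

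Applying Lemma~\ref{lem: eg 2} to write $f^{(k)}(\log x) = f(\log x)[\delta(\log x)^k + g_k(\log x)]$ with $g_k = O(|\delta'|)$, and bounding the residual integral by a Karamata-type estimate on the integrand $\mu(s)[\delta(\log s)^m + g_m(\log s)]$, I would deduce
\[
\frac{\nu(x)}{x\mu(x)} = \frac{1}{\gamma+1}\bigl[T_m(\delta(\log x)) + O(|\delta'(\log x)|) + O(\delta(\log x)^m)\bigr],
\]
where $T_m(u) = \sum_{k=0}^{m-1}(-u/(\gamma+1))^k$. Since $T_m(u)\to 1$ and $1/T_m(u) = 1 + u/(\gamma+1) + O(u^m)$ for small $u$, inverting yields
\[
\frac{\mu(n)}{\nu(n)} = \frac{\gamma+1}{n} + \frac{\delta(\log n)}{n} + \frac{1}{n}\,O\bigl(\delta(\log n)^m + |\delta'(\log n)|\bigr).
\]

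Substituting into Corollary~\ref{cor: log a_n} and applying the integral test (monotonicity being inherited from that of $\delta$), the leading term contributes $(\gamma+1/2)\log n$, the $\delta$-term contributes $\frac{\gamma+1/2}{\gamma+1}\int_0^{\log n}\delta(s)\,ds = \frac{\gamma+1/2}{\gamma+1}\log\ell_n + O(1)$, while the error terms sum to $O(1)$ because $\sum_k \delta(\log k)^m/k$ converges precisely under $m\rho > 1$ (via change of variables and regular variation) and $\sum_k |\delta'(\log k)|/k$ converges by monotonicity of $\delta$. Exponentiating gives $a_n \sim C_\mu n^{-(\gamma+1/2)}\ell_n^{-(\gamma+1/2)/(\gamma+1)}$, and multiplying by $\mu_n^2 = n^{2\gamma}\ell_n^2$ produces $a_n^2 \mu_n^2 \sim C_\mu \ell_n^{1/(\gamma+1)}/n$. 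For the remaining claims under $\int_1^\infty f(s)^{1/(\gamma+1)}\,ds = \infty$, I would verify that $(d/ds)[\ell(s)^{1/(\gamma+1)}/s]$ is eventually negative using $\delta(\log s)\to 0$, apply the integral test with the substitution $t = \log s$ to obtain $v_n^2 \sim C_\mu \int_1^{\log n} f(t)^{1/(\gamma+1)}\,dt$, and divide by $a_n^2 \mu_n^2$ to obtain the stated formula for $\sigma_n^2$.

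The main obstacle is choosing the iteration depth $m$ correctly and controlling the residual integral after $m$ integrations by parts. The condition $m\rho > 1$ in Assumption~\ref{assum: eg 2}\eqref{item: RV derivatives} is exactly what makes the leftover $\delta(\log\cdot)^m/\cdot$ summable, while the structural description of $g_k$ in Lemma~\ref{lem: eg 2} (namely, that no term of $g_k$ is a pure power of $\delta$) is essential for absorbing the lower-order corrections into an $O(|\delta'|)$ term that is automatically summable by monotonicity.
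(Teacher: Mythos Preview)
Your proposal is correct and follows essentially the same route as the paper's proof: iterate integration by parts $m$ times (with $m\rho>1$), use Lemma~\ref{lem: eg 2} to express $f^{(k)}/f$ as $\delta^k + g_k$ with $g_k=O(|\delta'|)$, recognise the resulting partial geometric sum, invert to obtain $\mu(n)/\nu(n) = (\gamma+1)/n + \delta(\log n)/n + (\text{summable error})/n$, and then feed this into Corollary~\ref{cor: log a_n}. The only cosmetic difference is that the paper bounds the $|\delta'|$-error via $|\delta'(x)| = O(x^{-(1+\rho/2)})$ from regular variation, whereas you invoke monotonicity of $\delta$ directly; both arguments work.
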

\begin{proof}
    Successive applications of integration by parts gives us,
    \begin{equation} \label{eq: int by parts m}
    \nu(x)=\frac{x^{\gamma+1}}{\gamma+1} \l[ f(\log x) + \sum_{k=2}^m (-1)^{k-1} \frac{f^{(k-1)}(\log x)}{(\gamma+1)^{k-1}} \r] + \frac{(-1)^m}{(\gamma+1)^m} \int_{0}^{x} s^\gamma f^{(m)}(\log s) ds + \Oh(1).
    \end{equation}
    Using $|\delta'(\log x)|$ and $\delta(\log x)^m$ slowly varying, and Lemma~\ref{lem: eg 2}, the integral on the right side is of the order of
    $\int_0^x s^\gamma ( |\delta'(\log s)| + \delta(\log s)^m) f(\log s) ds 
    \sim x^{\gamma+1} ( |\delta'(\log x)| + \delta(\log x)^m) \ell(x)/ (\gamma+1)$.
    
    Then, from~\eqref{eq: int by parts m}, using Lemma~\ref{lem: eg 2}, we get
    \begin{align*}
    \nu(x) 
    &= \frac{x^{\gamma+1}\ell(x)}{\gamma+1} \l[ 1 + \sum_{k=1}^{m-1} (-1)^{k} \l( \frac{\delta(\log x)}{\gamma+1} \r)^{k} + \Oh \l( \delta'(\log x) \r) + \Oh \l( \delta(\log x)^m \r) \r]\\
    &=\frac{x\mu(x)}{\gamma+1} \l( 1+\frac{\delta(\log x)}{\gamma+1} \r)^{-1} \l[ 1 + \Oh \l( \delta'(\log x) \r) + \Oh \l( \delta(\log x)^m \r) \r].
    \end{align*}
    Using $|\delta'|$ is regularly varying of index $-(1+\rho)$ and thus $\delta'(x) =\Oh (x^{-(1+\rho/2)})$, we have
    \begin{equation}
    \frac{\mu(n)}{\nu(n)} = 
    \frac{\gamma+1}{n}+\frac{\delta(\log n)}{n}+ \Oh \l(\frac{(\log n)^{-(1+{\rho}/{2})}}{n}\r)+ \Oh \l(\frac{\delta(\log n)^m}{n}\r). \label{eq: delta log n}
    \end{equation}
    
    Since $\delta(x)^m$ is monotone and regularly varying of index $-\rho m<-1$, the last two quantities are summable, giving
    $\sum_{k=1}^{n-1} {\mu(k+1)}/{\nu(k+1)} = (\gamma+1)\log n+\int_{0}^{\log n}\delta(s)ds - \log C_\mu/2$.
    Then Corollary~\ref{cor: log a_n} gives the required rate for the sequence $\{a_n^2 \mu_n^2\}$. As in the proof of Theorem~\ref{rates l_n eg 1} the function $x \mapsto x^{-1} f(\log x)^{1/(\gamma+1)}$ is eventually decreasing. Approximating $v_n^2 = \sum_{k=1}^n a_k^2 \mu_k^2$ by the corresponding integral, the rates for the sequences $\{v_n^2\}$ and $\{\sigma_n^2\}$ follow.
\end{proof}

We now provide an illustration of the rates obtained in Theorem~\ref{prop: eg 2}, applied to Theorem~\ref{Supercritical weak convergence}. Significantly, the scaling here is lighter than the traditional scaling $\sqrt{n \log n}$, in contrast to the examples considered in Section~\ref{logn RV examples}.
\begin{example}[$\delta(x)= (1-\rho)x^{-\rho}$ with $0<\rho<1$, $\gamma>-1/2$, $B=1$] \label{ex: lighter than n log n}
    In this case, we have $\mu(x)=x^{\gamma} \exp \l( (\log x)^{1-\rho} \r)$ and
    \[
    \l(\frac{S_{\floor{nt}}}{\sqrt{n(\log n)^\rho}}: t\geq 0\r)\to_{\rm{w}}\left((2\gamma+1)\sqrt{\frac{(\gamma+1) t}{1-\rho} } Z: t\geq 0\right) \quad \text{in $(D([0,\infty)), \mathcal D)$.}
    \]
\end{example}

\begin{remark}
    Clearly $p_c\to1$, as $\gamma\to\infty$. Thus, for a fixed $p<1$, for any memory sequence $\{\mu_n\}$ with large enough index of regular variation $\gamma$, we have $p<p_c$ and the SRRW-RVM will be in the subcritical regime with $\sqrt{n}$-diffusive scaling. As the memory sequence $\{\mu_n\}$ becomes heavier, the random variable $\beta_n$ puts more weight on $n-1$. Thus, the SRRW-RVM selects the same last step a geometric number of times before getting replaced by a fresh innovation to be repeated again an independent geometric number of times. Thus, we get fewer number (based on the recollection probability $p$) of distinct i.i.d.\ steps. This motivates the $\sqrt{n}$-diffusive scaling and the limiting variance dependent on $p$. 
    
    In summary, heavier memory sequence begets lighter scaling, as illustrates in the examples of Sections~\ref{logn RV examples} and~\ref{General RV with rho neq 1}. However, Theorem~\ref{prop: eg 2} does not analyze one of the heaviest memory sequences given by $\rho=0$. The main difficulty is the summability of $\delta(\log n)^m/n$ in~\eqref{eq: delta log n}. One example of such memory sequence is $\mu_n = n^\gamma \exp \l( {\log n}/{\log \log n} \r)$. 
\end{remark}
    
    This suggests the open problem:
    \begin{open}
        For $\gamma>-1/2$, the memory sequence
        $\mu_n = n^\gamma \exp \l( {\log n}/{\log \log n} \r)$ and $p=p_c$, we have the weak convergence of the scaled SRRW-RVM $S_n/{\sqrt{n \log \log n}}$ to a centered Gaussian variable.
    \end{open}
\subsection{Nonlinear time scale and time dependent space scale for process weak limit in the critical regime}{\label{subsec: non lin time scale}}
It was proved in Theorem \ref{Supercritical weak convergence} that the scaled SRRW-RVM, with linearly scaled time, converges weakly to a random element in $D[0,\infty)$, whose paths are a Gaussian multiple of the square root function, under the critical regime $p=p_c$ and unbounded $\{v_n\}$. The space scaling is free of time. They include the usual SRRW and the model considered in~\cite{Laulin2022}. However, for these models, generally an exponential time scale $\floor{n^t}$ is considered, along with a time dependent space scale $\sqrt{n^t \log n}$, to obtain a Brownian motion limit. In fact, Theorem~1.5 of~\cite{Bertenghi2022} shows that ${S_{\floor{n^t}}}/{\sqrt{n^t\log n}}$, converges weakly to the standard Brownian motion in $(D[0,\infty),\mathcal{D})$.  Using examples in Sections~\ref{logn RV examples} and~\ref{General RV with rho neq 1}, we show such an exponential time scale and time dependent space scale may not lead to the Brownian motion limit for all memory sequences $\{\mu_n\}$. We also provide the correct time and space scales to obtain Brownian process limit in each example.

{
\renewcommand{\thetheorem}{\ref{ex: zeta zero}}
\begin{example}[Continued]
\addtocounter{theorem}{-1}
Here $\mu(x) = x^\gamma (\log x)^{-(\gamma+1)}$ satisfies Assumption~\ref{eg 1} with $\alpha+\gamma+1=0$ and $\zeta\equiv 0$. The martingale difference array, defined in \eqref{eq: mg array in critical}, will be critical for the analysis. We study the functional central limit theorems for the martingale sampled along the time scales $\floor{n^t}$ and $\exp \l( (\log n)^t \r)$, where the latter gives us the required Brownian motion limit. We start with the quadratic variation process along the required time sequences. 
\end{example}
}

\begin{lemma}{\label{lem: QV of M_n for general time scale}}
    Let $\{S_n\}_{n\geq 0}$ be the SRRW-RVM with zero mean, unit variance innovation sequence $\{\xi_n\}$, the memory sequence $\mu_n = n^\gamma (\log n)^{-(\gamma+1)}$ and the recollection probability $p=p_c$. Then, the martingale difference array defined in~\eqref{eq: mg array in critical} satisfies, for $t>0$,    
    $\ang{\,\widecap{M}\,}_{n,\floor{\exp \l( (\log n)^t \r)}} \to t$, and 
    $\ang{\, \widecap{M} \,}_{n,\floor{n^t}} \to 1$ {in probability}.   
\end{lemma}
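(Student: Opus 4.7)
The plan is to follow the same decomposition already used in the proof of Lemma~\ref{prop: QV proc in the critical regime}, specialized to the two new time scales. Writing $K_n(t)$ for either $\floor{\exp((\log n)^t)}$ or $\floor{n^t}$, the quadratic variation of the martingale array is
\[
\ang{\,\widecap{M}\,}_{n,K_n(t)} = \frac{1}{v_n^2}\sum_{k=1}^{K_n(t)} a_k^2\mu_k^2\,\mathbb{E}(X_k^2\mid\mathcal{F}_{k-1}) \;-\; \frac{p^2}{v_n^2}\sum_{k=1}^{K_n(t)-1} a_{k+1}^2\mu_{k+1}^2\left(\frac{M_k}{a_k\nu_k}\right)^2.
\]
I first show the correction (second) term is $L^1$-negligible. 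By Lemma~\ref{lem: conv M}, $M_k/(a_k\nu_k) \to 0$ in $L^2$, so $(M_k/(a_k\nu_k))^2 \to 0$ in $L^1$. The weights $w_k^{(n)} = a_{k+1}^2\mu_{k+1}^2/v_n^2$ satisfy $\sum_{k=1}^{K_n(t)-1} w_k^{(n)} \sim v_{K_n(t)}^2/v_n^2$, which we will see is bounded along both time scales. Since $v_n \to \infty$, the weights tend to $0$ pointwise in $k$, so a standard splitting argument (fixing $N$ large enough that $\mathbb{E}(M_k/(a_k\nu_k))^2 < \epsilon$ for $k > N$, then letting $n \to \infty$ with $N$ fixed) gives the negligibility in $L^1$.

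For the main (first) term, I would invoke Lemma~\ref{lem: cond second mom} which provides $\mathbb{E}(X_k^2\mid\mathcal{F}_{k-1}) \to 1$ in $L^1$. The same splitting argument applied to the normalized weights $a_k^2\mu_k^2/v_n^2$ (which again have bounded partial sums and tend pointwise to $0$) yields
\[
\frac{1}{v_n^2}\sum_{k=1}^{K_n(t)} a_k^2\mu_k^2\,\mathbb{E}(X_k^2\mid\mathcal{F}_{k-1}) \;-\; \frac{v_{K_n(t)}^2}{v_n^2} \;\xrightarrow{L^1}\; 0.
\]
Combining the two steps, the problem reduces to computing $\lim_n v_{K_n(t)}^2/v_n^2$ along each time scale.

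For this final step, I use the explicit rate established in Example~\ref{ex: zeta zero}, namely $v_n^2 \sim C_\mu \log\log n$ under the memory sequence $\mu_n = n^\gamma(\log n)^{-(\gamma+1)}$. For $K_n(t) = \floor{\exp((\log n)^t)}$ we obtain $\log\log K_n(t) = t\log\log n + o(\log\log n)$, hence $v_{K_n(t)}^2/v_n^2 \to t$; for $K_n(t) = \floor{n^t}$ we obtain $\log\log K_n(t) = \log\log n + O(1)$, hence $v_{K_n(t)}^2/v_n^2 \to 1$. This yields the two claimed limits.

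No step is a genuine obstacle; the only care required is in the Cesaro-style splitting, where the weights $w_k^{(n)}$ depend on $n$ (through $v_n$) but their partial sums remain uniformly bounded and their pointwise limits vanish, which together suffice to pass the convergence $Z_k \xrightarrow{L^1} 0$ through the weighted sum. The calculation of the ratios $v_{K_n(t)}^2/v_n^2$ is immediate from the slow-varying behaviour $v_n^2 \sim C_\mu\log\log n$.
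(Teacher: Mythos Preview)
Your proposal is correct and follows essentially the same approach as the paper: the paper's proof simply records the asymptotics $v_{\floor{\exp((\log n)^t)}}^2 \sim t\,v_n^2$ and $v_{\floor{n^t}}^2 \sim v_n^2$ (from $v_n^2 \sim C_\mu\log\log n$) and then invokes the proof of Lemma~\ref{prop: QV proc in the critical regime} verbatim with $\ntfloor$ replaced by the new time scales. You have unpacked that reference explicitly, carrying out the same Cesaro-type splitting (which the paper packages as Lemma~\ref{lem: cesaro}) and the same use of Lemmas~\ref{lem: cond second mom} and~\ref{lem: conv M}; the two proofs are the same in substance.
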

\begin{proof}
Theorem~\ref{rates l_n eg 1} gives $v_n^2 \sim C_\mu \log \log n$. Then, for every $t>0$,  as $n\rightarrow \infty$,
${v_{\floor{ \exp\l( (\log n)^t \r)}}^2} \sim t {v_n^2}$ and ${v_{\floor{n^t}}^2} \sim {v_n^2}$.
The results follow as in Lemma~\ref{prop: QV proc in the critical regime} with $\floor{n^t}$ and $\floor{\exp\l((\log n)^t\r)}$ replacing $\floor{nt}$.
\end{proof}
Similarly, the Lindeberg conditions hold as in Lemma~\ref{lemma: cond lind cond in critical case} with $\floor{n^t}$ and $\floor{\exp\l((\log n)^t\r)}$ replacing $\floor{nt}$.
\begin{lemma}{\label{lem: cond Lind for general time scale}}
    Let $\{S_n\}$ be the SRRW-RVM with zero mean, unit variance innovation sequence $\{\xi_n\}$, $\mu_n = n^\gamma (\log n)^{-(\gamma+1)}$ and $p=p_c$. Then, the martingale difference array defined in \eqref{eq: mg array in critical} satisfies, for $t>0$,
    \begin{align*}
    \sum_{k=1}^{\floor{n^t}} \bb{E} \l( (\Delta\widecap{M}_{n,k})^2\mathbbm{1}_{\{|\Delta \widecap{M}_{n,k}| >\epsilon \}} | \mathcal{F}_{k-1} \r) &\to_{\rm{P}} 0 \\
    \sum_{k=1}^{\floor{\exp\l((\log n)^t\r)}} \bb{E} \l( (\Delta\widecap{M}_{n,k})^2\mathbbm{1}_{\{|\Delta \widecap{M}_{n,k}| >\epsilon \}} | \mathcal{F}_{k-1} \r) &\to_{\rm{P}} 0.
    \end{align*}
\end{lemma}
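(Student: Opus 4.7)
The strategy is to imitate the argument of Lemma~\ref{lemma: cond lind cond in critical case} almost verbatim, with the only change being the choice of the sampling index. That proof rested on the boundedness of the ratio $v_\ntfloor/v_n$ supplied by Lemma~\ref{lem: ell}; here the analogous boundedness, in fact an exact asymptotic, is furnished by~\eqref{eq: v subseq} established inside Lemma~\ref{lem: QV of M_n for general time scale} for both $K_n = \floor{n^t}$ and $K_n = \floor{\exp((\log n)^t)}$. Note that in the setting of Example~\ref{ex: zeta zero} the sequence $\{v_n\}$ is unbounded, so $v_n\to\infty$; this is the only feature of the model that will be used beyond~\eqref{eq: v subseq}.

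First, I fix $t>0$ and let $K_n$ denote either of the two sampling indices. Lemma~\ref{lem: index} gives $\{a_n\mu_n\}\in RV_{-1/2}$ for $p=p_c$, so $\{a_n\mu_n\}$ is bounded. Combined with~\eqref{eq: v subseq}, there is a finite constant $\widecap{C}_t>0$ for which, for all $1\le k\le K_n$,
\[
|\Delta\widecap{M}_{n,k}| \;=\; \frac{a_k\mu_k}{v_n}|\Delta L_k| \;\le\; \frac{\widecap{C}_t}{v_{K_n}}|\Delta L_k| \;\le\; \frac{\widecap{C}_t}{v_k}|\Delta L_k|,
\]
the last step using that $\{v_k\}$ is nondecreasing. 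Hence $\{|\Delta\widecap{M}_{n,k}|>\epsilon\}\subseteq\{|\Delta L_k|>\epsilon v_k/\widecap{C}_t\}$, and Lemma~\ref{lem: key Lind} applied with $\lambda_k:=v_k$ (a sequence increasing to infinity) yields
\[
\bb{E}\l((\Delta L_k)^2\mathbbm{1}_{\{|\Delta L_k|>\epsilon v_k/\widecap{C}_t\}}\,\big|\,\cal{F}_{k-1}\r) \xrightarrow{L^1} 0.
\]

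Finally, I dominate the Lindeberg sum by
\[
\sum_{k=1}^{K_n} \bb{E}\l((\Delta\widecap{M}_{n,k})^2\mathbbm{1}_{\{|\Delta\widecap{M}_{n,k}|>\epsilon\}}\,\big|\,\cal{F}_{k-1}\r) \le \sum_{k=1}^{K_n} \frac{a_k^2\mu_k^2}{v_n^2}\, \bb{E}\l((\Delta L_k)^2\mathbbm{1}_{\{|\Delta L_k|>\epsilon v_k/\widecap{C}_t\}}\,\big|\,\cal{F}_{k-1}\r),
\]
and observe that the weights satisfy $\sum_{k=1}^{K_n} a_k^2\mu_k^2/v_n^2 = v_{K_n}^2/v_n^2$, which is bounded by~\eqref{eq: v subseq}. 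Applying the Cesaro-averaging Lemma~\ref{lem: cesaro} to this bounded weighted average of $L^1$-negligible conditional expectations gives convergence in probability of the full Lindeberg sum to zero, finishing both assertions at once. I do not anticipate any real obstacle: beyond the linear-time argument of Lemma~\ref{lemma: cond lind cond in critical case}, the only new input needed is the subsequence ratio bound $v_{K_n}/v_n = O(1)$, which~\eqref{eq: v subseq} delivers for both choices of $K_n$.
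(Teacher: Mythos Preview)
Your proposal is correct and follows essentially the same route as the paper: both argue by transplanting the proof of Lemma~\ref{lemma: cond lind cond in critical case} to the new sampling indices, replacing the ratio bound $v_{\ntfloor}/v_n=O(1)$ from Lemma~\ref{lem: ell} with the subsequence asymptotics~\eqref{eq: v subseq}, and then closing via Lemma~\ref{lem: key Lind} and Lemma~\ref{lem: cesaro}. Your write-up is in fact more explicit than the paper's one-line proof, spelling out the verification of the hypotheses of Lemma~\ref{lem: cesaro} (in particular that $\sum_{k=1}^{K_n} a_k^2\mu_k^2/v_n^2 = v_{K_n}^2/v_n^2$ has a positive finite limit and that the partial sums up to any fixed $K$ vanish since $v_n\to\infty$).
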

Theorem~2.5 of~\cite{Durrett1978}, Lemmas~\ref{lem: QV of M_n for general time scale} and~\ref{lem: cond Lind for general time scale} give the process convergence for $\{M_n\}$ along these time sequences.
\begin{lemma}{\label{Martingale M_n general FCLT}}
    Let $\{S_n\}_{n\geq 0}$ be the SRRW-RVM with zero mean, unit variance innovation sequence $\{\xi_n\}_{n\ge 1}$, the memory sequence $\mu_n = n^\gamma (\log n)^{-(\gamma+1)}$ and the recollection probability $p=p_c$. Then, we have, on $D((0,\infty))$,
    \begin{align*}
        \l(\frac1{\sqrt{n^t \log n \log \log n}}{p \eta_{\floor{n^t}} M_{\floor{n^t}}}: t>0\r) &\to_{\rm{w}} \l( (2\gamma +1)\sqrt{t} B(1): t>0\r), \quad \text{and} \\
        \l(\frac{p \eta_{\floor{\exp \l((\log n)^t\r)}} M_{\floor{\exp\l( (\log n)^t\r)}}}{\sqrt{\exp\l( (\log n)^t \r) (\log n)^t \log \log n}} : t>0\r) & \to_{\rm{w}} ((2\gamma+1) B(t): t> 0).
    \end{align*}
\end{lemma}

Finally, using Lemma~\ref{prop: L^2 in prob lim of mg M}, we show the negligibility of the scaled process of
the martingale $N_n$.
\begin{lemma}
    \label{lem: N gen neg}
    Let $\{S_n\}_{n\geq 0}$ be the SRRW-RVM with zero mean, unit variance innovation sequence $\{\xi_n\}$, the memory sequence $\mu_n = n^\gamma (\log n)^{-(\gamma+1)}$ and the recollection probability $p=p_c$. Then, for the martingale $N_n$, defined in~\eqref{eq: N}, we have, for all $t>0$,
    \[
    \frac{N_{\floor{n^t}}}{\sqrt{n^t \log n\log\log n}} \to_{\rm{P}} 0 \quad \text{and} \quad \frac{N_{\floor{\exp\l( (\log n)^t\r)}}}{\sqrt{\exp\l( (\log n)^t \r) (\log n)^t \log \log n}} \to_{\rm{P}} 0.
    \]
\end{lemma}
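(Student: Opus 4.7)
The plan is to upgrade the claim from convergence in probability to $L^2$-convergence to zero, which is immediate via a direct second moment estimate using equation~\eqref{eq: N sq mom} from Lemma~\ref{prop: L^2 in prob lim of mg M}. The hypothesis there is simply that the innovation sequence is centered with unit variance, which is maintained throughout this subsection, so the estimate applies without any restriction on the memory sequence. Specialized to $p = p_c = (\gamma+1/2)/(\gamma+1)$, one has $\gamma - p(\gamma+1) = -1/2$, so
\[
\mathbb{E} N_n^2 \sim 4\gamma^2 \, n.
\]

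First I would substitute $n$ by $\floor{n^t}$ and by $\floor{\exp((\log n)^t)}$ in the above asymptotic; both sequences tend to infinity with $n$ for every fixed $t>0$, so the equivalence transfers, giving
\[
\mathbb{E} N_{\floor{n^t}}^2 \sim 4\gamma^2 \, n^t, \qquad \mathbb{E} N_{\floor{\exp((\log n)^t)}}^2 \sim 4\gamma^2 \, \exp((\log n)^t).
\]
Dividing each by the square of the announced scale then yields
\[
\mathbb{E}\left(\frac{N_{\floor{n^t}}}{\sqrt{n^t \log n \log \log n}}\right)^2 \sim \frac{4\gamma^2}{\log n \log \log n} \to 0
\]
and
\[
\mathbb{E}\left(\frac{N_{\floor{\exp((\log n)^t)}}}{\sqrt{\exp((\log n)^t)(\log n)^t \log \log n}}\right)^2 \sim \frac{4\gamma^2}{(\log n)^t \log \log n} \to 0,
\]
and the lemma follows by Chebyshev's inequality.

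Since the heavy lifting has already been done in establishing~\eqref{eq: N sq mom}, there is no real obstacle here; the factors $\log n \log \log n$ and $(\log n)^t \log \log n$ built into the scalings are both more than sufficient to absorb the $n^t$ and $\exp((\log n)^t)$ emerging from $\mathbb{E} N_{\cdot}^2$. The role of this lemma is purely to justify dropping the $N_n$-contribution from the decomposition~\eqref{eq: S N split} along these non-linear time scales, so that the functional limit of the appropriately scaled RVSRRW process reduces to the martingale statement of Lemma~\ref{Martingale M_n general FCLT}.
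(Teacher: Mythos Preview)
Your proof is correct and follows exactly the approach the paper indicates: the paper says the lemma ``follows easily from~\eqref{eq: N sq mom},'' and you have simply written out that one-line $L^2$ computation in full, including the explicit constant $4\gamma^2$ arising from $\gamma - p_c(\gamma+1) = -1/2$. There is nothing to add.
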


Lemmas~\ref{Martingale M_n general FCLT} and~\ref{lem: N gen neg} with~\eqref{eq: S N split} give the finite dimensional convergence of the SRRW-RVM.
\begin{theorem} \label{thm: rescale fdd}
    Let $\{S_n\}_{n\geq 0}$ be the SRRW-RVM with zero mean, unit variance innovation sequence $\{\xi_n\}$, the memory sequence $\mu_n = n^\gamma (\log n)^{-(\gamma+1)}$ and the recollection probability $p=p_c$. Then, we have
    \[
    \l(\frac{S_{\floor{n^{t}}}}{\sqrt{n^{t}\log n\log\log n}} : t> 0 \r) \to_{\rm{fdd}} \l( (2\gamma+1) \sqrt{t} Z: t> 0 \r), \quad \text{and} \]
    \[
    \l(\frac{S_{\floor{\exp\l( (\log n)^t\r)}}}{\sqrt{\exp\l( (\log n)^t \r) (\log n)^t \log \log n}}  : t> 0 \r) \to_{\rm{fdd}} \l( (2\gamma+1) B(t): t> 0 \r).
    \]
\end{theorem}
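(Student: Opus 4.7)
The plan is to combine the decomposition~\eqref{eq: S N split} with the negligibility of the $N$-term established in Lemma~\ref{lem: N gen neg} and the process invariance principle for the martingale $M$ established in Lemma~\ref{Martingale M_n general FCLT}. Since $p=p_c>\widehat p$, Lemma~\ref{lem: rate} guarantees $\sum_n 1/(a_n \nu_n)=\infty$, so the relevant branch of the decomposition reads $S_n = N_n + p\eta_n M_n$. Both assertions of the theorem then reduce to inserting an appropriate time sampling into this identity, normalizing by the prescribed space scale, and applying Slutsky's theorem at each fixed collection of times.

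For the first assertion, I would sample both sides at the exponential time scale $\floor{n^t}$ and divide by $\sqrt{n^t \log n \log\log n}$. The first part of Lemma~\ref{lem: N gen neg} shows that $N_{\floor{n^t}}/\sqrt{n^t \log n \log\log n} \xrightarrow{\rm P} 0$ at each fixed $t>0$, while the first part of Lemma~\ref{Martingale M_n general FCLT} gives weak convergence of the scaled $M$-piece in $D((0,\infty))$ to the continuous limit $t \mapsto (2\gamma+1)\sqrt{t}\,Z$. Evaluating at any finite collection of times $0<t_1<\cdots<t_k$ and invoking continuity of the evaluation map $f\mapsto(f(t_1),\ldots,f(t_k))$ at paths continuous at each $t_i$ upgrades marginal convergence of the $M$-contribution to joint convergence. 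Slutsky's theorem then delivers the finite-dimensional convergence of $S_{\floor{n^t}}/\sqrt{n^t \log n \log\log n}$ to $(2\gamma+1)\sqrt{t}\,Z$.

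For the second assertion, the argument is structurally identical but uses the iterated exponential time scale $\floor{\exp((\log n)^t)}$ together with the time dependent space scale $\sqrt{\exp((\log n)^t)(\log n)^t \log\log n}$. The second statement in Lemma~\ref{lem: N gen neg} makes the $N$-term negligible, while the second statement in Lemma~\ref{Martingale M_n general FCLT} shows that the $M$-piece, appropriately rescaled, converges in $D((0,\infty))$ to $(2\gamma+1) B(t)$, whose paths are continuous. The same continuity-plus-Slutsky combination then yields the required finite-dimensional convergence to $(2\gamma+1) B(\cdot)$.

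The theorem is essentially a packaging result, and no genuinely new analytical estimate is required: the hard work has been absorbed into Example~\ref{ex: zeta zero} (rates for $v_n$, $a_n\mu_n$, $\eta_n$) and into the quadratic variation and Lindeberg checks of Lemmas~\ref{lem: QV of M_n for general time scale} and~\ref{lem: cond Lind for general time scale}. The only subtlety, which is entirely routine, is extracting joint convergence of the dominant $M$-contributions from process convergence in $D((0,\infty))$; this is automatic because the evaluation map is continuous under Skorohod topology at functions continuous at the sampled times, and both limiting processes have almost surely continuous paths on $(0,\infty)$. The conceptual content of the theorem, of course, is the contrast it highlights: the exponential scale $\floor{n^t}$ reproduces the random square-root limit already obtained under linear time in Theorem~\ref{Supercritical weak convergence}, so it is not the ``natural'' scale for a Brownian limit here; only the much heavier scale $\exp((\log n)^t)$ with its time-dependent space normalization succeeds in decoupling times into independent Gaussian increments.
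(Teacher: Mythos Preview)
Your proposal is correct and matches the paper's approach exactly: the paper's proof is a one-sentence combination of Lemmas~\ref{Martingale M_n general FCLT} and~\ref{lem: N gen neg} via the decomposition~\eqref{eq: S N split} and Slutsky's theorem, and you have spelled out precisely those steps with the additional (and appropriate) remark about continuity of the evaluation map at continuous limit paths.
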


\begin{remark}
    We do not get the process convergence in Theorem~\ref{thm: rescale fdd}, as we fail to show the negligibility in Lemma~\ref{lem: N gen neg} as a process in $D([0, \infty))$. However, a different limit under finite dimensional convergence with the exponential time scale $n^t$ shows the limit cannot be Brownian motion.
\end{remark}

This leads us to the following open problem:
\begin{open}
    Obtain process convergence in $D([0,\infty), \mathcal D)$ for Theorem~\ref{thm: rescale fdd}.
\end{open}

We next study Examples~\ref{ex: zeta power zero}~--~\ref{ex: log log zero} to determine the limits of the scaled process under the exponential time scale and to identify the space time scales, giving the Brownian motion limit. The arguments are similar to that in Example~\ref{ex: zeta zero} and are skipped.
{
\renewcommand{\thetheorem}{\ref{ex: zeta power zero}}
\addtocounter{theorem}{-1}
\begin{example}[Continued]
Under exponential time scaling, we get 
\[
\l(\frac{S_{\floor{n^t}}}{\sqrt{n^t\log n(\log \log n)^\rho}}:t>0\r) \to_{\rm{fdd}} \l((2\gamma+1)\sqrt{\frac{(\gamma+1)t}{\kappa(1-\rho)}}Z: t>0\r).
\]
For Brownian motion limit, consider
$$\tau_n(t) = \l\lfloor \exp \l( \exp \l( \l( \frac{\gamma +1}{\kappa} \log t + (\log\log n)^{1-\rho}\r)^{{1}/(1-\rho)} \r) \r)  \r\rfloor,$$ giving
\[
\l(\sqrt{\frac{t}{\tau_n(t)\log \tau_n(t)(\log \log n)^\rho}} S_{\tau_n(t)}:t>0\r) \to_{\rm{fdd}}\l((2\gamma+1)\sqrt{\frac{\gamma+1}{\kappa(1-\rho)}}B(t): t>0\r).
\]
\end{example}
}

{
\renewcommand{\thetheorem}{\ref{ex: log log zero}}
\addtocounter{theorem}{-1}
\begin{example}[Continued]
Under the exponential time scaling, we have
\[\l(\frac{S_{\floor{n^t}}}{\sqrt{n^t\log n\log\log n}}: t>0\r)\to_{\rm{fdd}}\l((2\gamma+1) \sqrt{\frac{(\gamma+1)t}{\kappa+\gamma+1}}Z : t>0\r).
\]
For Brownian motion limit, we need the time scale $\tau_n(t) = \l\lfloor\exp \l( \exp \l( t^{(\gamma+1)/(\kappa+\gamma+1)} \log \log n \r) \r) \r\rfloor$.
Then, we have
\[
\l(\sqrt{ \frac{t^{{\kappa}/{(\kappa+\gamma+1)}}}{\tau_n(t) \log \tau_n(t) \log\log n}} S_{\tau_n(t)}: t>0 \r) \to_{\rm{fdd}} \l((2\gamma+1)\sqrt{\frac{\gamma+1}{\kappa+\gamma+1}}B(t): t>0\r).
\]
\end{example}
}

{
\renewcommand{\thetheorem}{\ref{ex: log log log zero}}
\addtocounter{theorem}{-1}
\begin{example}[Continued]
Under the exponential time scale, we get
\[
\l(\frac{S_{\floor{n^{t}}}}{\sqrt{n^{t}\log n\log\log n\log\log\log n}} : t> 0 \r)\to_{\rm{fdd}}\l((2\gamma+1)\sqrt{t}Z: t>0\r).
\]
For Brownian motion limit, consider the time scale $\tau_n(t) = \floor{\exp(\exp((\log\log n)^t))}$. Then, we have \[
\l(\frac{S_{\tau_n(t)}}{\sqrt{\tau_n(t) \log \tau_n(t) ( \log \log n)^t \log\log\log n}} : t> 0 \r) \to_{\rm{fdd}} ((2\gamma+1)B(t): t>0).
\]
\end{example}
}

Next consider Corollary~\ref{corr: nlogn scale} with the exponential time scale and a power function multiple of $\sqrt{n^t \log n}$ as the space scale to get the f.d.d.\ limit as a power function time change of Brownian motion and the multiple depending on $\alpha$ too. We also provide the time scale for the Brownian motion limit. The proof is similar and is skipped. Surely, the following Corollary applies to Examples~\ref{ex: zeta alpha zero pos}~-~\ref{ex: log log pos}.
\begin{corollary}[Continued from Corollary~\ref{corr: nlogn scale}]
    \label{cor: nlogn scale nonlinear}
    Let $\{S_n\}_{n\geq 0}$ be the SRRW-RVM with zero mean, unit variance innovation sequence $\{\xi_n\}_{n\ge 1}$. Then for $p=p_c$ and $\{\mu_n\}$ satisfying Assumption \ref{eg 1} with $\alpha+\gamma+1>0$, we have,
    \[
    \l( \sqrt{\frac{t^{{\alpha}/{(\gamma+1)}}}{ n^t \log n}} S_{\floor{n^t}}: t>0 \r) \to_{\rm{fdd}} \l( (2\gamma+1) \sqrt{\frac{\gamma+1}{\alpha+\gamma+1}} B\l( t^{\frac{\alpha+\gamma+1}{\gamma+1}} \r) : t>0 \r).
    \]
    For the Brownian motion limit, consider the timescale $\tau_n(t) = \l\lfloor \exp \l( t^{\frac{\gamma+1}{\alpha+\gamma+1}} \log n\r) \r\rfloor$ to get
    \[
    \l( \sqrt{\frac{t^{{\alpha}/{(\alpha+\gamma+1)}}}{ \tau_n(t) \log n}} S_{\tau_n(t)}: t>0 \r) \to_{\rm{fdd}} \l( (2\gamma+1) \sqrt{\frac{\gamma+1}{\alpha+\gamma+1}} B(t) : t>0 \r).
    \]
\end{corollary}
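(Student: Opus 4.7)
The plan is to follow the same template used to analyze Examples~\ref{ex: zeta zero}, \ref{ex: zeta power zero}, \ref{ex: log log zero}, and~\ref{ex: log log log zero}. Using the decomposition~\eqref{eq: S N split}, write $S_n = N_n + p\eta_n M_n$, and handle the two contributions separately. The martingale term $p\eta_n M_n$ will produce the Gaussian fluctuations via a martingale CLT along the chosen time scales, while the $N_n$ term will be shown to be negligible.

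First, I would sharpen the asymptotics for $v_n$ obtained inside the proof of Theorem~\ref{rates l_n eg 1}: namely $v_n^2 \sim C_\mu \frac{\gamma+1}{\alpha+\gamma+1} \log n\cdot f(\log n)^{1/(\gamma+1)}$, where $f$ is regularly varying of index $\alpha$. Using the fact that $f(t\log n)/f(\log n)\to t^{\alpha}$, this immediately gives
\[
\frac{v_{\lfloor n^t\rfloor}^2}{v_n^2}\;\longrightarrow\;t\cdot t^{\alpha/(\gamma+1)}\;=\;t^{(\alpha+\gamma+1)/(\gamma+1)}.
\]
For the alternate time scale $\tau_n(t)=\lfloor\exp(t^{(\gamma+1)/(\alpha+\gamma+1)}\log n)\rfloor$, direct substitution (since $\log\tau_n(t)\sim t^{(\gamma+1)/(\alpha+\gamma+1)}\log n$ and $f(\log\tau_n(t))^{1/(\gamma+1)}\sim t^{\alpha/(\alpha+\gamma+1)} f(\log n)^{1/(\gamma+1)}$) yields ${v_{\tau_n(t)}^2}/{v_n^2}\to t$, which is the calibration motivating the choice of $\tau_n(t)$.

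Second, I would establish the analogs of Lemmas~\ref{lem: QV of M_n for general time scale} and~\ref{lem: cond Lind for general time scale} for the triangular martingale difference array $\Delta\widehat{M}_{n,k}=\Delta M_k/v_n$ sampled along the scales $\lfloor n^t\rfloor$ and $\tau_n(t)$. The quadratic variation, by the argument of Lemma~\ref{prop: QV proc in the critical regime} combined with the ratio limits above, converges in probability to $t^{(\alpha+\gamma+1)/(\gamma+1)}$ and $t$ respectively; the Lindeberg condition is verified exactly as in Lemma~\ref{lemma: cond lind cond in critical case} using that $v_{\lfloor n^t\rfloor}/v_n$ and $v_{\tau_n(t)}/v_n$ are bounded below on any compact $t$-interval bounded away from $0$. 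Theorem~2.5 of~\cite{Durrett1978} then produces
\[
\frac{M_{\lfloor n^t\rfloor}}{v_n}\;\xrightarrow{\rm{fdd}}\;B\bigl(t^{(\alpha+\gamma+1)/(\gamma+1)}\bigr),\qquad \frac{M_{\tau_n(t)}}{v_n}\;\xrightarrow{\rm{fdd}}\;B(t),
\]
for a standard Brownian motion $B$.

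Third, I would prove the analog of Lemma~\ref{lem: N gen neg}: from~\eqref{eq: N sq mom} we have $\mathbb{E} N_m^2 = O(m)$, so
\[
\frac{\mathbb{E} N_{\lfloor n^t\rfloor}^2}{t^{-\alpha/(\gamma+1)}\,n^t\log n}\;=\;O\!\left(\frac{t^{\alpha/(\gamma+1)}}{\log n}\right)\;\to\;0,
\]
and analogously along $\tau_n(t)$, so both contributions from $N$ are $L^2$-negligible.

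Finally, to combine these via Slutsky's theorem I use that for $p=p_c$, Lemma~\ref{lem: rate} and Theorem~\ref{rates l_n eg 1} give $p\eta_n\sim (2\gamma+1)/(a_n\mu_n)$ and $(a_n\mu_n)^{-2}\sim C_\mu^{-1} n\,f(\log n)^{-1/(\gamma+1)}$. Along $n^t$, $(a_{\lfloor n^t\rfloor}\mu_{\lfloor n^t\rfloor})^{-2}\sim C_\mu^{-1} n^t\,t^{-\alpha/(\gamma+1)} f(\log n)^{-1/(\gamma+1)}$, hence
\[
\frac{p\eta_{\lfloor n^t\rfloor}\,M_{\lfloor n^t\rfloor}}{\sqrt{t^{-\alpha/(\gamma+1)} n^t\log n}}\;=\;\frac{(2\gamma+1)+o(1)}{\sqrt{t^{-\alpha/(\gamma+1)} n^t\log n}\,a_{\lfloor n^t\rfloor}\mu_{\lfloor n^t\rfloor}}\,v_n\cdot \frac{M_{\lfloor n^t\rfloor}}{v_n}
\]
and the deterministic prefactor converges to $(2\gamma+1)\sqrt{(\gamma+1)/(\alpha+\gamma+1)}$. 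Combined with the martingale CLT from step two and the negligibility of $N_{\lfloor n^t\rfloor}$, this gives the first convergence. The calibration $v_{\tau_n(t)}^2/v_n^2\to t$ together with $\sigma_{\tau_n(t)}/\sqrt{t^{-\alpha/(\alpha+\gamma+1)}\tau_n(t)\log n}\to \sqrt{(\gamma+1)/(\alpha+\gamma+1)}$ (a routine computation) yields the second convergence by the same argument.

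The main obstacle, as elsewhere in Section~\ref{subsec: non lin time scale}, is the bookkeeping: correctly tracking the interplay between $a_n\mu_n$ (regularly varying of index $-1/2$ only up to a slowly varying factor involving $f$) and $v_n$ along the two time scales, and verifying the compatibility of the Gaussian variance of the limit with the chosen normalization. Once the calibration $v_{\tau_n(t)}^2/v_n^2\to t$ and the power-law ratio for $n^t$ are in hand, the rest is direct adaptation of Lemmas~\ref{lem: QV of M_n for general time scale}--\ref{lem: N gen neg}.
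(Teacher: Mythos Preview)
Your proposal is correct and follows exactly the template the paper uses (and explicitly invokes) for the analogous Examples~\ref{ex: zeta zero}--\ref{ex: log log log zero}: compute the ratios $v_{\lfloor n^t\rfloor}^2/v_n^2$ and $v_{\tau_n(t)}^2/v_n^2$ from the asymptotics in Theorem~\ref{rates l_n eg 1}, feed these into the quadratic variation and Lindeberg arguments of Lemmas~\ref{prop: QV proc in the critical regime} and~\ref{lemma: cond lind cond in critical case} along the chosen time scales, apply Theorem~2.5 of~\cite{Durrett1978}, and combine with the $L^2$-negligibility of $N$ from~\eqref{eq: N sq mom} via Slutsky. The paper itself states the proof ``is similar to the earlier cases and is skipped,'' and your outline reproduces precisely that argument.
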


We conclude by considering Example~\ref{ex: lighter than n log n} in nonlinear time scales.
{
\renewcommand{\thetheorem}{\ref{ex: lighter than n log n}}
\addtocounter{theorem}{-1}
\begin{example}[Continued]
    Using Theorem~ \ref{rates l_n eg 1},we have, as $n\to\infty$,
    \[
    \frac{v_{\floor{n^t}}^2}{v_n^2} \sim t^\rho \exp \l( \frac1{\gamma+1} (\log n)^{1-\rho} (t^{1-\rho}-1) \r) \sim 
    \begin{cases}
        0, &\text{for $t<1$,}\\
        1, &\text{for $t=1$,}\\
        \infty, &\text{for $t>1$.}
    \end{cases}
    \]
    Thus, for the martingale difference sequence $\{\Delta \widecap{M}_{n,k}\}$, no nontrivial limit is possible in Proposition~\ref{lemma: time deformed M_n FCLT} with the exponential time scale $\floor{n^t}$. This establishes the limitation of the exponential timescale.

    However, for the time scale
    $\tau_n(t) = \l\lfloor \exp \l( \l( (\gamma+1) \log t + (\log n)^{1-\rho} \r)^{{1}/(1-\rho)} \r) \r\rfloor$, we get the Brownian motion limit:  
    \[
    \l(\sqrt{\frac{t}{\tau_n(t) (\log n)^\rho} } S_{\tau_n(t)}: t>0\r) \to_{\rm{fdd}} \l( (2\gamma+1) \sqrt{\frac{\gamma+1}{1-\rho} } B(t) : t>0\r).
    \]
\end{example}
}
\begin{remark} \label{rem: concl}
Different examples in this section shows the analysis of the SRRW-RVM in the critical regime, to be more diverse and distinct from the results available in the literature. Traditionally, for SRRW (in~\cite{Bertenghi2022}) or the "smooth amnesia" model (in~\cite{Laulin2022}), the process with time-dependent space scaling $\sqrt{n^t \log n}$, viewed at the exponential time scale $\floor{n^t}$, leading to a Brownian motion limit, has been considered. Similarly, as noted in Remark~\ref{rem: BM subcrit}, in the subcritical regime, SRRW, again with time-dependent space scaling $\sqrt{n} t^{p/(1-2p)}$ seen at power function time scale $n t^{1/(1-2p)}$, converges to a multiple of standard Brownian motion as a process. However, in the subcritical regime, it is customary (see, for example,~\cite{Bertenghi2022}) to consider the SRRW with time-independent space scaling $\sqrt{n}$ seen at linear time scale $\floor{nt}$. The same is true for SRRW-RVM and in Theorem~\ref{Invariance principle}, a centered Gaussian process is obtained as a process limit. Further, Proposition~\ref{prop: tightness in subcrit} proves the continuity of the limiting process in $p\in[0,p_c)$ under the topology of weak convergence. 

In Theorem~\ref{Supercritical weak convergence}, the critical SRRW-RVM with unbounded $\{v_n\}$ for $p=p_c$, converges weakly to a process whose paths are a Gaussian multiple of the square root function, under the linear time scale and time-independent space scale in $D([0,\infty))$. In Proposition~\ref{prop: tightness for G at crit}, we show that, whenever $\{v_n\}$ is unbounded for $p=p_c$, provided appropriate time-independent space scaling $\sigma_n$ is used in both subcritical and critical regimes, the limit process is continuous in $p\in[0,p_c]$ under the topology of weak convergence on $D([0,\infty))$. Almost sure as well as $L^2$ convergence of the scaled process under the critical regime are considered in Theorem~\ref{Superdiffusive process convergence}, which is novel in the literature.

In this section, we gave examples of the memory sequence $\{\mu_n\}$ giving almost sure and in $L^2$ limit under the critical regime. For almost sure and in $L^2$ limits, the scales must be heavier than $\sqrt{n \log n}$; see Remark~\ref{rem: crit ae wt}. However, for the weak limits under the critical regime, we gave examples with scalings larger, as well as smaller than $\sqrt{n \log n}$.

Under the exponential time scaling, SRRW-RVM, even with time-dependent space scale, may not have the Brownian motion limit. There can still be appropriate nonlinear time scale leading to a Brownian motion limit under time dependent space. However such time scale are too complicated to understand the process better. Hence, we suggest that, whenever $\{v_n\}$ is unbounded at $p=p_c$, the process should be viewed in linear time scale with time-independent $\sigma_n$ as the space scale, instead of the traditional exponential time scale and time-dependent space scale.
\end{remark}

\section{Proof of Main Results for SRRW-RVM} \label{sec: SRRW}
We now prove the main results given in Section~\ref{Oerview}, for general innovations $\{\xi_n\}$, with the appropriate assumptions. We only indicate the necessary modifications required in the proof of the results in Section~\ref{ERW}. 
Note that Theorems~\ref{a.s & L^2 convergence} and~\ref{Superdiffusive process convergence} have already been proved for the general SRRW-RVM in Section~\ref{subsec: a.s. L^2 ERW}.
\subsection{Law of large numbers for SRRW-RVM}
 As noted in Remark~\ref{rem: as ERW}, Proposition~\ref{prop: SLLN} is proved only under the finite variance assumption, used in Lemma~\ref{lemma: trunc mart conv ERW}. So it will be enough to prove Lemma~\ref{lemma: trunc mart conv ERW} for zero mean innovations only for almost sure and in $L^1$ convergences.

To handle general zero mean innovations, we consider the centered and truncated step sizes given by
$\widetilde X_n := X_n \mathbbm{1}_{[|X_n|\le n]} - \mathbb{E} ( X_n \mathbbm{1}_{[|X_n|\le n]} | \mathcal F_{n-1} )$. Then the corresponding truncated versions of the martingales $M_n$ and $L_n$ are given by
$\widetilde M_n := \sum_{k=1}^n a_k \mu_k \widetilde X_k$ and
$\widetilde L_n := \sum_{k=1}^n \widetilde X_k$.

The proof of the following lemma is a careful restatement of Theorem~2.19 of~\cite{HallHeyde} and uses the fact that the sequence $\{n a_n \mu_n\}$ is regularly varying of index $(1-p)(\gamma+1)$ and is divergent. The proof is skipped.
\begin{lemma}{\label{lemma: trunc mart conv}}
For zero mean innovation sequence $\{\xi_n\}$, we have
${\widetilde M_n}/{(a_n \nu_n)} \to 0$ {and} ${\widetilde L_n}/{n} \to 0$ {in $L^2$ and a.s.}
\end{lemma}

The tail errors are summable almost surely due to Borel-Cantelli lemma, as only finitely many terms contribute.
\begin{lemma}{\label{lemma: tail BC}}
For zero mean innovation sequence $\{\xi_n\}$, we have, with probability~$1$,
$$\sum_{n=1}^\infty |X_n| \mathbbm{1}_{[|X_n|>n]} < \infty \qquad \text{and} \qquad \sum_{n=1}^\infty a_n \mu_n |X_n| \mathbbm{1}_{[|X_n|>n]} < \infty.$$
\end{lemma}

The tail conditional expectation is also Cesaro negligible almost surely, under finite first moment assumption.
\begin{lemma} \label{lem: tail cond exp}
    For zero mean innovation sequence $\{\xi_n\}$, we have, with probability $1$, 
    $$\sum_{k=1}^n \mathbb{E} ( |X_k| \mathbbm{1}_{[|X_k|>k]} | \mathcal{F}_{k-1} ) /n \to 0 \qquad \text{and} \qquad \sum_{k=1}^n a_k \mu_k \mathbb{E} ( |X_k| \mathbbm{1}_{[|X_k|>k]} | \mathcal{F}_{k-1} ) / ({a_n\nu_n}) \to 0.$$
\end{lemma}
\begin{proof}
Note that
$\mathbb{E}(|X_n| \mathbbm{1}_{[|X_n|>n]} | \mathcal{F}_{n-1}) = (1-p) \mathbb{E} (|\xi_1| \mathbbm{1}_{[|\xi_1|>n]}) + p \nu_{n-1}^{-1} \sum_{l=1}^{n-1} \mu_l |X_l| \mathbbm{1}_{[|X_l|>n]}$.
The first term goes to $0$ as $\mathbb{E}(|\xi_1|) < \infty$, while the second term is bounded by
$p \nu_{n-1}^{-1} \sum_{l=1}^{n-1} \mu_l |X_l| \mathbbm{1}_{[|X_l|>l]}$, where only finitely many terms contribute by Borel-Cantelli lemma. Then, the result follows by Cesaro averaging and Toeplitz lemma.
\end{proof}

The next lemma gives Cesaro negligibility in $L^1$ for the tail errors, as $\mathbb{E} ( |X_n| \mathbbm{1}_{[|X_n|>n]}) = \mathbb{E} ( |\xi_1| \mathbbm{1}_{[|\xi_1|>n]} ) \to 0$.
\begin{lemma} \label{lem: tail L1}
    For zero mean innovation sequence $\{\xi_n\}$, the following hold:  
    $$\sum_{k=1}^n \mathbb{E} ( |X_k| \mathbbm{1}_{[|X_k|>k]} ) / n \to 0 \qquad \text{and} \qquad \sum_{k=1}^n a_k \mu_k \mathbb{E} ( |X_k| \mathbbm{1}_{[|X_k|>k]} ) / ({a_n \nu_n}) \to 0.$$
\end{lemma}

We now prove the analog of Lemma~\ref{lemma: trunc mart conv ERW} and hence, Proposition~\ref{prop: SLLN} for any zero mean innovation sequence.
\begin{lemma}
    \label{lem: conv L}
    For zero mean innovation sequence $\{\xi_n\}$, $L_n/n \to 0$ and $M_n/(a_n \nu_n)$ almost surely, as well as in $L^1$.
\end{lemma}
\begin{proof}
The results for $L_n$ hold by applying Lemmas~\ref{lemma: trunc mart conv}~--~\ref{lem: tail L1} appropriately to the terms of the decomposition
$\frac1n L_n = \frac1n \widetilde L_n + \frac1n \sum_{k=1}^n X_k \mathbbm{1}_{[|X_k|>k]} - \frac1n \sum_{k=1}^n \mathbb{E} ( X_k \mathbbm{1}_{[|X_k|>k]} | \mathcal{F}_{k-1})$.
The results for $M_n$ are similar and are skipped.
\end{proof}

Due to Remark~\ref{rem: as ERW}, we need to consider Theorem~\ref{thm: SLLN} for $L^1$ convergence only.
\begin{proof}[Proof of Theorem~\ref{thm: SLLN} ($L^1$ convergence for SRRW-RVM)]
For $L^1$ convergence, we note that, using Proposition~2.3 of~\cite{Janson1994}, it is enough to show for any $T>0$, 
$S^*_{\lfloor nT \rfloor} / n
    \to_{L^1} 0$.
From~\eqref{conn L S} and truncation of the martingale $L_n$, we have
\begin{align*}
    \frac1n \mathbb{E} \left( S^*_{\lfloor nT \rfloor} \right)
    &\le \frac1n \mathbb{E} \left( \sup_{1\le l\le \lfloor nT \rfloor} \left| \widetilde L_l \right| \right) 
    + \frac2n \sum_{k=1}^{\lfloor nT \rfloor} \mathbb{E} \left( |X_k| \mathbbm{1}_{[|X_k|>k]} \right)
    + \frac1n \sum_{k=1}^{\lfloor nT \rfloor-1} \frac{\mathbb{E}(|M_k|)}{a_k \nu_k}\\
    &\le 2 \sqrt{\frac1{n^2} \mathbb{E} \widetilde L_{\lfloor nT \rfloor}^2}
    + \frac2n \sum_{k=1}^{\lfloor nT \rfloor} \mathbb{E} \left( |X_k| \mathbbm{1}_{[|X_k|>k]} \right)
    + \frac1n \sum_{k=1}^{\lfloor nT \rfloor-1} \frac{\mathbb{E}(|M_k|)}{a_k \nu_k},
\end{align*}
using Cauchy-Schwarz and Doob's $L^2$ inequalities. Then, Lemmas~\ref{lemma: trunc mart conv},~\ref{lem: tail L1} and~\ref{lem: conv L} make three terms negligible.
\end{proof}
\subsection{Subcritical Weak Convergence}
As noted in Remark~\ref{rem: subcrit ERW}, for Theorem~\ref{Invariance principle}, we prove Lemmas~\ref{lem: QV} and~\ref{lem: Lindeberg ERW} for any zero mean, unit variance innovation sequence.
The following lemma will be useful towards that end.
\begin{lemma}{\label{lem: sq cond exp}}
     Let $\{S_n\}_{n\geq 0}$ be the SRRW-RVM with zero mean, unit variance innovation sequence $\{\xi_n\}$. Then $U_n := \sum_{k=1}^n \mu_k X_k^2 \sim \nu_n$ and $\mathbb{E} X_n^2 | \mathcal F_{n-1} \to 1$ almost surely and in $L^1$.
\end{lemma}
\begin{proof}
    Consider an SRRW-RVM with innovation sequence $\{\xi_n^2-1\}$ having finite first moment. Using induction, the steps are given by $\{X_n^2 - 1\}$ and the martingale sequence in~\eqref{eq: def M} by
    $a_n \sum_{k=1}^n \mu_k (X_k^2 -1) = a_n U_n - a_n \nu_n$.
    Lemma~\ref{lem: conv L} gives $U_n/\nu_n \to 1$ almost surely and in $L^1$. The result then follows by observing $\mathbb{E} X_{n+1}^2 | \mathcal{F}_{n} = p U_n / \nu_n + (1-p)$.
\end{proof}

 We now easily obtain the limiting quadratic variation process for any zero mean, unit variance innovation sequence.
\begin{proof}[Proof of Lemma~\ref{lem: QV} (for SRRW-RVM)]
As noted in Remark~\ref{rem: QV sub ERW}, only the limits of the first terms of~\eqref{eq: qv omT}~--~\eqref{eq: qv tmT} are obtained for general zero mean, unit variance innovation sequence, and through Lemma~\ref{lem: sq cond exp} and Toeplitz lemma.
\end{proof}

To check the Lindeberg conditions for the finite second moment innovations, we require the following lemma.
\begin{lemma} \label{lem: key Lind}
    Let $\{S_n\}_{n\geq 0}$ be the SRRW-RVM with zero mean, unit variance innovation sequence $\{\xi_n\}$. Then, for any sequence $\{\lambda_n\}$ increasing to $\infty$ and $\epsilon>0$, we have
    $\mathbb{E} ( (\Delta L_n)^2 \mathbbm{1}_{\{ |\Delta L_n| > \epsilon \lambda_n \}} \mid \mathcal{F}_{n-1} ) \to_{L^1} 0$.
\end{lemma}
\begin{proof}
    From~\eqref{Conditional Expectation} and~\eqref{conn L S}, we have
    $\Delta L_n = X_n - p {Y_{n-1}}/{\nu_{n-1}}$. Hence
\begin{equation*}
    (\Delta L_n)^2 \mathbbm{1}_{\{ |\Delta L_n| > \epsilon \lambda_n \}} \le 2X_n^2 \mathbbm{1}_{\left\{ |\Delta L_n| > \epsilon \lambda_n \right\}} + 2\frac{Y_{n-1}^2}{\nu_{n-1}^2}  \le 2X_n^2 \mathbbm{1}_{\left\{ |X_n| > \frac{\epsilon}2 \lambda_n \right\}} + 2X_n^2 \mathbbm{1}_{\left\{ \l|\frac{Y_{n-1}}{\nu_{n-1}}\r| > \frac{\epsilon}2 \lambda_n \right\}} + 2\frac{Y_{n-1}^2}{\nu_{n-1}^2},
\end{equation*}
giving
    \begin{multline*}
    \mathbb{E} \left( (\Delta L_n)^2 \mathbbm{1}_{\{ |\Delta L_n| > \epsilon \lambda_n \}} \right) 
    \le  2\mathbb{E} \left( \xi_1^2 \mathbbm{1}_{\l\{ |\xi_1| >\frac{\epsilon}2 \lambda_n\r\} }\right) + 2\mathbb{E} \left(  |\mathbb{E}X_n^2 | \mathcal{F}_{n-1} -1 |\right) \\
    + 2 \mathbb{P} \left( \l| \frac{Y_{n-1}}{\nu_{n-1} } \r|  > \frac{\epsilon}2 \lambda_n \right) + 2\mathbb{E} \left( \frac{Y_{n-1}^2}{\nu_{n-1}^2} \right).
    \end{multline*}
Finite variance of $\xi_1$ and Lemma~\ref{lem: sq cond exp} respectively make the first two terms negligible, while the last two are so due to Lemma~\ref{lem: conv L}.
\end{proof}

We are now ready to obtain the Lindeberg conditions for the innovations with finite second moments.
 \begin{proof}[Proof of Lemma~\ref{lem: Lindeberg ERW} (for SRRW-RVM)]
Fix an integer $t>0$. From Lemma~\ref{lem: Linden bd ERW}, choose $\widetilde C_t>0$ and further choose $0<\rho < \gamma+1/2 - p(\gamma+1)$, when $p\in(\hp, p_c)$. We also use $\|\Delta\overline{\boldsymbol{T}}_{n,k}\|^2 = \tr(\omQ_{n,k}) (\Delta L_k)^2$ for $p\in [0, \hp)$,  
$\|\Delta{\boldsymbol{T}}_{n,k}\|^2 = \tr(\mQ_{n,k}) (\Delta L_k)^2$ for $p\in (\hp, p_c)$ and
$\|\Delta\widetilde{T}_{n,k}\|^2 = \tr(\tmQ_{n,k}) (\Delta L_k)^2$ for $p = \hp$. 
Then, we have
    \begin{align*}
    \intertext{for $p\in [0,\widehat{p})$,}
    \sum_{k=1}^{\lfloor nt\rfloor}\mathbb{E}\left(\|\Delta\overline{\boldsymbol{T}}_{n,k}\|^2\mathbbm{1}_{\{\|\Delta\overline{\boldsymbol{T}}_{n,k}\|>\epsilon\}}\middle|\mathcal{F}_{k-1}\right) 
    &\le  
    \sum_{k=1}^{\floor{nt}} \tr(\omQ_{n,k}) \bb E\l((\Delta L_k)^2 \mathbbm{1}_{\{|\Delta L_k|>\epsilon \sqrt{k}/\widetilde C_t\}} \middle| \mathcal{F}_{k-1} \r); \\
    \intertext{for $p\in (\hp, p_c)$,}
    \sum_{k=1}^{\floor{nt}} \bb E\l(\|\Delta \mT_{n,k}\|^2 \mathbbm{1}_{\{\|\Delta \mT_{n,k} \| >\epsilon \}} \middle| \mathcal F_{k-1}\r) 
    &\le  
    \sum_{k=1}^{\floor{nt} } \bb \tr(\mQ_{n,k}) E\l((\Delta L_k)^2 \mathbbm{1}_{ \{ |\Delta L_k| > \epsilon k^\rho/\widetilde C_t\} }\middle| \mathcal F_{k-1}\r), \\
    \intertext{and for $p=\hp$,}
    \sum_{k=1}^{\floor{nt}} \bb E\l(\|\Delta \tmT_{n,k}\|^2 \mathbbm{1}_{\{\|\Delta \tmT_{n,k} \| >\epsilon \}} \middle| \mathcal F_{k-1}\r) 
    &\le  
    \sum_{k=1}^{\floor{nt} } \tr(\tmQ_{n,k}) \bb E\l((\Delta L_k)^2 \mathbbm{1}_{ \{ |\Delta L_k| > \epsilon k^{1/4}/\widetilde C_t\} }\middle| \mathcal F_{k-1}\r).
    \end{align*}

    As in the proof of Lemma~\ref{lem: QV} for SRRW-R-RVM on p.~\pageref{eq: qv omT}, we have $\sum_{k=1}^{\floor{nt}}\tr(\omQ_{n,k})\to \omW(t)$ for $p\in [0, \hp)$, and, for each fixed $k$, $\omQ_{n,k}$ is negligible. Similar results hold for $p \in (\hp, p_c)$ and $p=\hp$. We conclude using Lemma~\ref{lem: key Lind}.
\end{proof}
 \subsection{Weak limit in the critical regime with unbounded \texorpdfstring{$\{v_n\}$}{vn} for SRRW-RVM}
In the proof of Theorem~\ref{Supercritical weak convergence} given in Section~\ref{subsec: critical weak limit}, the Rademacher distribution of the innovation sequences is used only in Lemmas~\ref{prop: QV proc in the critical regime} and~\ref{lemma: cond lind cond in critical case}. We prove those Lemmas in the general setup.

\begin{proof}[Proof of Lemmas~\ref{prop: QV proc in the critical regime} and~\ref{lemma: cond lind cond in critical case} for SRRW-RVM]
Recall from Remark~\ref{rem: qv crit ERW}, as required for Lemma~\ref{prop: QV proc in the critical regime}, Lemma~\ref{lem: sq cond exp} and Toeplitz lemma show the limit of the first term of~\eqref{eq: qv cap M} as $1$.

Lemma~\ref{lemma: cond lind cond in critical case} follows by applying Lemma~\ref{lem: key Lind} and Toeplitz lemma to~\eqref{eq: Lind cap M}.   
\end{proof}

\appendix

\section{}
In this appendix, we provide proofs of a couple of results about regularly varying sequences, which are not immediate. The first lemma considers the growth rate of certain sums.
\begin{lemma} \label{lem: rate sum}
For $p = \widehat{p}$, we have
\[\sum_{k=1}^n a_k \mu_k (\eta_n - {\eta}_k) \sim (\gamma+1) n, \quad
        \sum_{k=1}^n a_k^2 \mu_k^2 (\eta_n - {\eta}_k) \sim (\gamma+1)^2 a_n \nu_n, \quad \quad
        \sum_{k=1}^n a_k^2 \mu_k^2 (\eta_n - {\eta}_k)^2 \sim 2 (\gamma+1)^2 n.\] 
\end{lemma}
\begin{proof}
    In the case $p=\hp$, we rewrite the factor $(\eta_n-\eta_k)$ as $\sum_{j=k}^{n-1} 1/{(a_j \nu_j)}$ and interchange the order of the summation, followed by applying Karamata's theorem twice in succession. The first two results then follow easily. We provide the details of the third result for illustration. In this case, we need extra care to handle the square of $(\eta_n-\eta_k)$. Note that
\begin{equation}
    \sum_{k=1}^n a_k^2 \mu_k^2 (\eta_n-\eta_k)^2 = \sum_{k=1}^{n-1} a_k^2 \mu_k^2 \sum_{j=k}^{n-1} \frac{1}{a_j^2\nu_j^2} + 2 \sum_{k=1}^{n-1} \sum_{i=k}^{n-1}\sum_{j=i+1}^{n-1} \frac{a_k^2\mu_k^2}{a_i\nu_ia_j\nu_j}
    = \sum_{j=1}^{n-1} \frac{1}{a_j^2\nu_j^2} v_j^2 +  2 \sum_{j=2}^{n-1} \sum_{i=1}^{j-1} \frac{v_i^2}{a_i\nu_ia_j\nu_j}. \label{eq: diff sq}
\end{equation}
We get the required result as  the first term of~\eqref{eq: diff sq} satisfies
$\sum_{j=1}^{n-1} v_j^2 / (a_j^2\nu_j^2) \sim (\gamma+1)^2 \sum_{j=1}^{n-1} j^{-1} \sim (\gamma+1)^2 \log n = \oh(n)$ using Karamata's theorem, while the second term of~\eqref{eq: diff sq} satisfies
\[
   2 \sum_{j=2}^{n-1} \sum_{i=1}^{j-1} \frac{v_i^2}{a_i\nu_ia_j\nu_j} \sim 2 (\gamma+1) \sum_{j=2}^{n-1} \sum_{i=1}^{j-1} \frac{a_i\mu_i}{a_j\nu_j} \sim 2 (\gamma+1)^2 \sum_{j=2}^{n-1} 1 \sim 2 (\gamma+1)^2 n,
\]
using Karamata's theorem twice in succession.
\end{proof}

The next result uses the uniform convergence of regularly varying functions.
\begin{lemma}
    \label{lem: tricky conv}
    For $p=\hp= {\gamma}/({\gamma+1})$, and for all $0<s<t<\infty$, we have
    $$\l(\eta_{\floor{nt}}-\eta_{\floor{ns}}\r) a_{\floor{ns}} \mu_{\floor{ns}} \to (\gamma+1) \log ({t}/{s}).$$
\end{lemma}
\begin{proof}
    For $p=\hp$, $\{a_n \mu_n\}$ is slowly varying. Then, by Proposition~0.5 of~\cite{Resnick1987}, for any $0<s<t<\infty$, we have
    \[
    \sup_{\floor{ns} \le k \le \floor{nt}} \left| \frac{a_{\floor{ns}} \mu_{\floor{ns}}}{a_{k} \mu_{k}} - 1\right| = \sup_{u\in[s,t]} \left| \frac{a_{\floor{ns}} \mu_{\floor{ns}}}{a_{\floor{nu}} \mu_{\floor{nu}}} - 1\right| \to 0.
    \]
    Using this uniform convergence, we get
    $(\eta_{\floor{nt}}-\eta_{\floor{ns}}) a_{\floor{ns}} \mu_{\floor{ns}} = \sum_{k=\floor{ns}}^{\floor{nt}-1} (a_{\floor{ns}} \mu_{\floor{ns}})  / (a_k \nu_k) \sim (\gamma+1) \sum_{k=\floor{ns}}^{\floor{nt}-1} 1/k \sim (\gamma+1) \log ({t}/{s})$.
\end{proof}

\medskip

\end{document}